\documentclass[11pt]{amsart}
\usepackage[a4paper,margin=26mm]{geometry}
\usepackage[T1]{fontenc}
\usepackage{lmodern,mathtools,amssymb,booktabs,microtype}
\usepackage{xcolor}
\usepackage[colorlinks=true,linkcolor=blue!50!black,citecolor=blue!50!black,urlcolor=blue!50!black]{hyperref}
\hypersetup{pdftitle={Contact and radial poles of dilated Euler-product sections},pdfauthor={K. Srinivasa Raghava}}
\newtheorem{theorem}{Theorem}[section]
\newtheorem{proposition}[theorem]{Proposition}
\newtheorem{lemma}[theorem]{Lemma}
\newtheorem{corollary}[theorem]{Corollary}
\theoremstyle{definition}
\newtheorem{example}[theorem]{Example}
\theoremstyle{remark}
\newtheorem{remark}[theorem]{Remark}
\numberwithin{equation}{section}
\newcommand{\Q}{\mathbb Q}\newcommand{\Z}{\mathbb Z}
\newcommand{\ord}{\operatorname{ord}}\newcommand{\Res}{\operatorname{Res}}
\newcommand{\sech}{\operatorname{sech}}
\newcommand{\dd}{\,d}
\title[Root-of-Unity Sections of Dilated Euler-Product Quotients]{Root-of-Unity Sections of Dilated Euler-Product Quotients}
\author{K. Srinivasa Raghava}
\address{Pie Mathematics Association}
\email{srinivasaraghavak@gmail.com}
\dedicatory{This work is dedicated to the enduring spirit of Srinivasa Ramanujan.}
\date{September 7, 2026}
\subjclass[2020]{Primary 11F20, 33D15; Secondary 11F11, 11G15, 30B10, 41A60}
\keywords{Euler product, dilate quotient, root-of-unity section, Lambert series, Dedekind eta-function, complex multiplication, Poisson summation}
\begin{document}
\begin{abstract}
We study root-of-unity sections of quotients formed from dilated Euler products. At the boundary point $q=1$, odd sections tend to nonzero cyclotomic constants, whereas even sections have infinitely many simple real poles. We determine the locations, spacing, and residues of all sufficiently late poles. In the cubic case we find the leading oscillatory error in an elementary approximation and prove that its exponential scale is sharp. At the origin we determine exact contact, including a gap-dependent parity law, and integral divisibility with the first nonzero coefficient determined for every modulus $|k-1|\ge2$. For Euler powers $k\ge2$, a moving formal pole controls the section error at small fixed nomes; a real zero-free interval gives a further convergence regime. For the twenty-fourth power we prove explicit CM approximation bounds and a signed asymptotic as the dilation increases.
\end{abstract}
\maketitle

\section{Introduction}
Let $\Phi(q)=\prod_{n\ge1}(1-q^n)$. For integers $a,d\ge2$ and $k\ge1$, consider
\begin{equation}\label{introW}
 W_{a,d}(q,k)=\frac1d\sum_{z^d=1}
 \frac{\sum_{m\ge0}z^mq^{am+1}\Phi(q^{am+1})^k}
      {\sum_{m\ge0}z^mq^{a(m+1)}\Phi(q^{a(m+1)})^k},
 \qquad
 T(q)=q^{1-a}\frac{\Phi(q)^k}{\Phi(q^a)^k}.
\end{equation}
The quotients are meromorphic for $0<|q|<1$. Their behaviour at $q=0$ and $q=1$ is sharply different.

The dilation $a$, section order $d$, and Euler power $k$ play different roles. Our first summary concerns analytic limits; parameters remain fixed except where a limit is specified.

\begin{theorem}\label{main}
Let $a,d\ge2$ and $k\ge1$ be integers.
\begin{enumerate}
\item As $q\to1^-$, every odd section tends to
\[
 \frac{\sin(\pi/a)}{d\sin(\pi(a-1)/(ad))}.
\]
Every even section has simple real poles $q_j\to1$, containing all its real poles sufficiently near $1$, with
\[
 1-q_j\sim\frac{\pi k}{3aj^2},\qquad
 \Res_{q=q_j}W_{a,d}(q,k)\sim\frac{2k\sin(\pi/a)}{3ad\,j^3}.
\]
\item Let $Q_*(k)$ be the unique solution in $(0,1)$ of
$Q\prod_{n\ge1}(1+Q^n)^k=1$. At each fixed real $0<q\le Q_*(k)^{1/a}$, all sections are finite and converge exponentially to $T(q)$ as $d\to\infty$.
\item Put $q_a=e^{-2\pi/\sqrt a}$. For $k=24$, all sections are finite at $q_a$ and converge exponentially to $a^6$ as $d\to\infty$. For each fixed $d\ge2$,
\[
 W_{a,d}(q_a,24)-a^6
 \sim48\pi(-1)^{d-1}23^{d-2}a^{-1/2}e^{-4\pi(d-1)\sqrt a}
 \quad(a\to\infty).
\]
\end{enumerate}
\end{theorem}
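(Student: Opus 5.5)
The three parts have different mechanisms, and I would prove them separately after one common reformulation. Put $f(x)=x\Phi(x)^k$, $N(z)=\sum_{m\ge0}z^mf(q^{am+1})$ and $D(z)=\sum_{m\ge0}z^mf(q^{a(m+1)})$, so that $W_{a,d}(q,k)=\frac1d\sum_{z^d=1}N(z)/D(z)$.

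\emph{Reduction for parts (2) and (3).} Whenever $D$ has no zeros on $|z|\le 1+\varepsilon$, write $R(z)=N(z)/D(z)=\sum_n r_nz^n$. Then the section equals $\sum_{n\ge0}r_{dn}$, and $r_0=f(q)/f(q^a)=T(q)$. Cauchy's estimate then gives
\[
W_{a,d}-T=\sum_{n\ge1}r_{dn}=O\bigl((1+\varepsilon)^{-d}\bigr).
\]
So part (2) reduces to proving that $D$ is zero-free on a disk of radius $>1$ for real $0<q\le Q_*(k)^{1/a}$.

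\emph{Part (2).} Set $x=q^a\le Q_*$. I would try a Rouché-type domination of the $m=0$ term. This needs the bounds $\Phi(x^{m+1})^k\le\prod(1+x^n)^k$ and $\Phi(x)^k\ge\ldots$, together with the geometric decay of $x^{m+1}$. The aim is to show that the defining equation $Q\prod(1+Q^n)^k=1$ is exactly the threshold at which
\[
\sum_{m\ge1}|f(x^{m+1})|\,|z|^m<f(x)
\]
holds on a slightly enlarged circle. The endpoint $x=Q_*$ must be checked separately: there the cruder bounds are strict for the tail terms, so some margin survives. I regard this bookkeeping as routine but fiddly.

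\emph{Part (3).} For $k=24$ we have $f=\Delta$ in $q$-form. Let $\tau=i/\sqrt a$, so that $q_a=e^{2\pi i\tau}$ and $q_a^a=e^{-2\pi\sqrt a}=:x$. The transformation $\Delta(-1/\tau)=\tau^{12}\Delta(\tau)$ gives $f(q_a)=a^6f(x)$, hence $T(q_a)=a^6$. Zero-freeness of $D$ follows as in part (2), since $x$ is tiny. For the signed asymptotic, the dominant term as $a\to\infty$ is $r_d$. I would expand $R(z)$ in powers of $x$ using
\[
f(x^{m+1})=x^{m+1}\bigl(1-24x^{m+1}+\dots\bigr),\qquad f(q_ax^m)=q_ax^m\bigl(1-24q_ax^m+\dots\bigr),
\]
and use $f(q_a)=a^6f(x)$ exactly. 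I expect the lower-order contributions $x^{d-1},\dots$ to $r_d$ to cancel, leaving a term of order $x^{2(d-1)}$. Its coefficient should be a geometric-type sum producing $(-1)^{d-1}23^{d-2}$, with $23=24-1$ arising from the two competing linear corrections, multiplied by a factor $24(1-q_a)\sim48\pi a^{-1/2}$. Finally I would check that $r_{2d},r_{3d},\dots$ are of strictly smaller order. Tracking this cancellation is the delicate algebraic step.

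\emph{Part (1), the main obstacle.} Write $q=e^{-t}$ and $F(s)=f(e^{-s})$. Modularity of $\eta$ gives
\[
F(s)\sim(2\pi/s)^{k/2}e^{-\pi^2k/(6s)}\quad(s\to0^+),
\]
and $F$ decays exponentially as $s\to\infty$. Extend $F$ by zero to $s<0$; it is flat at $0$, hence smooth on $\mathbb R$. For $z=e^{i\theta}$, Poisson summation gives
\[
\sum_m z^mF(atm+c)=\frac1{at}\sum_\ell\widehat F\Bigl(\frac{\theta+2\pi\ell}{at}\Bigr)e^{ic(\theta+2\pi\ell)/(at)}.
\]
A saddle point at $s=\sqrt{A/(i\xi)}$ with $A=\pi^2k/6$ yields
\[
\widehat F(\xi)\approx C\xi^{\beta}\exp\bigl(-\sqrt{2A|\xi|}\,(1\pm i)\bigr).
\]
Take $\theta\in(-\pi,\pi]$. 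For $|\theta|<\pi$ the single term $\ell=0$ dominates both $N$ and $D$, the shifts being $c=t$ and $c=at$ respectively. Hence $N/D\to e^{-i\theta(a-1)/a}$. Summing over the $d$-th roots with $d$ odd, i.e.\ $\theta=2\pi j/d$ for $|j|\le(d-1)/2$, gives the Dirichlet-kernel value $\sin(\pi/a)/\bigl(d\sin(\pi(a-1)/(ad))\bigr)$.

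For even $d$ and $z=-1$, the terms $\ell=0$ and $\ell=-1$ tie and are complex conjugates. Thus $D(-1)$ is asymptotically a real amplitude times
\[
\cos\Bigl(\sqrt{\pi^3k/(3at)}+\phi_0\Bigr).
\]
Its zeros give $t_j\sim\pi k/(3aj^2)$, which is the stated pole spacing. The residue comes from $N(-1)/D'(-1)$, where $N(-1)$ carries the factor $\sin(\pi/a)$ from the same conjugate pair. The main difficulty is making these asymptotics uniform with explicit error terms. In particular one must show that the $\ell\ne0,-1$ terms and the other $z$ are negligible, that each zero is simple, and that no other real poles occur near $1$; I would do this by a Rouché argument around each approximate zero.
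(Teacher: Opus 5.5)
\textbf{Parts (1) and (3).} These follow the paper's route. Part (1) uses twisted Poisson summation, with one dominant frequency for $z\ne-1$ and a tied conjugate pair at $z=-1$. Part (3) uses the Fricke evaluation followed by induction on the coefficient recursion. The paper obtains $C_m=24(q-1)(-23)^{m-2}Q^{m-1}+O_m(a^6Q^m)$ there, and uses $a^{13/2}Q\to0$ to make the errors uniform in $a$; you should track this explicitly. Your Rouch\'e variant for simplicity of the poles is fine, provided you use the complex-sector uniformity of the Fourier asymptotics. The paper instead uses the intermediate value theorem on phase windows together with a derivative bound on the error. One correction: the limiting phase is $e^{+i\theta(a-1)/a}$, not $e^{-i\theta(a-1)/a}$. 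This is harmless, because the $d$th roots are closed under conjugation.

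\textbf{Part (2) has a genuine gap.} Domination of the $m=0$ term cannot reach $Q_*(k)$. Write the denominator as $x\sum_m b_mz^m$ with $b_m=x^m\Phi(x^{m+1})^k>0$. Then exactly
\[
 \frac{b_1}{b_0}=x\frac{\Phi(x^2)^k}{\Phi(x)^k}=x\prod_{n\ge1}(1+x^n)^k=\beta_k(x).
\]
At $x=Q_*(k)$ the single $m=1$ term already satisfies $|f(x^2)z|=f(x)$ on $|z|=1$, so the whole tail strictly exceeds $f(x)$. Your inequality $\sum_{m\ge1}|f(x^{m+1})||z|^m<f(x)$ therefore fails at the endpoint, and by continuity on a whole interval below it. No sharpening of the bounds can help: $\beta_k=1$ is the condition $b_0=b_1$, not a domination threshold. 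For part (3) domination does work, since there $x\le e^{-2\pi\sqrt2}$ is tiny.

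\textbf{The missing idea is an Enestr\"om--Kakeya argument.} The successive ratios
\[
 \frac{b_{m+1}}{b_m}=x\prod_{n\ge1}\left(\frac{1-x^{(m+2)n}}{1-x^{(m+1)n}}\right)^k
\]
decrease strictly in $m$, because $(1-x^nu)/(1-u)$ is increasing in $u$ and $u=x^{(m+1)n}$ decreases. The first ratio is $\beta_k(x)$. Put $r_0=\beta_k(x)^{-1}$ and $c_m=b_mr_0^m$; then $c_0=c_1>c_2>\cdots\to0$. Write
\[
 (1-w)\sum_m c_mw^m=c_0-\sum_{m\ge1}(c_{m-1}-c_m)w^m,
\]
where the subtracted coefficients are nonnegative with total $c_0$.
\begin{itemize}
\item For $|w|<1$ the subtracted sum has modulus strictly less than $c_0$, so the right side is nonzero.
\item On $|w|=1$, vanishing forces $w^m=1$ for all $m\ge2$, hence $w=1$, where $\sum_m c_m>0$.
\end{itemize}
So the denominator is zero-free on $|z|\le\beta_k(x)^{-1}$. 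This disk contains the closed unit disk exactly when $x\le Q_*(k)$. At the endpoint itself, compactness supplies the radius $1+\varepsilon$ that your Cauchy-estimate reduction needs.
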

\begin{proof}
The radial assertions are Theorems~\ref{radial} and \ref{poles}; the fixed-nome convergence interval is Theorem~\ref{zerofree}. The CM bound and signed asymptotic are Theorem~\ref{CMtheorem} and Proposition~\ref{CMleading}.
\end{proof}

The cubic refinement in Theorem~\ref{cubicpoles} determines an oscillatory leading error whose scaled limit points fill $[-2,2]$. The next summary describes the formal contact and its arithmetic.

\begin{theorem}\label{mainformal}
Let $a,d\ge2$ and $k\ge1$ be integers. Set
\[
 E_0=2a(d-1)+2,\qquad
 E_d=\begin{cases}5a(d-1)/2+2,&d\text{ odd},\\a(5d-6)/2+3,&d\text{ even}.\end{cases}
\]
For $k\ge2$, the defect begins with
\[
 W_{a,d}(q,k)-T(q)=k(1-k)^{d-1}q^{E_0}+O(q^{E_0+1});
\]
for $k=1$ it begins with $(-1)^{d-1}q^{E_d}$. For $k\ge3$, write $h=k-1$. If $0\le r\le E_d-E_0$, then
\[
 h^{\,d-2-\lfloor r/a\rfloor-\lfloor(r-1)/a\rfloor}
 \ \bigm|\ [q^{E_0+r}]\bigl(W_{a,d}(q,k)-T(q)\bigr).
\]
The exponent of $h$ is nonnegative throughout the displayed range. For $h\ge3$, the first nonzero coefficient modulo $h$ occurs at $E_d$ and equals
$(-1)^{d-1}\{-k(k-3)/2\}^{\lfloor d/2\rfloor}$. For $k=3$, the first nonzero coefficient modulo two is one, at order $3a(d-1)+2$ for odd $d$ and $a(3d-4)+4$ for even $d$.
\end{theorem}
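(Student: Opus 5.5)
We start from the root-of-unity filter. For the numerator $N(x)=\sum_m x^m u_m$ and denominator $D(x)=\sum_m x^m v_m$, with $u_m=q^{am+1}\Phi(q^{am+1})^k$ and $v_m=q^{a(m+1)}\Phi(q^{a(m+1)})^k$, the average $\frac1d\sum_{z^d=1}N(z)/D(z)$ picks out the coefficients of $x^{jd}$ in the formal expansion of $N(x)/D(x)$ in powers of $x$. Write
\[
 N(x)/D(x)=\sum_{n\ge0}c_n(q)x^n .
\]
Then $W_{a,d}=\sum_{j\ge0}c_{jd}(q)$ formally. We have $c_0=u_0/v_0=T(q)$, so the defect is $\sum_{j\ge1}c_{jd}$.

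The $q$-adic valuations are $\ord u_m=am+1$ and $\ord v_m=a(m+1)$. Hence $D(x)/v_0=1+\sum_{m\ge1}x^m q^{am}\Phi(q^{a(m+1)})^k/\Phi(q^a)^k$. The $x^m$ coefficient of this series is $q^{am}(1+O(q^{a}))$, and the $O$-term depends on $k$ through $\Phi^k$. The plan is to expand $1/(D/v_0)$ as a geometric series in these terms and to track the lowest $q$-power in each coefficient $c_n$. The leading piece $\sum_m x^mq^{am}$ is $1/(1-xq^a)$, which is essentially telescoping against the numerator. Consequently $c_n$ has valuation much larger than $an$, and the contributions come from the first correction terms $-kq^{a(m+1)}$ and from $q^{am+1}$ in $\Phi(q^{am+1})^k=1-kq^{am+1}+\dots$.

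Concretely, write $N(x)/D(x)=T(q)\cdot\frac{1+xA(x)}{1+xB(x)}$. Then $c_n=T\,[x^n](A-B)x\sum_i(-xB)^i$. The key computation is to show that $[x^n]$ of this expression has valuation $2a(n-1)+2$, coming from products of $n-1$ "unit-step" factors of size $q^{2a}$. This is where the factor $k(1-k)^{d-1}$ comes from: each step contributes $(1-k)$, one from the identity part and $-k$ from the first Euler correction, and the initial difference $A-B$ contributes $k$. Taking $n=d$ gives $E_0$, and $c_{jd}$ for $j\ge2$ has larger valuation. When $k=1$ the factor $(1-k)$ vanishes, and one must go to second-order Euler terms (pentagonal $q^2$ terms). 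That produces the gap $E_d-E_0$ and the parity dependence: pairing of steps gives $\lfloor d/2\rfloor$ pairs, with a leftover step when $d$ is odd.

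For the divisibility claim, I would set up a weighted path expansion. Each coefficient $[q^{E_0+r}]c_d$ is a sum over compositions of the $d$ steps, and each step either carries a factor $(1-k)=-h$ or an "excess" factor raising the $q$-exponent. The excess is at least $a$ per skipped factor, except at the two boundary steps, which can absorb small excess; this is the source of $\lfloor r/a\rfloor+\lfloor(r-1)/a\rfloor$. Counting the forced factors of $h$ then gives the exponent $d-2-\lfloor r/a\rfloor-\lfloor(r-1)/a\rfloor$.

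Modulo $h$ we have $k\equiv1$, so the computation reduces to the $k=1$ calculation with coefficients in $\mathbb Z/h$. The surviving terms pair up, and each pair contributes the coefficient $-k(k-3)/2$ of $q^2$ in $\Phi^k$ at $k$ (not reduced). This should give $(-1)^{d-1}\{-k(k-3)/2\}^{\lfloor d/2\rfloor}$ at order $E_d$. When $h=2$ (that is, $k=3$) this quantity vanishes mod 2, and one must go further, to the $q^3$ coefficient, which yields the stated orders.

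The main obstacle is the combinatorial bookkeeping that identifies exactly which paths survive at orders between $E_0$ and $E_d$ and proves cancellation mod $h$. I would first verify the claims for $d=2,3$ by hand to fix the step weights, then induct on $d$ via the recursion $c_n=-\sum_{m\ge1}b_mc_{n-m}$ plus the numerator correction.
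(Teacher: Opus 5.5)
Your outline predicts the right shape of the answer, but it does not supply the mechanism that produces it. The mechanism you sketch for the divisibility is also quantitatively wrong. In the raw expansion $c_n=T\,[x^{n-1}]\,(A-B)/(1+xB)$ there are no ``unit steps'' carrying $(1-k)$. The single term $T\,[x^{n-1}](A-B)$ has $q$-order $a(n-1)+2$, far below the claimed order $2a(n-1)+2$. It is cancelled by $T\,[x^{n-2}](A-B)\cdot[x^1](1+xB)^{-1}$, and so on. You mention this telescoping but never carry it out, and every later claim depends on it: the coefficient $k(1-k)^{d-1}$, the $k=1$ order $E_d$, and the path counts. The missing step is to clear the factor $1-X$ first. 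Use the Lambert rearrangement $(1-X)V=F(q^a)+X\sum_nf_nq^{an}(q^{an}-1)/(1-Xq^{an})$ and its analogue for $U$, then rescale to $x=zq^{2a}$. The lowest-order parts become $-f_1x/(1-x)$ and $(1-(1+f_1)x)/(1-x)$. The quotient therefore begins with $kx/(1+(k-1)x)$, and the $q^{E_0}$ term follows at once. At $k=1$ the parity law comes from compositions in the inverse of the cleared denominator: at even index the minimal order is attained only by parts of size two.

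Second, your cost model for the divisibility is off by a factor of two. Suppose every interior factor avoiding $h$ cost at least $a$ of excess. Then at most $2+\lfloor r/a\rfloor$ of the $d-1$ factors could avoid $h$, and at $r=E_d-E_0$ a positive power of $h$ would survive once $d\ge6$. That contradicts the nonzero endpoint residue you want to prove. It also contradicts your own $k=1$ picture, where all $d-1$ factors avoid $(1-k)$ at total excess only about $a(d-1)/2$. The true rate is two non-$h$ factors per $a$ of excess, which is why two floor terms appear.

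The paper makes this precise with the diagonal generating functions $g_r=H_r/L^{1+\lfloor r/a\rfloor}$, where $L=1+(k-1)x$. Here $H_r\in\Z[x]$ comes from an integral recursion, and $\deg H_r\le\lfloor r/a\rfloor+\lfloor(r-1)/a\rfloor+2$ because the $q^{ai}$-coefficient of the cleared denominator has $x$-degree at most $i+1$. Then $[x^d]H_rL^{-1-\lfloor r/a\rfloor}$ is divisible by $h^{d-\deg H_r}$.

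Finally, the endpoint modulo $h$ is not the $k=1$ computation. The $q^2$-coefficient of $\Phi^k$ is $k(k-3)/2=-b$, and $b\equiv1+h/2\not\equiv1\pmod h$ for even $h$. You therefore need the second-contact computation for a general series $1-q+f_sq^s+\cdots$ over a commutative ring, without dividing by $f_s$. You also need to check that $b^{\lfloor d/2\rfloor}\not\equiv0\pmod h$ for even $h\ge4$; this holds because $1+h/2$ is a unit if $4\mid h$ and a nonzero idempotent if $h\equiv2\pmod4$.
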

\begin{proof}
Apply Corollaries~\ref{etacontact} and \ref{PhiSecond}, Theorem~\ref{adic}, and Corollary~\ref{modtwoendpoint}.
\end{proof}

When $q=e^{2\pi i\tau}$, the points $q=0$ and $q=1$ correspond to the cusps $\infty$ and $0$. We use this cusp terminology for the modular target. For $\Delta(\tau)=q\Phi(q)^{24}$ the target is
\begin{equation}\label{targetmod}
 T(q)=\frac{\Delta(\tau)}{\Delta(a\tau)}\qquad(k=24).
\end{equation}
It is a modular unit on $\Gamma_0(a)$: both factors have weight twelve, have no zeros in the upper half-plane, and are meromorphic at the cusps. In contrast, the even sections are not meromorphic modular functions on any finite-index subgroup with meromorphic behaviour at the cusps. In fact, the eta transformation gives
\begin{equation}\label{targetcusp}
 T(e^{-t/a})\sim a^{k/2}
       \exp\!\left(-\frac{\pi^2k(a-1)}{6t}\right)
       \quad(t\downarrow0),
\end{equation}
whereas the odd-section limit in Theorem~\ref{main} is nonzero. Formula~\eqref{targetcusp} follows directly from Lemma~\ref{Gschwartz}. Thus the high contact at one cusp coexists with a different limiting value, or with accumulating poles, at the other.

For the formal arguments we allow a general series $F(q)=1+\sum_{n\ge1}f_nq^n$ over a field $K$ of characteristic zero, except where coefficient identities are explicitly reduced in positive characteristic. Define
\begin{equation}\label{Rdef}
 R(X,q)=\frac{U(X,q)}{V(X,q)}=\sum_{m\ge0}C_m(q)X^m,
 \qquad
 \begin{aligned}
 U(X,q)&=\sum_{m\ge0}X^mF(q^{am+1}),\\
 V(X,q)&=\sum_{m\ge0}X^mF(q^{a(m+1)}).
 \end{aligned}
\end{equation}
Since $V(0,q)=F(q^a)$ is a unit, $R\in K[[q]][[X]]$. For $d\ge1$, put
\begin{equation}\label{Pdef}
 P_d(q)=q^{1-a}\sum_{j\ge0}C_{jd}(q)q^{ajd},
 \qquad T(q)=q^{1-a}\frac{F(q)}{F(q^a)}.
\end{equation}
This selects exactly the coefficients whose indices are multiples of $d$. In particular $P_1=q^{1-a}R(q^a,q)$, and $P_d\to T$ coefficientwise as $d\to\infty$. The classical multisection filter \cite[Chapter III]{Comtet} gives, after adjoining roots of unity,
\begin{equation}\label{rootfilter}
 P_d(q)=\frac1d\sum_{z^d=1}q^{1-a}R(zq^a,q).
\end{equation}
For $F=\Phi^k$ we write $P_d=W_{a,d}(q,k)$. Polynomial statements in $k$ are interpreted in $\Q[k][[q]]$ through the binomial expansion. At every integer specialization,
\[
 F(q)\in\Z[[q]],\qquad q^{a-1}W_{a,d}(q,k)\in\Z[[q]],\qquad
 W_{a,d}(q,k)\in q^{1-a}\Z[[q]].
\]
Thus unnormalized sections are Laurent series; for example the level-two target starts with $q^{-1}$. For $k=24N$ the normalization gives
\[
 T(q)=q^{(a-1)(N-1)}
 \left(\frac{\Delta(\tau)}{\Delta(a\tau)}\right)^N,
\]
so the additional power of $q$ must be retained.

The present article is a self-contained extension of the construction announced in the author's earlier preprint record \cite{RaghavaEarlier}. Its public abstract treats the case $a=d=2$, including depth-six contact, a modular baseline, and CM approximation. These special cases are retained here as part of the general theory. We develop arbitrary dilation and section order, explicit diagonal and integral divisibility bounds, and the parity-dependent radial pole laws; the cubic case admits a sharper error analysis. The earlier abstract also announces globally dominant pole pairs for several Euler powers. We do not use those assertions: the sampling-variable pole theorem here is local, and the nome-variable analysis determines all sufficiently late real poles. All theorem statements about these sections are proved here, with the classical inputs cited explicitly.

The first nonconstant coefficient of $F$ determines the first defect. Clearing one Lambert denominator then makes each coefficient diagonal rational, with explicit denominator and degree bounds. Rational generating functions are classical \cite{Stanley,FlajoletSedgewick}; the bounds here determine both the higher contact at $f_1=-1$ and a divisibility law modulo powers of $k-1$. The formal pole of these diagonal functions is an actual simple pole for small nonzero $q$, by Theorem~\ref{analyticpole}. Its principal part gives the leading large-$d$ error. Theorem~\ref{zerofree} supplies a further real interval on which every section converges to $T$. These are local and sufficient statements: they do not require global continuation of one pole branch.

Near $q=1$, twisted Poisson summation has a different role. One Fourier frequency dominates for every root except $-1$, and its transform cancels in the quotient. This explains the independence of the odd limit from $k$. At $-1$, two equally large frequencies interfere and produce the real poles. We control the Fourier error and its derivative to prove simplicity and locate every sufficiently late pole. For $k=3$, Glasser's exact eta-cube transform \cite{Glasser,Coffey} yields an exponentially accurate elementary location formula. The transform is classical; its application to the zeros of the alternating denominator is the result used here.

The product identities are treated in \cite{Andrews,AndrewsAskeyRoy,GasperRahman}, the modular background in \cite{Apostol,Serre,DiamondShurman,KubertLang}, and Lambert-series rearrangements in \cite{Schmidt}. The eta transformation is also the cusp input in the partition asymptotics of Hardy--Ramanujan and Rademacher \cite{HardyRamanujan,Rademacher}. The CM targets are classical singular moduli \cite{Cox,Maier,Zagier}; here they serve as targets of explicit section approximation.

We use $X$ for the coefficient variable, $x=Xq^a$ for its rescaling, and $z$ for a root-of-unity sample. In analytic arguments $Q=q^a$ and, on the positive radius, $Q=e^{-t}$. The diagonal functions are $g_r$, their linear denominator is $L$, and the real Fourier kernel is $G$. The alternating denominator is $\mathcal D(Q)$, with $V_-(t)=Q\mathcal D(Q)$; $\varrho$ denotes the coefficient of the moving principal part. Formal $O(q^n)$ means divisibility by $q^n$. Analytic implied constants may depend on the fixed parameters; square roots and complex powers use principal branches.

\section{Formal contact and integral coefficients}
\begin{lemma}\label{lambert}
The numerator and denominator in \eqref{Rdef} satisfy
\begin{align}
 (1-X)U&=F(q)+X\sum_{n\ge1}f_n
 \frac{q^n(q^{an}-1)}{1-Xq^{an}},\label{LamU}\\
 (1-X)V&=F(q^a)+X\sum_{n\ge1}f_n
 \frac{q^{an}(q^{an}-1)}{1-Xq^{an}}.\label{LamV}
\end{align}
\end{lemma}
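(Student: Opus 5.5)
We expand $F$ as a power series, swap the order of summation (everything is a formal identity in $K[[q]][[X]]$), and then multiply by $1-X$. For $U$ this reads
\[
 U(X,q)=\sum_{m\ge0}X^m\Bigl(1+\sum_{n\ge1}f_nq^{n(am+1)}\Bigr)
 =\frac1{1-X}+\sum_{n\ge1}f_nq^n\sum_{m\ge0}(Xq^{an})^m
 =\frac1{1-X}+\sum_{n\ge1}\frac{f_nq^n}{1-Xq^{an}} .
\]
Each inner geometric series is legitimate, since $q^{an}$ has positive $q$-order for $n\ge1$. The double sum converges in $K[[q]][[X]]$, because for a fixed power of $X$ only finitely many $n$ contribute to each coefficient of $q$.

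Next we multiply by $(1-X)$ and use
\[
 \frac{1-X}{1-Xq^{an}}=1+\frac{X(q^{an}-1)}{1-Xq^{an}} .
\]
This gives
\[
 (1-X)U=1+\sum_{n\ge1}f_nq^n+X\sum_{n\ge1}f_n\frac{q^n(q^{an}-1)}{1-Xq^{an}}
 =F(q)+X\sum_{n\ge1}f_n\frac{q^n(q^{an}-1)}{1-Xq^{an}},
\]
which is \eqref{LamU}.

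For $V$ the argument is identical with $q^{a(m+1)}=q^a\cdot q^{am}$ in place of $q^{am+1}$. The Lambert terms become $f_nq^{an}/(1-Xq^{an})$, and the same splitting gives $F(q^a)$ plus the stated correction, proving \eqref{LamV}.

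The only point needing care is the justification of rearrangements in the formal topology: one checks that the families are summable, using that $q^{an}$ has positive $q$-order for $n\ge1$. Beyond that the proof is routine algebra.
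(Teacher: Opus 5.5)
Your proof is correct and is essentially the paper's argument. The paper compares coefficients of $X^m$ directly: it observes that the coefficient of $X^m$ in $(1-X)U$ is the telescoped difference $F(q^{am+1})-F(q^{a(m-1)+1})=\sum_{n\ge1}f_nq^{n(a(m-1)+1)}(q^{an}-1)$. You instead first sum over $m$ to reach the Lambert form $U=(1-X)^{-1}+\sum_{n\ge1}f_nq^n/(1-Xq^{an})$, and then split $(1-X)/(1-Xq^{an})$; this is the same interchange of summations, carried out in a different order.

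One small point: the geometric series $\sum_m(Xq^{an})^m$ is legitimate in $K[[q]][[X]]$ by the $X$-adic structure alone. The positive $q$-order of $q^{an}$ is what makes the family summable over $n$, as you correctly state in the next sentence.
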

\begin{proof}
The constant coefficient in $X$ in the first identity is $F(q)$. For $m\ge1$, its coefficient on the left is
\[
 F(q^{am+1})-F(q^{a(m-1)+1})
 =\sum_{n\ge1}f_nq^{n(a(m-1)+1)}(q^{an}-1),
\]
which is the coefficient on the right after expanding the geometric denominator. The second identity follows in the same way, with $am+1$ replaced by $a(m+1)$. Each coefficient of a fixed power of $q$ is a finite sum.
\end{proof}

\begin{theorem}\label{contact}
Suppose $F(q)=1+f_pq^p+O(q^{p+1})$, where $p\ge1$ and $f_p\ne0$. For $m\ge1$,
\begin{equation}\label{Cfirst}
 C_m(q)=-f_p(1+f_p)^{m-1}q^{p(a(m-1)+1)}
       +O\!\left(q^{p(a(m-1)+1)+1}\right).
\end{equation}
For $d\ge2$,
\begin{equation}\label{Pfirst}
 P_d(q)-T(q)=-f_p(1+f_p)^{d-1}q^{(p+1)(a(d-1)+1)}
       +O\!\left(q^{(p+1)(a(d-1)+1)+1}\right).
\end{equation}
If $f_p\ne-1$, the displayed leading coefficients are nonzero.
\end{theorem}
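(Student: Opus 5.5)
The plan is to read off $C_m(q)$ from the Lambert forms of Lemma~\ref{lambert}. The common factor $(1-X)$ cancels, so
\[
 R(X,q)=\frac{F(q)+X S_U}{F(q^a)+X S_V},\qquad
 S_U=\sum_{n\ge1}f_n\frac{q^n(q^{an}-1)}{1-Xq^{an}},\quad
 S_V=\sum_{n\ge1}f_n\frac{q^{an}(q^{an}-1)}{1-Xq^{an}}.
\]
I will organize the computation by a weighted valuation: to a monomial $X^mq^e$ assign weight $e-apm$, and write $y=Xq^{ap}$. The claim \eqref{Cfirst} says exactly that, for $m\ge1$, the minimal $q$-order of the $X^m$-coefficient of $R$ is $p+ap(m-1)$, i.e.\ weight $p-ap$, with the stated leading coefficient.

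\emph{Step 1: weights of the pieces.} Since $f_n=0$ for $1\le n<p$, the minimal-weight part of $S_U$ is $-f_pq^p/(1-y)$. Every other term of $S_U$ has strictly larger $q$-order at each fixed power of $X$: this covers the $q^{an}$ correction in the numerator, the terms with $n>p$, and the geometric terms $X^jq^{anj}$ with $n>p$.

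Similarly, the minimal part of $XS_V$ is $-f_p y/(1-y)$, of weight $0$. The same reasoning applies to $F(q^a)=1+O(q^{ap})$ and to $F(q)=1+O(q^p)$, whose leading parts are the constant $1$. I will expand
\[
 R=\bigl(F(q)+XS_U\bigr)\,F(q^a)^{-1}\sum_{i\ge0}\bigl(-XS_V/F(q^a)\bigr)^i
\]
and compare weights term by term.

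\emph{Step 2: which terms reach the minimal weight.} Terms built from $F(q)$ times powers of $XS_V$ contribute to $X^m$ at $q$-order at least $apm$. This exceeds $p+ap(m-1)$ because $ap>p$ (as $a\ge2$). The terms $XS_U(XS_V)^i$ have weight $\ge p-ap$, with equality only from the leading parts. Hence, modulo strictly higher $q$-order in each $X^m$ with $m\ge1$,
\[
 R\equiv \frac{-f_pXq^p/(1-y)}{1-f_py/(1-y)}=\frac{-f_pXq^p}{1-(1+f_p)y}.
\]
The coefficient of $X^m$ on the right is $-f_p(1+f_p)^{m-1}q^{p+ap(m-1)}$, which is \eqref{Cfirst}.

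The main care point is making the weight bookkeeping rigorous. Each coefficient of $X^m$ is a finite combination of the expanded pieces, and every discarded term must raise the $q$-order within that fixed $X^m$. I would state a small lemma: weights are additive under products, and each coefficient $[X^m]$ of a product involves only finitely many factor combinations.

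\emph{Step 3: deduce \eqref{Pfirst}.} Since $C_0=R(0,q)=F(q)/F(q^a)$, we have $q^{1-a}C_0=T$, and so
\[
 P_d-T=q^{1-a}\sum_{j\ge1}C_{jd}q^{ajd}.
\]
The $j=1$ term has leading exponent
\[
 1-a+p(a(d-1)+1)+ad=(p+1)(a(d-1)+1),
\]
with coefficient $-f_p(1+f_p)^{d-1}$. For $j\ge2$ the $q$-order grows by at least $d(pa+a)$ per step in $j$, so those terms are strictly higher. When $f_p\ne-1$ (and $f_p\ne0$ by hypothesis) the leading coefficients are nonzero, since we are in characteristic zero.
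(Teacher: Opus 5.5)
Your proposal is correct and is essentially the paper's proof. The paper also substitutes $X=xq^{-ap}$ (your $y=Xq^{ap}$) into the cleared Lambert forms and isolates the lowest $q$-parts $-f_pxq^{-(a-1)p}/(1-x)$ and $(1-(1+f_p)x)/(1-x)$. It divides these to get $-f_px/(1-(1+f_p)x)$ and notes that the $j=1$ selected term dominates. Your geometric expansion of the denominator with coefficientwise weight bookkeeping is only a more granular form of the paper's computation in $K[[x]]((q))$. The final nonvanishing needs only that $K$ is a field, not characteristic zero.
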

\begin{proof}
Set $X=xq^{-ap}$ in the cleared forms \eqref{LamU}--\eqref{LamV}. Their lowest $q$-parts are
\[
 -\frac{f_px}{1-x}q^{-(a-1)p},\qquad
 \frac{1-(1+f_p)x}{1-x}.
\]
Indeed, a negative term in the $n$th numerator summand has the form $-f_nX^jq^{n(a(j-1)+1)}$, with $j\ge1$. After substitution its exponent is $aj(n-p)-(a-1)n$, at least $-(a-1)p$, with equality only for $n=p$. The positive terms have larger exponents. In the denominator the minimum is zero, again from $n=p$ and the constant term.

These computations take place in $K[[x]]((q))$. The second expression is a unit in $K[[x]]$, so the lowest part of their quotient is
\[
 -\frac{f_px}{1-(1+f_p)x}q^{-(a-1)p}
 =-f_p\sum_{m\ge1}(1+f_p)^{m-1}x^mq^{-(a-1)p}.
\]
This proves \eqref{Cfirst}. Since $C_0=F(q)/F(q^a)$,
\[
 P_d-T=q^{1-a}\sum_{j\ge1}C_{jd}q^{ajd}.
\]
The lower bound for the order of the $j$th summand is $(p+1)(a(jd-1)+1)$, which increases strictly with $j$. Thus $j=1$ gives \eqref{Pfirst}. Over a field the leading coefficient is nonzero when $f_p\ne0,-1$.
\end{proof}

\begin{corollary}\label{etacontact}
For $F=\Phi^k$ and $d\ge2$,
\begin{equation}\label{Wfirst}
 W_{a,d}(q,k)-T(q)
 =k(1-k)^{d-1}q^{2a(d-1)+2}
  +O\!\left(q^{2a(d-1)+3}\right).
\end{equation}
For a finite product $F=\prod_{\delta\ge1}\Phi(q^\delta)^{r_\delta}$ with integer exponents, let $p$ be the least index with $r_p\ne0$. The coefficient displayed in \eqref{Pfirst} is $r_p(1-r_p)^{d-1}$; it is the first nonzero coefficient when $r_p\ne1$.
\end{corollary}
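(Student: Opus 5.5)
This follows by specializing Theorem~\ref{contact}; the only work is to read off the parameters $p$ and $f_p$ in each case. For $F=\Phi^k$, expand $\Phi(q)=1-q+O(q^2)$ using the binomial series, so that $F(q)=(1-q+O(q^2))^k=1-kq+O(q^2)$. This gives $p=1$ and $f_1=-k$, with $f_1\ne0$ since $k\ge1$. Substituting these values into \eqref{Pfirst}, the coefficient is $-f_p(1+f_p)^{d-1}=k(1-k)^{d-1}$. The exponent is $(p+1)(a(d-1)+1)=2a(d-1)+2$. This is exactly \eqref{Wfirst}.

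For a general finite eta-quotient $F=\prod_{\delta\ge1}\Phi(q^\delta)^{r_\delta}$, let $p$ be the least index with $r_p\ne0$. Each factor with $\delta>p$ satisfies $\Phi(q^\delta)^{r_\delta}=1+O(q^\delta)=1+O(q^{p+1})$, so it does not affect terms of order at most $p$. The factor with $\delta=p$ contributes
\[
\Phi(q^p)^{r_p}=(1-q^p+O(q^{2p}))^{r_p}=1-r_pq^p+O(q^{p+1}).
\]
For negative $r_p$ this expansion still holds, because $\Phi(q^p)$ is a unit power series and the binomial series $(1+u)^{r}$ is valid in $\Z[[u]]$. Hence $f_p=-r_p\ne0$. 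Theorem~\ref{contact} then gives the coefficient $-f_p(1+f_p)^{d-1}=r_p(1-r_p)^{d-1}$. By the last sentence of Theorem~\ref{contact}, this coefficient is nonzero, and is therefore the first nonzero coefficient of $P_d-T$, precisely when $f_p\ne-1$, that is, when $r_p\ne1$. The same criterion applies to the $\Phi^k$ case with $r_1=k$: for $k=1$ the displayed coefficient vanishes, which is consistent with the separate statement for $k=1$ in Theorem~\ref{mainformal}.

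There is no substantive obstacle. The only points needing care are the integer-exponent expansion when $r_p<0$ and checking that the factors with $\delta>p$ cannot interfere at order $q^p$. Both are immediate from working in $\Z[[q]]$, which embeds in the field $K=\Q$ required by Theorem~\ref{contact}.
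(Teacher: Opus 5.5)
Your proposal is correct and matches the paper's proof, which likewise substitutes $f_1=-k$ and $f_p=-r_p$ into Theorem~\ref{contact}; your checks on negative exponents and on the factors with $\delta>p$ supply details the paper leaves implicit. The paper also remarks that the identity holds polynomially in $k$, which follows from the same argument taken over the field $K=\Q(k)$.
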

\begin{proof}
The relevant first coefficients are $f_1=-k$ and $f_p=-r_p$, respectively. Substitute them into Theorem~\ref{contact}. The identities also hold polynomially in $k$.
\end{proof}

\begin{example}\label{mixedproduct}
For $F(q)=\Phi(q^2)^3\Phi(q^3)$, $a=3$, and $d=2$,
\[
 P_2(q)-T(q)=-6q^{12}+O(q^{13}).
\]
Indeed $F(q)=1-3q^2+O(q^3)$, so Theorem~\ref{contact} applies with $p=2$ and $f_p=-3$.
\end{example}

To examine the cancellation at $f_1=-1$, set $x=Xq^a$ and clear the first Lambert denominator. Define, within the formal sections,
\begin{align}
 N(x,q)={}&(1-x)F(q)+f_1x(q-q^{1-a})\notag\\
 &+x(1-x)\sum_{n\ge2}f_n
 \frac{q^{n-a}(q^{an}-1)}{1-xq^{a(n-1)}},\label{Ndef}\\
 M(x,q)={}&(1-x)F(q^a)+f_1x(q^a-1)\notag\\
 &+x(1-x)\sum_{n\ge2}f_n
 \frac{q^{a(n-1)}(q^{an}-1)}{1-xq^{a(n-1)}}.\label{Mdef}
\end{align}
Then $R(x/q^a,q)=N/M$. Both series have polynomial coefficients in $x$ at every $q$-degree, and $N$ has no powers below $q^{1-a}$.

\begin{theorem}\label{secondcontact}
Let $a\ge2$ and suppose, over a field, that
\[
 F(q)=1-q+f_s q^s+O(q^{s+1}),\qquad s\ge2,\qquad f_s\ne0.
\]
For $\ell\ge0$, the coefficients in \eqref{Rdef} satisfy
\begin{align}
 C_{2\ell+1}(q)&=(-f_s)^\ell q^{a(s+1)\ell+1}
                   +O(q^{a(s+1)\ell+2}),\label{Codd}\\
 C_{2\ell+2}(q)&=f_s(-f_s)^\ell q^{a(s+1)\ell+a+s}
                   +O(q^{a(s+1)\ell+a+s+1}).\label{Ceven}
\end{align}
Consequently, with $P_d$ defined by coefficient selection in \eqref{Pdef},
\begin{align}
 P_{2\ell+1}-T&=(-f_s)^\ell q^{a(s+3)\ell+2}
                   +O(q^{a(s+3)\ell+3})\quad(\ell\ge1),\label{Podd}\\
 P_{2\ell+2}-T&=f_s(-f_s)^\ell q^{a(s+3)\ell+2a+s+1}
                   +O(q^{a(s+3)\ell+2a+s+2})\quad(\ell\ge0).\label{Peven}
\end{align}
All displayed leading coefficients are nonzero. Over a commutative ring the coefficient identities remain valid, also when $f_s=0$, without the nonvanishing assertion.
\end{theorem}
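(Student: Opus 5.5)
The plan is to follow the template of Theorem~\ref{contact}, but with the cleared forms $N,M$ of \eqref{Ndef}--\eqref{Mdef} in place of \eqref{LamU}--\eqref{LamV}, since $f_1=-1$ kills the leading term $-f_1x/(1-(1+f_1)x)$ used there. Throughout put $x=Xq^a$, so that $N/M=R(x/q^a,q)=\sum_m C_m(q)q^{-am}x^m$. I will write $\widetilde C_m=C_mq^{-am}$ and $N_m,M_m$ for the $x^m$-coefficients of $N$ and $M$, all lying in $K((q))$. The two expansions \eqref{Codd}--\eqref{Ceven} become
\[
 \widetilde C_{2\ell+1}=(-f_s)^\ell q^{a(s-1)\ell+1-a}+\cdots,\qquad
 \widetilde C_{2\ell+2}=f_s(-f_s)^\ell q^{a(s-1)\ell+s-a}+\cdots .
\]
So each passage $m\mapsto m+2$ multiplies the leading term by $-f_sq^{a(s-1)}$. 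I would prove both expansions simultaneously by induction on $m$, from the convolution identity $\sum_{j\le m}M_j\widetilde C_{m-j}=N_m$.

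The first step is to compute the low-order parts of $N_m$ and $M_m$ exactly, using $F=1-q+f_sq^s+\cdots$ and $f_n=0$ for $2\le n<s$. For $M$ one finds $M_0=F(q^a)=1+O(q^a)$. The constant $-1$ from $-xF(q^a)$ cancels against $-x(q^a-1)$, which comes from $f_1=-1$, so the $q^a$ terms cancel as well and $M_1=O(q^{as})$. For $j\ge2$ the orders of $M_j$ grow roughly like $a(s-1)j$, dominated by the $n=s$ summand. For $N$ the $f_1$ term contributes $xq^{1-a}$, so $N_1=q^{1-a}+O(1)$. The $n=s$ summand contributes $-f_sx(1-x)q^{s-a}(q^{as}-1)/(1-xq^{a(s-1)})$. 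Its $x^2$ part is $f_sq^{s-a}$, which gives $\widetilde C_2$, and its higher parts are geometric in $xq^{a(s-1)}$. The terms $(1-x)F(q)$ contribute only at $x^0$ and $x^1$, at order $\ge0$.

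With these orders in hand, the induction isolates, for each $m$, one or two dominant products $M_j\widetilde C_{m-j}$ together with the dominant part of $N_m$. I expect the factor $-f_s$ per two steps to come from the $x^2$-coefficient of $1/M$, or equivalently from $-M_2$ combined with the geometric tail in $N$, acting on $\widetilde C_{m-2}$. The main obstacle is precisely this bookkeeping: showing that every other contribution (from $M_j$ with $j\ne2$, from the $q$-corrections in $F$, and from the summands with $n>s$) is of strictly higher order, and that the odd and even chains do not contaminate one another. For the even chain this works only because the offset $s-1$ is positive. What I would try first is to sidestep the convolution by treating the rational function in $x$ obtained from the $n=s$ summand and the $f_1$ term alone. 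I would invert it in closed form, with denominator of the shape $1+f_sx^2q^{a(s-1)}$ up to units, and then prove that the remaining terms only perturb orders above the claimed ones. This is the same order-comparison argument used in the proof of Theorem~\ref{contact}.

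Finally, $P_d-T=q^{1-a}\sum_{j\ge1}C_{jd}q^{ajd}$ as in that proof. Substituting the $j=1$ term gives exponents $a(s+3)\ell+2$ for $d=2\ell+1$ and $a(s+3)\ell+2a+s+1$ for $d=2\ell+2$, which are \eqref{Podd} and \eqref{Peven}. The lower bounds from \eqref{Codd}--\eqref{Ceven} show that the order of $C_mq^{am}$ increases strictly with $m$, so the terms with $j\ge2$ are of higher order. Nonvanishing of the leading coefficients follows from $f_s\ne0$ over a field. For a commutative ring, every step above is an identity among integer polynomials in the $f_n$, so the coefficient identities persist, including the case $f_s=0$.
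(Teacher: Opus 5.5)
Your outer structure matches the paper's: pass to the cleared forms $N,M$ in $x=Xq^a$, find the leading term of each $x$-coefficient of $N/M$, then select the $j=1$ term as in Theorem~\ref{contact}. Your final paragraph and the ring remark are fine. The gap is that the entire content of the theorem, the order comparison, is left as ``I expect'', and the input you would feed into it is wrong.

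Put $Q=q^a$. The sum in \eqref{Mdef} contributes $f_nQ^{n-1}(Q^n-1)$ to $M_1$, so
\[
 M_1=-f_sQ^{s-1}+O(Q^s),
\]
not $O(Q^s)$: you cancelled only the terms outside the sum. For $j\ge2$ the $n$th summand gives $f_n(Q^n-1)(Q^{n-1}-1)Q^{(n-1)(j-1)}$, so $M_j=f_sQ^{(s-1)(j-1)}+O(Q^{(s-1)(j-1)+1})$. The exact order is needed here, not just the growth rate. Similarly $N_1=q^{1-a}+O(q^{2-a})$, not $q^{1-a}+O(1)$, since the $n=s$ term $-f_sq^{s-a}$ has negative order when $s<a$.

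Your shortcut has the same omission. After clearing $1-xQ^{s-1}$, the truncated denominator contains $-(1+f_s)xQ^{s-1}$ at the same $Q$-order as $f_sx^2Q^{s-1}$. So it is not of the shape $1+f_sx^2Q^{s-1}$, and discarding the linear term needs exactly the argument you have not supplied.

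The missing idea is a weighted count: $\ord_QM_j\ge(s-1)\max(1,j-1)$ for $j\ge1$, and $\max(1,j-1)\ge j/2$ with equality only at $j=2$. The paper expands $M^{-1}$ over compositions $j_1+\cdots+j_v$. This gives $[x^{2\ell}]M^{-1}=(-f_s)^\ell Q^{(s-1)\ell}+O(Q^{(s-1)\ell+1})$, coming from the all-twos composition alone, and $[x^{2\ell+1}]M^{-1}=O(Q^{(s-1)(\ell+1)})$. Then $N_1[x^{2\ell}]M^{-1}$ is the unique minimal term in the odd case, with competitors higher by at least $s-1$. In the even case $N_2[x^{2\ell}]M^{-1}$ is unique, with competitors higher by at least $(a-1)(s-1)$, $(a-1)s$ and $a(s-1)$.

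Your convolution induction also closes with the same data:
\begin{itemize}
\item[(i)] The cross-parity term $M_1\widetilde C_{m-1}$ exceeds the target order by $s-1$ for odd $m$ and by $(a-1)(s-1)$ for even $m$.
\item[(ii)] For $j\ge3$, $M_j\widetilde C_{m-j}$ exceeds it by at least $a(s-1)(j/2-1)$ when $j$ is even. When $j$ is odd it exceeds it by at least $(s-1)\{a(j-3)/2+1\}$ for odd $m$ and $(s-1)\{a(j-1)/2-1\}$ for even $m$.
\item[(iii)] $N_m$ lies strictly above the target for $m\ge3$.
\end{itemize}
Hence only $N_1$, $N_2$ and $M_2\widetilde C_{m-2}$ reach the target order, which gives the factor $-f_s$ per two steps. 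These checks are where $s\ge2$ and $a\ge2$ actually enter, and they are exactly what the proposal defers. As written, it does not prove \eqref{Codd}--\eqref{Ceven}.
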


The parity distinction comes from compositions of an integer: at an even
index, the least possible denominator order is attained only by parts of size
two. This selects different numerator terms at odd and even indices.
\begin{proof}
Put $Q=q^a$, $M_j=[x^j]M$ and $N_j=[x^j]N$, using
\eqref{Ndef}--\eqref{Mdef}. Direct expansion gives
\[
 M_0=1+O(Q),\qquad M_1=O(Q^{s-1}),\qquad
 M_j=f_sQ^{(s-1)(j-1)}+O(Q^{(s-1)(j-1)+1})\quad(j\ge2).
\]
Indeed, the $n$th summand contributing to $M_j$, for $j\ge2$, is
$f_n(Q^n-1)Q^{(n-1)(j-1)}(Q^{n-1}-1)$, whose first term occurs at $n=s$.
After division by $M_0$, expand $M^{-1}$ geometrically in its nonconstant
$x$-part. A composition $h=j_1+\cdots+j_v$ contributes at order at least
\[
 (s-1)\sum_{i=1}^v\max(1,j_i-1)\ge(s-1)\lceil h/2\rceil
\]
in $Q$. If $h=2\ell$, equality requires every part to be two. Hence
\begin{equation}\label{Minverse}
 \begin{aligned}
 [x^{2\ell}]M^{-1}
   &=(-f_s)^\ell Q^{(s-1)\ell}+O(Q^{(s-1)\ell+1}),\\
 [x^{2\ell+1}]M^{-1}&=O(Q^{(s-1)(\ell+1)}).
 \end{aligned}
\end{equation}
The numerator satisfies
\begin{align*}
 N_0&=F(q),\qquad N_1=q^{1-a}+O(q^{2-a}),\\
 N_j&=f_s q^{a(s-1)(j-2)+s-a}
       +O(q^{a(s-1)(j-2)+s-a+1})\quad(j\ge2).
\end{align*}
In $[x^{2\ell+1}](N/M)$, the unique least-order term is
$N_1[x^{2\ell}]M^{-1}$, of order $a(s-1)\ell+1-a$; all terms with
$j\ge2$ have order at least $a(s-1)\ell+s-a$, and the $j=0$ term has
still greater order. In $[x^{2\ell+2}](N/M)$ it is
$N_2[x^{2\ell}]M^{-1}$, of order $a(s-1)\ell+s-a$.
The $j=1$ term has greater order by at least $(a-1)(s-1)$, the $j=0$
term by at least $(a-1)s$, and every $j\ge3$ term by at least $a(s-1)$.
Multiplying by $q^{am}$ proves \eqref{Codd}--\eqref{Ceven}.
After adding the section weight $am+1-a$, these orders increase strictly
with $m$, so the first selected coefficient gives \eqref{Podd}--\eqref{Peven}.
Only series with constant coefficient one were inverted. Thus the coefficient
identities are valid over any commutative ring, and over a field the
hypothesis $f_s\ne0$ proves their nonvanishing.
\end{proof}

\begin{corollary}\label{PhiSecond}
For $F=\Phi$, the leading coefficient of $W_{a,d}(q,1)-T(q)$ is $(-1)^{d-1}$ and its order is
\[
 \begin{cases}
 5a(d-1)/2+2,&d\text{ odd},\\
 a(5d-6)/2+3,&d\text{ even}.
 \end{cases}
\]
In particular $W_{2,d}(q,1)-T(q)=(-1)^{d-1}q^{5d-3}+O(q^{5d-2})$.
\end{corollary}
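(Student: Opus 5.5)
We apply Theorem~\ref{secondcontact} with $F=\Phi$. The task is to identify $s$ and $f_s$. Euler's pentagonal number theorem gives
\[
 \Phi(q)=1-q-q^2+q^5+q^7-\cdots .
\]
Hence $F=1-q+f_sq^s+O(q^{s+1})$ with $s=2$ and $f_s=-1\ne0$. The standing hypothesis $a\ge2$ is part of the corollary's setting, and the sections $W_{a,d}(q,1)$ are by definition the coefficient selections $P_d$ of \eqref{Pdef}. So all hypotheses of Theorem~\ref{secondcontact} hold, and the nonvanishing assertion applies.

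We then read off the two parities. With $s=2$ we have $a(s+3)=5a$ and $-f_s=1$. For odd $d=2\ell+1$ with $\ell\ge1$, \eqref{Podd} gives leading coefficient $(-f_s)^\ell=1=(-1)^{d-1}$. Its order is
\[
 5a\ell+2=5a(d-1)/2+2 .
\]
For even $d=2\ell+2$ with $\ell\ge0$, \eqref{Peven} gives leading coefficient $f_s(-f_s)^\ell=-1=(-1)^{d-1}$. Its order is
\[
 5a\ell+2a+3, \qquad \ell=(d-2)/2,
\]
and this equals $5a(d-2)/2+2a+3=a(5d-6)/2+3$. This covers every $d\ge2$.

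For the special case $a=2$, both formulas reduce to $5d-3$. In the odd case, $5(d-1)+2=5d-3$. In the even case, $(5d-6)+3=5d-3$. This gives the stated expansion of $W_{2,d}(q,1)-T(q)$.

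There is no real obstacle here. The only point needing care is to confirm that the coefficient of $q^2$ in $\Phi$ is $-1$, so that $s=2$ rather than a larger index. This follows immediately from the pentagonal expansion, or directly from $(1-q)(1-q^2)=1-q-q^2+q^3$, since the factors $1-q^n$ with $n\ge3$ do not affect coefficients up to $q^2$.
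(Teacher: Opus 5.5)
Your proposal is correct and is the same argument as the paper's: apply Theorem~\ref{secondcontact} with $s=2$ and $f_s=-1$, read off \eqref{Podd} and \eqref{Peven}, and specialize to $a=2$. Your parity bookkeeping ($\ell\ge1$ for odd $d$, $\ell\ge0$ for even $d$) and the exponent arithmetic, including $5d-3$ for $a=2$, are all right.
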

\begin{proof}
Euler's product begins $\Phi(q)=1-q-q^2+O(q^3)$, so Theorem~\ref{secondcontact} applies with $s=2$ and $f_s=-1$.
\end{proof}

The cleared forms also control the coefficient diagonals and their integral divisibility.
Define the diagonal generating functions by
\begin{equation}\label{Gdef}
 R(x/q^a,q)=\sum_{r\ge0}q^{r-a+1}g_r(x),\qquad
 g_r(x)=\sum_{m\ge0}[q^{am-a+1+r}]C_m(q)x^m.
\end{equation}
The weight argument in Theorem~\ref{contact}, with weight $n-am$, proves that this substitution is well defined even when $f_1=0$. Put $L(x)=1-(1+f_1)x$.

\begin{lemma}\label{degreelemma}
Write $N=\sum_{w\ge1-a}q^wN_w(x)$ and $M=\sum_{i\ge0}q^{ai}M_{ai}(x)$ in \eqref{Ndef}--\eqref{Mdef}. Then
\[
 N_{1-a}=-f_1x,\qquad M_0=L,
\]
and
\begin{equation}\label{degreebounds}
 \deg M_{ai}\le i+1\quad(i\ge1),\qquad
 \deg N_{r-a+1}\le\lfloor(r-1)/a\rfloor+2\quad(r\ge0).
\end{equation}
\end{lemma}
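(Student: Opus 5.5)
The plan is to expand each of \eqref{Ndef} and \eqref{Mdef} term by term as a power series in $q$. For each monomial $q^wx^j$ that appears, I will record both its $q$-exponent and its $x$-degree. Every bound in \eqref{degreebounds} should then follow from a one-line inequality relating the two. The only device needed is the geometric expansion
\[
 \frac1{1-xq^{a(n-1)}}=\sum_{j\ge0}x^jq^{a(n-1)j},
\]
which is legitimate in $K[x][[q]]$ because $n\ge2$. Consequently each fixed power of $q$ receives only finitely many contributions.

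\emph{The series $M$.} Put $Q=q^a$. The three pieces of \eqref{Mdef} contribute as follows.
\begin{itemize}
\item $(1-x)F(Q)$ contributes $(1-x)$ at $Q^0$, and $x$-degree at most $1$ at every $Q^i$.
\item $f_1x(Q-1)$ contributes $-f_1x$ at $Q^0$, and degree $1$ at $Q^1$.
\item The $n$th summand, for $n\ge2$, expands into terms $f_nx^{j+1}(1-x)\bigl(Q^{(n-1)(j+1)+n}-Q^{(n-1)(j+1)}\bigr)$. These have $x$-degree at most $j+2$ and $Q$-exponent at least $(n-1)(j+1)\ge j+1\ge1$.
\end{itemize}
The $Q^0$ part is therefore $(1-x)-f_1x=1-(1+f_1)x=L$. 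At $Q^i$ with $i\ge1$, a summand term can contribute only if $j+1\le(n-1)(j+1)\le i$. Its degree is then at most $j+2\le i+1$, and the two non-sum pieces have degree $1\le i+1$. This gives $\deg M_{ai}\le i+1$.

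\emph{The series $N$.} Index by $w=r-a+1$. The pieces of \eqref{Ndef} contribute as follows.
\begin{itemize}
\item $(1-x)F(q)$ has exponents $w\ge0$, so $r\ge a-1\ge1$, and degree at most $1$.
\item $f_1x(q-q^{1-a})$ has degree $1$, at $w=1-a$ (that is, $r=0$) and at $w=1$.
\item The $n$th summand, $n\ge2$, gives terms of degree at most $j+2$ at exponents $w=n-a+a(n-1)j$ or $w=n-a+a(n-1)j+an$. These correspond to $r\ge n-1+a(n-1)j\ge1+aj$.
\end{itemize}
The only contribution at $r=0$ is therefore $-f_1x$, which proves $N_{1-a}=-f_1x$. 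For a summand term, $r\ge1+aj$ gives $j\le\lfloor(r-1)/a\rfloor$, so its degree is at most $\lfloor(r-1)/a\rfloor+2$. The remaining pieces have degree $1$. When $r\ge1$ this is clearly at most the bound. When $r=0$ the bound equals $\lfloor -1/a\rfloor+2=1$, which is still compatible.

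\emph{Where the care is needed.} The main point requiring attention is the bookkeeping of the lowest exponents. One must check that no summand with $n\ge2$ reaches $r=0$ or the $Q^0$ level; this is guaranteed by $n-1\ge1$. One must also treat the edge case $r=0$, where the floor is $-1$. Both checks are elementary once the exponents are written out as above.
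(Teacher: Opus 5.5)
Your proposal is correct and follows essentially the same route as the paper's proof: expand each Lambert denominator geometrically, then compare $q$-exponent with $x$-degree to get $j\le\lfloor(r-1)/a\rfloor$ for $N$ and $j+2\le i+1$ for $M$. Your explicit checks of the lowest-order terms and of the edge case $r=0$, where the bound equals $1$, are more detailed than the paper's terse version, but the argument is the same.
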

\begin{proof}
Expand each denominator as a geometric series. In $N$, the negative part of the $n$th summand, $n\ge2$, has $q$-degree $n-a+a(n-1)j$ and $x$-degree at most $j+2$, where $j\ge0$. If its $q$-degree is $r-a+1$, then $n-1+a(n-1)j=r$. Hence $j\le\lfloor(r-1)/a\rfloor$. The positive part has an additional $an$ in its $q$-degree and satisfies the same bound. The terms outside the sum have degree at most one.

In $M$, the corresponding negative term has $q$-degree $a(n-1)(j+1)$ and degree at most $j+2$. If this is $ai$, then $j+2\le i+1$. The positive term again has higher $q$-degree. The constant $q$-parts follow directly from \eqref{Ndef}--\eqref{Mdef}.
\end{proof}

\begin{theorem}\label{diagonal}
For every $r\ge0$,
\begin{equation}\label{rationalG}
 g_r(x)=\frac{H_r(x)}{L(x)^{1+\lfloor r/a\rfloor}},\qquad H_r\in K[x],
\end{equation}
where
\begin{equation}\label{Hdegree}
 \deg H_r\le\lfloor r/a\rfloor+\lfloor(r-1)/a\rfloor+2.
\end{equation}
If $1+f_1\ne0$, the polynomial part of $g_r$ has degree at most $\lceil r/a\rceil$. For $m\ge\lceil r/a\rceil+1$, the coefficient $[x^m]g_r$ is $(1+f_1)^m$ times a polynomial in $m$ of degree at most $\lfloor r/a\rfloor$.
\end{theorem}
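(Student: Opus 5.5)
We will work entirely with the cleared forms \eqref{Ndef}--\eqref{Mdef}, so that $R(x/q^a,q)=N/M$. We then solve $M\cdot(N/M)=N$ degree by degree in $q$. Write $S:=N/M=\sum_{r\ge0}q^{r-a+1}g_r(x)$. The $q$-expansions are $M=\sum_{i\ge0}q^{ai}M_{ai}(x)$ and $N=\sum_w q^wN_w(x)$, and all coefficients lie in $K[[x]]$. Comparing the coefficient of $q^{r-a+1}$ on both sides gives the recursion
\[
 L(x)\,g_r(x)=N_{r-a+1}(x)-\sum_{1\le i\le\lfloor r/a\rfloor}M_{ai}(x)\,g_{r-ai}(x),
\]
using $M_0=L$ from Lemma~\ref{degreelemma}. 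Since $L$ has constant term $1$, this determines $g_r$ uniquely in $K[[x]]$, and it agrees with \eqref{Gdef}.

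The rationality claim \eqref{rationalG} with the bound \eqref{Hdegree} then follows by strong induction on $r$. Put $e(r)=\lfloor r/a\rfloor$ and $D(r)=e(r)+\lfloor(r-1)/a\rfloor+2$. For $r<a$ the sum is empty and $g_r=N_{r-a+1}/L$, with $\deg N_{r-a+1}\le\lfloor(r-1)/a\rfloor+2=D(r)$ when $r\ge1$. For $r=0$ we have $g_0=-f_1x/L$, which has degree $1\le D(0)=1$ since $\lfloor -1/a\rfloor=-1$. For general $r$, multiply the recursion by $L^{e(r)}$ to get
\[
 H_r=L^{e(r)}N_{r-a+1}-\sum_i M_{ai}\,L^{e(r)-e(r-ai)-1}H_{r-ai}.
\]
Since $e(r-ai)=e(r)-i$, the exponent $e(r)-e(r-ai)-1=i-1$ is nonnegative, so $H_r$ is a polynomial.

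For the degrees, the first term has degree at most $e(r)+\lfloor(r-1)/a\rfloor+2=D(r)$ by \eqref{degreebounds}. The $i$th term has degree at most $(i+1)+(i-1)+D(r-ai)$. Using $\lfloor(r-ai-1)/a\rfloor=\lfloor(r-1)/a\rfloor-i$, this equals $2i+e(r)-i+\lfloor(r-1)/a\rfloor-i+2=D(r)$. This bookkeeping is the main obstacle, and it is exactly why the degree bounds of Lemma~\ref{degreelemma} are stated as they are. The edge case $r-ai=0$ needs a separate check. There $D(0)=1$, while the generic formula gives $\lfloor -1/a\rfloor$ from $\lfloor(r-ai-1)/a\rfloor$, which is consistent, so no extra loss occurs.

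For the last two assertions, assume $1+f_1\ne0$. Perform polynomial division of $H_r$ by $L^{1+e(r)}$, where $L$ is linear with root $x=(1+f_1)^{-1}$. The quotient has degree at most $D(r)-1-e(r)=\lfloor(r-1)/a\rfloor+1$, which equals $\lceil r/a\rceil$ for $r\ge1$; for $r=0$ the quotient is a constant. The proper part is a partial-fraction sum $\sum_{j=1}^{1+e(r)}c_j L^{-j}$. The coefficient $[x^m]L^{-j}=\binom{m+j-1}{j-1}(1+f_1)^m$ is $(1+f_1)^m$ times a polynomial in $m$ of degree $j-1\le e(r)$. For $m\ge\lceil r/a\rceil+1$ the polynomial part no longer contributes, which yields the stated form of $[x^m]g_r$.
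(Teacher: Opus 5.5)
Your proposal is correct and is essentially the paper's own proof: the same coefficient recursion $Lg_r=N_{r-a+1}-\sum_i M_{ai}g_{r-ai}$, multiplication by $L^{\lfloor r/a\rfloor}$ to get the polynomial recursion with factors $L^{i-1}$, the same degree count from Lemma~\ref{degreelemma}, and the same polynomial division together with $[x^m]L^{-j}=\binom{m+j-1}{j-1}(1+f_1)^m$. Your separate check at $r-ai=0$ is unnecessary, because $\lfloor (r-ai-1)/a\rfloor=\lfloor (r-1)/a\rfloor-i$ holds for every integer $r$ (the floor commutes with integer shifts), but it does no harm.
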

\begin{proof}
Comparing coefficients in $N=M\sum_{r\ge0}q^{r-a+1}g_r$ gives
\[
 Lg_r=N_{r-a+1}-\sum_{i=1}^{\lfloor r/a\rfloor}M_{ai}g_{r-ai}.
\]
Multiplying by $L^{\lfloor r/a\rfloor}$ yields the finite recursion
\begin{equation}\label{Hrec}
 H_r=L^{\lfloor r/a\rfloor}N_{r-a+1}
 -\sum_{i=1}^{\lfloor r/a\rfloor}L^{i-1}M_{ai}H_{r-ai}.
\end{equation}
The initial numerator is $H_0=-f_1x$. Lemma~\ref{degreelemma} and induction prove that every $H_r$ is a polynomial. The first term in \eqref{Hrec} satisfies \eqref{Hdegree}; for the $i$th term in the sum, the degree is at most
\[
 (i-1)+(i+1)+\lfloor r/a\rfloor-i+\lfloor(r-1)/a\rfloor-i+2,
\]
which gives the same bound.

When $L$ has degree one, polynomial division shows that the polynomial part has degree at most $\lfloor(r-1)/a\rfloor+1=\lceil r/a\rceil$. The remaining proper fraction is a linear combination of $L^{-j}$, $1\le j\le1+\lfloor r/a\rfloor$. Since
\[
 [x^m]L^{-j}=\binom{m+j-1}{j-1}(1+f_1)^m,
\]
the final assertion follows. This is the usual coefficient description for rational generating functions; compare \cite[Chapter 4]{Stanley} and \cite[Chapter IV]{FlajoletSedgewick}.
\end{proof}

The increasing denominator powers in \eqref{rationalG} can be collected into one formal pole. Theorem~\ref{analyticpole} will identify this formal root with the unique nearest analytic pole when $F=\Phi^k$, $k\ge2$, and $|q|$ is sufficiently small.

\begin{theorem}\label{movingpole}
Suppose $f_1\ne0,-1$. There is a unique $x_0(q)\in K[[q^a]]$ with
\[
 x_0(0)=\frac1{1+f_1},\qquad M(x_0(q),q)=0.
\]
Set
\begin{equation}\label{rho}
 \varrho(q)=-\frac{N(x_0(q),q)}{M_x(x_0(q),q)}
 =-\frac{f_1}{(1+f_1)^2}q^{1-a}+O(q^{2-a}).
\end{equation}
For every $r\ge0$, the coefficient of $q^{r-a+1}$ in
\begin{equation}\label{principalpart}
 R(x/q^a,q)-\frac{\varrho(q)}{x_0(q)-x}
\end{equation}
is a polynomial in $x$ of degree at most $\lceil r/a\rceil$. Therefore
\begin{equation}\label{stable}
 [q^{am-a+1+r}]C_m(q)
 =[q^{r-a+1}]\varrho(q)x_0(q)^{-m-1}
 \quad\text{if }m\ge\lceil r/a\rceil+1.
\end{equation}
\end{theorem}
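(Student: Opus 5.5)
The plan is to work entirely in the graded setting of Lemma~\ref{degreelemma}. There $M=\sum_{i\ge0}q^{ai}M_{ai}(x)$, with $M_0=L(x)=1-(1+f_1)x$ and $\deg M_{ai}\le i+1$. We also have $N=\sum_{w\ge1-a}q^wN_w(x)$, with $N_{1-a}=-f_1x$.

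\textbf{Existence of $x_0$.} Since $M$ involves only powers of $Q=q^a$, we solve $M(x,q)=0$ for $x\in K[[Q]]$ by Hensel/Newton iteration. At $Q=0$ the equation is $L(x)=0$, whose simple root is $1/(1+f_1)$, and $L'=-(1+f_1)\ne0$. The iteration $x\mapsto x-M(x,q)/M_x(x,q)$ then converges $Q$-adically. One point needs care: $M(x,q)$ is a series in $Q$ whose coefficients are polynomials in $x$, so substituting $x=x_0(q)\in K[[Q]]$ is well defined. Uniqueness follows from simplicity of the root, since two solutions would differ by a multiple of a unit times their difference. Next, $M_x(x_0,q)=-(1+f_1)+O(Q)$ is a unit, so $\varrho$ is defined. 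Its leading term is
\[
-N_{1-a}(x_0(0))\,q^{1-a}/(-(1+f_1))=-f_1q^{1-a}/(1+f_1)^2,
\]
which is \eqref{rho}. Note that $N(x_0,q)$ is a Laurent series starting at $q^{1-a}$.

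\textbf{Principal part.} I would write $M(x,q)=(x_0(q)-x)\,\widetilde M(x,q)$ by formal division by the linear factor $x_0-x$, degree by degree in $Q$. Concretely, each $M_{ai}$ is a polynomial, so $(M(x)-M(x_0))/(x-x_0)$ is computed coefficientwise. Then $\widetilde M=-(1+f_1)\cdot(\text{unit})$ at $Q^0$; more precisely $\widetilde M_0=1+f_1$ up to sign, a nonzero constant. Also $\widetilde M(x_0,q)=-M_x(x_0,q)$. Likewise $N(x)-N(x_0)=(x-x_0)\widetilde N$. This gives
\[
R-\frac{\varrho}{x_0-x}=\frac{N}{(x_0-x)\widetilde M}+\frac{N(x_0)}{M_x(x_0)(x_0-x)}.
\]
Combining over the common denominator, the numerator vanishes at $x=x_0$. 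The difference is therefore $D/\widetilde M$ with $D$ a genuine series in $q$ with polynomial coefficients in $x$. Since $\widetilde M$ has constant leading $q$-part, $D/\widetilde M$ has polynomial coefficients at each $q$-degree. The first claim is thus qualitative: the expression in \eqref{principalpart} has polynomial $q$-coefficients.

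\textbf{Degree bound, the main obstacle.} The degree bound $\lceil r/a\rceil$ is the delicate part. I would get it by comparing with Theorem~\ref{diagonal} rather than tracking degrees through $\widetilde M^{-1}$. Expand $\varrho/(x_0-x)$ in $q$ around $x_0(0)=1/(1+f_1)$. Its coefficient of $q^{r-a+1}$ is a finite combination of $L(x)^{-j}$ for $j\le 1+\lfloor r/a\rfloor$, because the expansion is in powers of $(x_0-x_0(0))\in QK[[Q]]$ times $(x_0(0)-x)^{-j-1}$. The diagonal $g_r$ has the same form plus a polynomial of degree $\le\lceil r/a\rceil$, by Theorem~\ref{diagonal}. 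Their difference is a rational function whose only possible pole is at $x=1/(1+f_1)$. By the previous step it is also a polynomial, so the pole parts cancel. What remains is the difference of the polynomial part of $g_r$ and the polynomial part of the $\varrho$-expansion; the latter is zero, since its terms are proper fractions. Hence the degree is $\le\lceil r/a\rceil$.

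\textbf{Conclusion.} Formula \eqref{stable} then follows by reading off $[x^m]$ for $m>\lceil r/a\rceil$. The polynomial contributes nothing there, and $[x^m]\varrho/(x_0-x)=\varrho\,x_0^{-m-1}$. We then take the $q^{r-a+1}$ coefficient and use \eqref{Gdef}.
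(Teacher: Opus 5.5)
Your proposal is correct and follows essentially the paper's own argument. It has the same three ingredients: a formal Newton root of $M$ at the simple zero of $L$; a difference-quotient splitting of $N/M$ into a part with polynomial $q$-coefficients plus $\varrho/(x_0-x)$, where your combined fraction over $(x_0-x)\widetilde M(x,q)\widetilde M(x_0,q)$ is exactly the paper's $(\mathcal B(x,q)-\mathcal B(x_0,q))/(x-x_0)$ with $\mathcal B=N/\widehat M$ and $\widehat M=-\widetilde M$; and the degree bound, which holds because the principal part contributes only proper fractions in powers of $L$, so the remainder is precisely the polynomial part of $g_r$ controlled by Theorem~\ref{diagonal}.

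The uniqueness sentence is garbled and should be restated. It should say that $M(x_1,q)-M(x_2,q)$ equals $x_1-x_2$ times a divided difference with constant term $-(1+f_1)\ne0$. That divided difference is therefore a unit, which forces $x_1=x_2$.
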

\begin{proof}
The constant term of $M$ in $q$ is $L$, whose derivative is the nonzero scalar $-(1+f_1)$. Equating successive coefficients in $M(x_0(q),q)=0$ determines a unique root: at each order the unknown coefficient is multiplied by this scalar. Since $M$ involves only $q^a$, the root lies in $K[[q^a]]$.

All substitutions are defined because the coefficients of $M$ and $N$ at fixed $q$-degree are polynomials. The difference quotient
\[
 \widehat M(x,q)=\frac{M(x,q)-M(x_0(q),q)}{x-x_0(q)}
\]
belongs to $K[x][[q]]$ and has nonzero scalar constant term $-(1+f_1)$. Thus $N/\widehat M$ belongs to $K[x]((q))$. Denote this quotient by $\mathcal B(x,q)$. Polynomial division at each $q$-degree gives
\[
 \frac{N}{M}
 =\frac{\mathcal B(x,q)-\mathcal B(x_0(q),q)}{x-x_0(q)}
   +\frac{\varrho(q)}{x_0(q)-x},
 \qquad \varrho(q)=-\mathcal B(x_0(q),q).
\]
The first term has polynomial coefficients in $x$. If $x_0(q)=x_0(0)+\delta(q)$, the second term expands as
\[
 \varrho(q)\sum_{j\ge0}\frac{(-\delta(q))^j}{(x_0(0)-x)^{j+1}}.
\]
At each $q$-degree this is a proper rational function of $x$. It follows that the first term gives exactly the polynomial part of $g_r$, whose degree was bounded in Theorem~\ref{diagonal}. Expanding $(x_0-x)^{-1}=\sum_{m\ge0}x^mx_0^{-m-1}$ proves \eqref{stable}. Finally $\widehat M(x_0,q)=M_x(x_0,q)$ and $N_{1-a}=-f_1x$ give \eqref{rho}.
\end{proof}

\begin{corollary}\label{kpolynomial}
For $a=2$ and $F=\Phi^k$, let $\gamma_{m,r}=[q^{2m-1+r}]C_m(q)$. For each $r\ge0$ there is a polynomial $P_r(m;k)\in\Q[k,m]$, of degree at most $\lfloor r/2\rfloor$ in $m$, such that
\begin{equation}\label{kfactor}
 \gamma_{m,r}=(-1)^{m-1}k(k-1)^{m-1-r}P_r(m;k)
 \qquad(m\ge r+1).
\end{equation}
For each integer $m\ge r+1$, $P_r(m;1)\ne0$. In particular the exact power of $k-1$ dividing $\gamma_{m,r}\in\Q[k]$ is $m-1-r$.
\end{corollary}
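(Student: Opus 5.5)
We specialize Theorem~\ref{diagonal} to $a=2$ and $F=\Phi^k$, with $k$ treated as an indeterminate. Here $f_1=-k$, so $1+f_1=1-k$ and $L(x)=1-(1-k)x$. The coefficients $f_n$ of $\Phi^k$ lie in $\Q[k]$ through the binomial expansion. Hence every $N_w$ and $M_{2i}$ in Lemma~\ref{degreelemma} lies in $\Q[k][x]$. Because the recursion \eqref{Hrec} uses only multiplication by $L$, $M_{2i}$ and $N_w$, each $H_r$ lies in $\Q[k][x]$, with no division by $1-k$.

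For $a=2$ the bound \eqref{Hdegree} reads $\deg H_r\le\lfloor r/2\rfloor+\lfloor(r-1)/2\rfloor+2=r+1$. Put $J=1+\lfloor r/2\rfloor$ and $H_r=\sum_{i\le r+1}h_i(k)x^i$. Then
\[
 \gamma_{m,r}=[x^m]\frac{H_r}{L^J}=\sum_{i=0}^{r+1}h_i(k)\binom{m-i+J-1}{J-1}(1-k)^{m-i}.
\]
For $m\ge r+1$ every index $i$ contributes, and we can pull out $(1-k)^{m-1-r}$. This gives
\[
 \gamma_{m,r}=(-1)^{m-1-r}(k-1)^{m-1-r}\widetilde P_r(m;k),
\]
where $\widetilde P_r(m;k)=\sum_i h_i(k)(1-k)^{r+1-i}\binom{m-i+J-1}{J-1}$. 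This $\widetilde P_r$ lies in $\Q[k,m]$ and has degree at most $J-1=\lfloor r/2\rfloor$ in $m$. The sign $(-1)^{-r}$ is absorbed into the polynomial.

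Next we extract the factor $k$. At $k=0$ we have $F=1$, so $U=V=(1-X)^{-1}$ and $R=1$. Hence $C_m=0$ for all $m\ge1$, and therefore $k\mid\gamma_{m,r}$ in $\Q[k]$. Consequently $\widetilde P_r(m;0)=0$ for every integer $m\ge r+1$. Since $\widetilde P_r(m;0)$ is a polynomial in $m$ with infinitely many zeros, it vanishes identically. Hence $\widetilde P_r=kP_r$ with $P_r\in\Q[k,m]$, which is \eqref{kfactor}.

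For the nonvanishing at $k=1$, all terms of $\widetilde P_r(m;1)$ with $i<r+1$ vanish. This leaves
\[
 P_r(m;1)=\pm h_{r+1}(1)\binom{m-r-2+J}{J-1}.
\]
The binomial coefficient is positive for $m\ge r+1$, so it suffices to show $h_{r+1}(1)\ne0$. At $k=1$ we have $L=1$ and $g_r=H_r$, so $h_{r+1}(1)=[q^{3r+1}]C_{r+1}$ for $F=\Phi$. This coefficient is supplied by Theorem~\ref{secondcontact} with $a=2$, $s=2$, $f_s=-1$, as in Corollary~\ref{PhiSecond}:
\begin{itemize}
\item if $r=2\ell$, then \eqref{Codd} gives the coefficient of $q^{6\ell+1}=q^{3r+1}$ in $C_{2\ell+1}$, which is $1$;
\item if $r=2\ell+1$, then \eqref{Ceven} gives the coefficient of $q^{6\ell+4}=q^{3r+1}$ in $C_{2\ell+2}$, which is $-1$.
\end{itemize}
In both cases $h_{r+1}(1)\ne0$. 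Since $k$ and $P_r(m;k)$ are nonzero at $k=1$, the exact power of $k-1$ dividing $\gamma_{m,r}$ is $m-1-r$.

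The main point needing care is that the factorization holds in $\Q[k]$ and not merely in $\Q(k)$. This is why we avoid partial fractions, whose coefficients could involve $(1-k)^{-1}$, and expand $H_r/L^J$ directly, using $m\ge r+1\ge\deg H_r$.
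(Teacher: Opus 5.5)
Your proof is correct and follows the paper's route: Theorem~\ref{diagonal} with $\deg H_r\le r+1$ and denominator $L^{1+\lfloor r/2\rfloor}$, the factor $k$ from $R=1$ at $k=0$, coefficient extraction for $m\ge r+1$, and Theorem~\ref{secondcontact} for the value $[x^{r+1}]g_r=(-1)^r$ at $k=1$. The only difference is where you remove $k$: the paper factors it from the numerator as $H_r-g_r(0)L^s=kx\Pi_r$ before extracting coefficients, while you extract coefficients first and then remove $k$ because $\widetilde P_r(m;0)$ vanishes at infinitely many integers $m$.
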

\begin{proof}
Every nonconstant coefficient of $\Phi^k$ is divisible by $k$ in $\Q[k]$. Reduction of \eqref{Rdef} modulo $k$ gives $R=1$. With $s=1+\lfloor r/2\rfloor$ and $L=1+(k-1)x$, Theorem~\ref{diagonal} therefore gives
\[
 g_r(x)-g_r(0)=\frac{kx\Pi_r(x,k)}{L^s},\qquad
 \Pi_r=\sum_{i=0}^r\pi_i(k)x^i\in\Q[k,x].
\]
For $m\ge r+1$, coefficient extraction proves \eqref{kfactor} with
\[
 P_r(m;k)=\sum_{i=0}^r(-1)^i\pi_i(k)(k-1)^{r-i}
                 \binom{m-i+s-2}{s-1}.
\]
Its degree in $m$ is at most $s-1$. At $k=1$, Corollary~\ref{PhiSecond} gives
$[x^{r+1}]g_r(x)=(-1)^r$. Since $L=1$ at this specialization, $\pi_r(1)=(-1)^r$. Thus
\[
 P_r(m;1)=\binom{m-r+s-2}{s-1}>0.
\]
The same calculation includes $r=0$, where $P_0=1$.
\end{proof}

\begin{example}
For $a=2$, $F=\Phi^k$, and $L=1+(k-1)x$, direct extraction from \eqref{Hrec} gives
\[
 g_0=\frac{kx}{L},\qquad
 g_1=1+\frac{kx(kx-k-3x+1)}{2L}.
\]
Hence $\gamma_{m,0}=(-1)^{m-1}k(k-1)^{m-1}$ and, for $m\ge2$,
\[
 \gamma_{m,1}=(-1)^m\frac{k(k-2)(k+1)}2(k-1)^{m-2}.
\]
These identities also follow by using $f_1=-k$ and $f_2=k(k-3)/2$ in \eqref{Ndef}--\eqref{Mdef}.
\end{example}

The degree bound also gives an integral statement. It is important here to use the integral recursion \eqref{Hrec}, rather than infer divisibility of integer values from a factorization over $\Q[k]$.

\begin{theorem}\label{adic}
Let $a,d\ge2$ and $k\in\Z$ with $h=|k-1|\ge2$. Define
\[
 E_0=2a(d-1)+2,\qquad
 E_d=\begin{cases}
 5a(d-1)/2+2,&d\text{ odd},\\
 a(5d-6)/2+3,&d\text{ even}.
 \end{cases}
\]
The coefficients of $W_{a,d}(q,k)-T(q)$ below $q^{E_0}$ vanish. For $0\le r\le E_d-E_0$,
\begin{equation}\label{generaladic}
 h^{\,d-2-\lfloor r/a\rfloor-\lfloor(r-1)/a\rfloor}
 \ \bigm|\ [q^{E_0+r}]\bigl(W_{a,d}(q,k)-T(q)\bigr).
\end{equation}
The exponent in \eqref{generaladic} is nonnegative. At the endpoint, with $b=-k(k-3)/2$,
\begin{equation}\label{adicendpoint}
 [q^{E_d}]\bigl(W_{a,d}(q,k)-T(q)\bigr)
 \equiv(-1)^{d-1}b^{\lfloor d/2\rfloor}\pmod h.
\end{equation}
If $h\ge3$, this endpoint coefficient is nonzero modulo $h$, and therefore
is the first nonzero coefficient modulo $h$. More explicitly,
\[
 b\equiv
 \begin{cases}
 1\pmod h,&h\text{ odd},\\
 1+h/2\pmod h,&h\text{ even}.
 \end{cases}
\]
\end{theorem}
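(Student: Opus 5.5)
The proof has two parts. The divisibility \eqref{generaladic} will come from Theorem~\ref{diagonal}, read integrally. The endpoint congruence \eqref{adicendpoint} will come from Theorem~\ref{secondcontact}, read over the ring $\Z/h\Z$.

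\textbf{Step 1: reduce to a single diagonal.} Write $P_d-T=q^{1-a}\sum_{j\ge1}C_{jd}q^{ajd}$. By \eqref{Gdef}, the coefficient of $q^{am-a+1+r}$ in $C_m$ is $[x^m]g_r$. Taking $j=1$ and $m=d$, the exponent becomes $(1-a)+ad+(ad-a+1+r)=E_0+r$, so the $j=1$ term contributes exactly $[x^d]g_r$ to $[q^{E_0+r}]$. Theorem~\ref{contact} places the $j\ge2$ terms at order at least $2a(2d-1)+2$. This exceeds $E_d$, since $4ad-2a-(5ad/2-5a/2)=3ad/2+a/2>0$, and the even case is similar. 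Hence
\[
 [q^{E_0+r}](W_{a,d}-T)=[x^d]g_r\qquad(0\le r\le E_d-E_0).
\]
Vanishing below $E_0$ is Corollary~\ref{etacontact}.

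\textbf{Step 2: integrality and the power of $k-1$.} For integer $k$ we have $F=\Phi^k\in\Z[[q]]$, so every $M_{ai}$ and $N_w$ in Lemma~\ref{degreelemma} has integer coefficients. In the recursion \eqref{Hrec}, no division occurs once it is multiplied by $L^{\lfloor r/a\rfloor}$. By induction, $H_r\in\Z[x]$ with the degree bound \eqref{Hdegree}. Set $s=1+\lfloor r/a\rfloor$ and $L=1+(k-1)x$. Then
\[
 [x^d]g_r=\sum_i [x^i]H_r\cdot\binom{d-i+s-1}{s-1}\bigl(-(k-1)\bigr)^{d-i},
\]
and every term is divisible by $(k-1)^{d-\deg H_r}$. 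This gives \eqref{generaladic}, since $|k-1|=h$.

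Nonnegativity of the exponent is a floor count. On the range $r\le a(d-1)/2$ (odd $d$), or $r\le a(d-2)/2+1$ (even $d$), one checks $\lfloor r/a\rfloor+\lfloor(r-1)/a\rfloor\le d-2$. The terms with $i>d$ cause no trouble, because the sum runs only over $i\le d$.

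\textbf{Step 3: the endpoint modulo $h$.} Reduce modulo $h$. The coefficients $f_1=-k$ and $f_2=k(k-3)/2=-b$ are integers, and $k\equiv1\pmod h$. So over $\Z/h\Z$ we have $F=1-q-bq^2+O(q^3)$. Theorem~\ref{secondcontact} holds over any commutative ring, with $s=2$ and $f_s=-b$. For $d=2\ell+1$ it gives leading coefficient $b^\ell$ at order $5a\ell+2=E_d$. For $d=2\ell+2$ it gives $-b^{\ell+1}$ at order $5a\ell+2a+3=E_d$. Both cases match $(-1)^{d-1}b^{\lfloor d/2\rfloor}$, and all lower coefficients vanish modulo $h$.

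For the explicit value of $b$, write $k=1\pm h$. Then $b=-(1\pm h)(-2\pm h)/2$. Expanding modulo $h$ gives $b\equiv1$ when $h$ is odd and $b\equiv1+h/2$ when $h$ is even.

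\textbf{Step 4: non-vanishing.} Here I expect the only delicate point: $b$ need not be a unit modulo $h$. It suffices to find a prime $p\mid h$ with $p\nmid b$.
\begin{itemize}
\item If $h$ is odd, $b\equiv1$ and any prime $p\mid h$ works.
\item If $4\mid h$, then $b\equiv1+h/2$ is odd, so $p=2$ works.
\item If $h\equiv2\pmod4$, the hypothesis $h\ge3$ forces an odd prime $p\mid h$. Since $p\mid h/2$, we get $b\equiv1\pmod p$.
\end{itemize}
In every case $b^{\lfloor d/2\rfloor}\not\equiv0\pmod h$, so the endpoint coefficient is the first nonzero one modulo $h$.

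The main obstacle is Step 2. One must make sure the recursion \eqref{Hrec} is genuinely integral and that the binomial extraction gives the stated exponent, rather than inferring divisibility of integer values from a factorization over $\Q[k]$.
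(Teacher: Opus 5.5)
Your proposal is correct and follows the paper's proof essentially step for step: reduction to the single diagonal coefficient $[x^d]g_r$ via Theorem~\ref{contact}, integrality of $H_r$ from the recursion \eqref{Hrec} followed by binomial extraction against $L^{-(1+\lfloor r/a\rfloor)}$, and the endpoint congruence from Theorem~\ref{secondcontact} applied over $\Z/h\Z$ with $s=2$ and $f_2=-b$. The only differences are cosmetic. You cite Corollary~\ref{etacontact} for the vanishing below $E_0$. You also prove $b^{\lfloor d/2\rfloor}\not\equiv0\pmod h$ by exhibiting a prime $p\mid h$ with $p\nmid b$, whereas the paper observes that $1+h/2$ has square one modulo $h$ when $4\mid h$ and is a nonzero idempotent when $h\equiv2\pmod4$.
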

\begin{proof}
All coefficients of $\Phi^k$ are integers, including for negative integer $k$. The recursion \eqref{Hrec} therefore gives $H_r\in\Z[x]$ after specialization, with the degree bound \eqref{Hdegree}. Put $s=1+\lfloor r/a\rfloor$ and write $H_r=\sum_j u_jx^j$. Since $L=1+(k-1)x$,
\[
 [x^d]g_r(x)=\sum_{0\le j\le d}
 u_j(-1)^{d-j}(k-1)^{d-j}
       \binom{d-j+s-1}{s-1}.
\]
For the stated range of $r$, the degree bound is at most $d$, so every summand is divisible by
$h^{d-2-\lfloor r/a\rfloor-\lfloor(r-1)/a\rfloor}$.

By Theorem~\ref{contact}, the term selected from $C_{2d}$ starts at $q^{4ad-2a+2}$, and later selected terms start still later. Since $E_d<4ad-2a+2$, only $C_d$ contributes through $q^{E_d}$. Its coefficient at $q^{E_0+r}$ after selection is precisely $[x^d]g_r$. This proves \eqref{generaladic} and the vanishing below $E_0$. Directly from the two definitions of $E_d$, the exponent in \eqref{generaladic} is positive for $r<E_d-E_0$ and zero at the endpoint.

Reduce the cleared forms modulo $h$. Then $f_1=-k=-1$ and $f_2=k(k-3)/2=-b$ in $\Z/h\Z$. The coefficient computation in the proof of Theorem~\ref{secondcontact} uses no division by $f_2$ and is valid over this ring even if $f_2$ is zero or a zero divisor. For $d=2\ell+1$ it gives the endpoint $b^\ell$, and for $d=2\ell+2$ it gives $-b^{\ell+1}$. This is \eqref{adicendpoint}. For odd $h$, reduction of $2b=-k(k-3)$ modulo $h$ gives $b=1$.
For even $h$, write $k=1+\epsilon h$, with $\epsilon\in\{1,-1\}$. Then
\[
 b=1+\frac{\epsilon h}{2}-\frac{h^2}{2}
   \equiv1+\frac h2\pmod h.
\]
If $4\mid h$, this residue has square one. If $h\equiv2\pmod4$, it is
idempotent. In either case, for $h>2$ every positive power is nonzero
modulo $h$. This proves the endpoint assertion.
\end{proof}

\begin{corollary}\label{leveltwoadic}
For $a=2$, integer $k$ with $|k-1|\ge2$, and $4d-2\le n\le5d-3$,
\[
 (k-1)^{5d-3-n}\ \bigm|\ [q^n]\bigl(W_{2,d}(q,k)-T(q)\bigr).
\]
For $k=24$ the defect modulo $23$ begins exactly with
\[
 W_{2,d}(q,24)-T(q)\equiv(-1)^{d-1}q^{5d-3}+O(q^{5d-2})\pmod{23}.
\]
\end{corollary}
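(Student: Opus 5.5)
The plan is to obtain the statement by specializing Theorem~\ref{adic} to $a=2$. The only work is bookkeeping of the exponents.

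First I compute the endpoints. With $a=2$ we get $E_0=4(d-1)+2=4d-2$. For $E_d$, both parities give the same value: for odd $d$, $5\cdot2(d-1)/2+2=5d-3$, and for even $d$, $2(5d-6)/2+3=5d-3$. So the range $0\le r\le E_d-E_0$ is exactly $4d-2\le n\le5d-3$ with $n=E_0+r$.

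Next I simplify the exponent in \eqref{generaladic}. I claim $\lfloor r/2\rfloor+\lfloor(r-1)/2\rfloor=r-1$ for every integer $r\ge0$. For $r\ge1$ this is the standard identity $\lfloor m/2\rfloor+\lceil m/2\rceil=m$ applied to $m=r-1$, after writing $\lfloor r/2\rfloor=\lceil(r-1)/2\rceil$. For $r=0$ it reads $0+(-1)=-1$, using the floor convention $\lfloor-1/2\rfloor=-1$ that Theorem~\ref{adic} also uses. Hence the exponent of $h$ is $d-2-(r-1)=d-1-r=5d-3-n$. Divisibility by $h^{e}$ with $h=|k-1|$ is the same as divisibility by $(k-1)^{e}$ in $\Z$, so the first assertion follows directly from \eqref{generaladic}.

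For $k=24$ we have $h=23$, which is odd and at least $3$. By the explicit residue in Theorem~\ref{adic}, $b\equiv1\pmod{23}$, so the endpoint congruence \eqref{adicendpoint} gives $[q^{5d-3}]\equiv(-1)^{d-1}\pmod{23}$. It remains to see that all earlier coefficients vanish modulo $23$:
\begin{itemize}
\item coefficients below $q^{4d-2}$ vanish identically by Theorem~\ref{adic};
\item for $4d-2\le n<5d-3$, the coefficient is divisible by $23^{5d-3-n}$, a positive power.
\end{itemize}
This gives the displayed congruence, where the error term is understood formally modulo $23$.

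There is no real obstacle. The one point needing care is the boundary case $r=0$ in the floor identity, which I handle explicitly as above.
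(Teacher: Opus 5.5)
Your proposal is correct and follows the paper's own proof: the paper likewise specializes Theorem~\ref{adic} to $a=2$, using $\lfloor r/2\rfloor+\lfloor(r-1)/2\rfloor=r-1$ and $E_d=5d-3$, and applies the theorem's final assertion with $h=23$. You additionally write out the bookkeeping that the paper leaves implicit: the $r=0$ case of the floor identity, and the vanishing of all earlier coefficients modulo $23$.
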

\begin{proof}
For $a=2$, $\lfloor r/2\rfloor+\lfloor(r-1)/2\rfloor=r-1$ and $E_d=5d-3$. Apply Theorem~\ref{adic}; its last assertion applies to $h=23$.
\end{proof}

\begin{corollary}\label{modtwoendpoint}
For $k=3$ or $k=-1$ and $a,d\ge2$, the first nonzero coefficient of
$W_{a,d}(q,k)-T(q)$ modulo two is one and occurs at
\[
 \begin{cases}
 3a(d-1)+2,&d\text{ odd},\\
 a(3d-4)+4,&d\text{ even}.
 \end{cases}
\]
In particular,
\[
 W_{2,d}(q,k)-T(q)\equiv q^{6d-4}+O(q^{6d-3})\pmod2.
\]
\end{corollary}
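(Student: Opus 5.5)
The plan is to reduce everything modulo two and apply Theorem~\ref{secondcontact} over the ring $\Z/2\Z$ with $s=3$, in the same way that the proof of Theorem~\ref{adic} reduced the cleared forms modulo $h$. First I justify the reduction. For $k=3$ or $k=-1$, $\Phi^k\in\Z[[q]]$ has constant term one. The coefficients $C_m$ in \eqref{Rdef} are obtained by inverting series with constant coefficient one, so they lie in $\Z[[q]]$. The section $W_{a,d}(q,k)$ is defined by coefficient selection \eqref{Pdef}, and $T$ is a quotient of unit series. Hence $q^{a-1}(W_{a,d}-T)$ has integer coefficients, and its reduction modulo two equals the corresponding construction carried out for $F=\Phi^k\bmod 2$ over $\Z/2\Z$.

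Next I compute the first coefficients of $F$ modulo two.
\begin{itemize}
\item For $k=3$, Jacobi's identity $\Phi(q)^3=\sum_{n\ge0}(-1)^n(2n+1)q^{n(n+1)/2}$ gives $\Phi^3\equiv1+q+q^3+q^6+\cdots\pmod 2$.
\item For $k=-1$, the partition numbers $1,1,2,3$ give $\Phi^{-1}\equiv1+q+q^3+O(q^4)\pmod 2$.
\end{itemize}
In both cases $F\equiv1-q+0\cdot q^2+f_3q^3+O(q^4)$ with $f_3\equiv1$. So the hypothesis of Theorem~\ref{secondcontact} holds over $\Z/2\Z$ with $s=3$ and $f_s=1$. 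The theorem explicitly states that its coefficient identities remain valid over any commutative ring.

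Then I read off \eqref{Podd}--\eqref{Peven} with $s=3$.
\begin{itemize}
\item For $d=2\ell+1$ with $\ell\ge1$, the leading term is $(-f_s)^\ell q^{6a\ell+2}$. Its order is $3a(d-1)+2$ and its coefficient is $(-1)^\ell\equiv1$.
\item For $d=2\ell+2$ with $\ell\ge0$, the leading term is $f_s(-f_s)^\ell q^{6a\ell+2a+4}$. Its order is $a(3d-4)+4$ and its coefficient is again $\equiv1$.
\end{itemize}
Since $1$ is nonzero in $\Z/2\Z$, the displayed term is genuinely the first nonzero coefficient modulo two, and all earlier coefficients vanish modulo two by the $O$-statements.

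Finally, for $a=2$ both cases give order $6d-4$, which yields the displayed congruence. I expect the only delicate point to be confirming that Theorem~\ref{secondcontact}'s ring version tolerates $f_2=0$ with $s=3$. Its proof only uses that $f_n=0$ for $2\le n<s$, which holds here, and it never divides by $f_s$.
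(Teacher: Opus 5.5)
Your proof is correct and follows the paper's own argument. Both reduce $\Phi^3$ and $\Phi^{-1}$ to $1+q+q^3+O(q^4)$ over $\mathbb F_2$, apply Theorem~\ref{secondcontact} with $s=3$ and $f_s=1$, and read the orders off \eqref{Podd}--\eqref{Peven}. Your explicit check that reduction modulo two commutes with the section construction is a detail the paper leaves implicit.
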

\begin{proof}
The coefficients of the Euler products give
\[
 \Phi(q)^3=1-3q+5q^3+O(q^4),\qquad
 \Phi(q)^{-1}=1+q+2q^2+3q^3+O(q^4).
\]
Both reduce to $1+q+q^3+O(q^4)$ over $\mathbb F_2$. Theorem~\ref{secondcontact}
therefore applies with $s=3$ and $f_s=1$. Substituting
$d=2\ell+1$ and $d=2\ell+2$ into \eqref{Podd}--\eqref{Peven} proves the result.
\end{proof}

\begin{example}\label{evenmodulus}
For $k=5$, one has $h=4$ and $b=-5\equiv3\pmod4$; the endpoint is nonzero
for every $d$, and for $d=2$ it equals one modulo four. At the exceptional
modulus two,
\[
 W_{2,2}(q,3)-T(q)=-6q^6+6q^7-33q^8+O(q^9).
\]
Thus the first nonzero term modulo two is $q^8$, in agreement with
Corollary~\ref{modtwoendpoint}.
\end{example}

\section{Analytic poles and section convergence}
Fix $k\in\Z_{>0}$. For $0<|q|<1$, define
\begin{equation}\label{Jdef}
 J(z,q)=\frac{\sum_{m\ge0}z^mq^{am+1}\Phi(q^{am+1})^k}
              {\sum_{m\ge0}z^mq^{a(m+1)}\Phi(q^{a(m+1)})^k}.
\end{equation}
Both sums converge locally uniformly for $|z|<|q|^{-a}$. Their quotient is meromorphic, and the denominator is not identically zero. Near $q=0$, $J(z,q)=q^{1-a}R(zq^a,q)$. Hence
\begin{equation}\label{Wanalytic}
 W_{a,d}(q,k)=\frac1d\sum_{z^d=1}J(z,q)
\end{equation}
gives a meromorphic continuation of the formal section to $0<|q|<1$. 
The sampling and coefficient variables are related by
\[
 Q=q^a,\qquad X=zQ,\qquad x=XQ=zQ^2,\qquad z_0=x_0/Q^2.
\]
The first theorem concerns poles in $z$ with $q$ fixed. The radial theorem later concerns poles in $q$ at the fixed sample $z=-1$. The denominator $M$ in \eqref{Mdef} depends on $q$ only through $Q$.

\begin{theorem}\label{analyticpole}
Fix integers $a\ge2$, $k\ge2$, and a real number $c>1/(k-1)$. For all sufficiently small nonzero $q$, the formal root in Theorem~\ref{movingpole} converges to a function analytic in $Q$ near zero. The meromorphic continuation of $z\mapsto J(z,q)$ has exactly one pole in $|z|\le c/|Q|^2$. It is simple, with
\begin{equation}\label{analyticlocation}
 z_0(q)=\frac{x_0(q)}{Q^2}
       =-\frac{1+O(Q)}{(k-1)Q^2},\qquad
 \Res_{z=z_0}J(z,q)=-\frac{q^{1-a}\varrho(q)}{Q^2}.
\end{equation}
For fixed positive $q$ sufficiently small, $z_0<-1$ and, for any fixed $r_0$ with $|z_0|<r_0<c/Q^2$,
\begin{equation}\label{geometricerror}
 W_{a,d}(q,k)-T(q)
 =\frac{q^{1-a}\varrho(q)}{x_0(q)}
          \frac{z_0(q)^{-d}}{1-z_0(q)^{-d}}+O_{q,r_0}(r_0^{-d})
 \quad(d\to\infty).
\end{equation}
The leading error has sign $(-1)^{d-1}$ for all sufficiently large $d$.
\end{theorem}
\begin{proof}
On every fixed disk $|x|\le c$, the series \eqref{Mdef} converges normally for small $|Q|$. Indeed its denominators with $n\ge2$ are bounded away from zero once $c|Q|<1/2$, and convergence follows from the convergence of $F$ in the unit disk. The same argument shows that $q^{a-1}N$ is jointly holomorphic for small $q$ and bounded $x$. Uniformly on this disk,
\[
 M(x,q)=1+(k-1)x+O(Q),\qquad
 q^{a-1}N(x,q)=kx+O(q).
\]
On $|x|=c$, the first limiting polynomial has modulus at least $(k-1)c-1>0$. Rouch\'e's theorem therefore gives exactly one zero of $M$ in the disk, counted with multiplicity. The implicit function theorem at $(x,Q)=(-1/(k-1),0)$ makes this zero simple and analytic in $Q$. Uniqueness of the formal solution identifies its Taylor series with Theorem~\ref{movingpole}.

At this zero, $q^{a-1}N\to-k/(k-1)\ne0$, so there is no numerator cancellation. The cleared quotient is a meromorphic continuation of $R(x/Q,q)$. Clearing the first Lambert factor introduces no singularity at $x=1$, since $M(1,q)=k(1-Q)\ne0$. Thus $J(z,q)=q^{1-a}N(zQ^2,q)/M(zQ^2,q)$ has precisely the asserted pole. Its residue is the negative of the coefficient of $(x_0-x)^{-1}$, divided by the scale factor $Q^2$, which proves \eqref{analyticlocation}.

For positive small $q$, reality and uniqueness give a real root with $x_0<0$, while \eqref{rho} gives $\varrho(q)>0$. Also $|z_0|>1$. The function
\[
 H(z)=J(z,q)-\frac{q^{1-a}\varrho(q)}{x_0(q)}
                         \frac1{1-z/z_0(q)}
\]
is holomorphic on $|z|\le r_0$. Cauchy's estimate and the root filter give
$d^{-1}\sum_{z^d=1}H(z)-H(0)=O_{q,r_0}(r_0^{-d})$. Averaging the geometric fraction gives \eqref{geometricerror}. Its prefactor is negative, $z_0$ is negative, and $r_0>|z_0|$, so the remainder is smaller than the displayed nonzero leading term. The sign assertion follows.
\end{proof}

\begin{remark}\label{poleconsistency}
For $k\ge2$, the pole term in \eqref{geometricerror} also recovers the formal first defect, with $d$ fixed. By Theorem~\ref{movingpole},
\[
 \varrho=\frac{kq^{1-a}}{(k-1)^2}(1+O(q)),\qquad
 x_0=-\frac{1+O(Q)}{k-1}.
\]
Consequently its expansion at $q=0$ is
\begin{equation}\label{formalprincipal}
 \frac{q^{1-a}\varrho}{x_0}
 \frac{(Q^2/x_0)^d}{1-(Q^2/x_0)^d}
 =k(1-k)^{d-1}q^{2a(d-1)+2}(1+O_d(q)).
\end{equation}
This comparison does not require uniformity of the analytic remainder as $q\to0$. Indeed the coefficient of $q^{r-a+1}$ in
$R(x/Q,q)-\varrho/(x_0-x)$ is a polynomial of degree at most $\lceil r/a\rceil$, by Theorem~\ref{movingpole}. A nonconstant term surviving the root filter has degree $m\ge d$, hence $r\ge a(m-1)+1$. Substituting $x=zQ^2$ and multiplying by $q^{1-a}$ gives exponent at least $3a(m-1)+3$. Thus the difference between $W_{a,d}-T$ and the left side of \eqref{formalprincipal} is formally $O(q^{3a(d-1)+3})$, which proves consistency with \eqref{Wfirst} independently of the large-$d$ limit.
\end{remark}

There is also a real interval that can be obtained without following a zero branch. The coefficient argument is of Enestr\"om--Kakeya type; see \cite{Melman} for the classical polynomial setting and related zero-exclusion methods. For $0<Q<1$, set
\begin{equation}\label{threshold}
 \beta_k(Q)=Q\prod_{n\ge1}(1+Q^n)^k,
 \qquad \beta_k(Q_*(k))=1.
\end{equation}
The function $\beta_k$ is strictly increasing from zero to infinity, so $Q_*(k)$ exists and is unique. The limit at $1$ follows, for example, by retaining arbitrarily many factors in the product.

\begin{theorem}\label{zerofree}
For integers $a\ge2$, $k\ge1$ and real $0<q<1$, put $Q=q^a$. The denominator of $J(z,q)$ has no zero on
\[
 |z|\le\beta_k(Q)^{-1}.
\]
Consequently every section has no pole for $0<q\le Q_*(k)^{1/a}$, and $W_{a,d}(q,k)\to T(q)$ exponentially as $d\to\infty$ at each such fixed nome. If $Q<Q_*(k)$, the error is $O_q(\beta_k(Q)^d)$.
\end{theorem}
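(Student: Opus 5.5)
Since $0<q<1$ and $Q=q^a$, the denominator of $J(z,q)$ is
\[
 D(z)=\sum_{m\ge0}z^m\,Q^{m+1}\Phi(Q^{m+1})^k
 =Q\Phi(Q)^k\sum_{m\ge0}c_m z^m,\qquad
 c_m=Q^m\frac{\Phi(Q^{m+1})^k}{\Phi(Q)^k}.
\]
The plan is an Enestr\"om--Kakeya type dominance argument: bound the tail $\sum_{m\ge1}|c_m||z|^m$ by something strictly less than $c_0=1$ on the disk $|z|\le\beta_k(Q)^{-1}$. Each $\Phi(Q^{m+1})$ is positive, so the $c_m$ are positive. I estimate them by a product comparison:
\[
 \frac{\Phi(Q^{m+1})}{\Phi(Q)}=\prod_{n\ge1}\frac{1-Q^{(m+1)n}}{1-Q^n}.
\]
For $m=1$ each factor equals $1+Q^n$. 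For general $m$ each factor is $1+Q^n+\dots+Q^{mn}$, which is at most $\prod_{i=1}^{m}(1+Q^{in})$ after expanding, but a cleaner route is the telescoping bound
\[
 c_m=\prod_{j=1}^{m}Q\frac{\Phi(Q^{j+1})^k}{\Phi(Q^{j})^k}
 \le\prod_{j=1}^{m}Q\prod_{n\ge1}(1+Q^{jn})^k\le\beta_k(Q)^m.
\]
This uses $(1-Q^{(j+1)n})/(1-Q^{jn})=1+\frac{Q^{jn}-Q^{(j+1)n}}{1-Q^{jn}}$, and one checks that this is at most $1+Q^{jn}$, equivalent to $Q^{jn}\le1$, true. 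Monotonicity in $j$ then gives $\prod_n(1+Q^{jn})\le\prod_n(1+Q^n)$.

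With $\beta=\beta_k(Q)$ we get $c_m\le\beta^m$, and moreover the inequality is strict for $m\ge2$, since $1+Q^{2n}<1+Q^n$. Hence on $|z|\le\beta^{-1}$ the termwise bound is only $|c_mz^m|\le1$, which does not give a dominant constant term, so a plain triangle inequality fails. Instead I use the Enestr\"om--Kakeya form. Put $w=\beta z$ and $b_m=c_m\beta^{-m}$, so $b_0=1$ and the $b_m$ are nonincreasing: $b_{m+1}/b_m=Q\prod_n\bigl((1-Q^{(m+2)n})/(1-Q^{(m+1)n})\bigr)^k/\beta<1$ by the same comparison. The sequence is strictly decreasing with $b_m\to0$, because $Q^m$ decays. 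Then for $|w|\le1$,
\[
 (1-w)\sum b_mw^m=b_0-\sum_{m\ge1}(b_{m-1}-b_m)w^m ,
\]
and since $\sum_{m\ge1}(b_{m-1}-b_m)=b_0$ with all differences positive, the right side vanishes only if every $w^m$ equals $1$, that is $w=1$. At $w=1$ the sum itself is positive. So $D$ has no zero on $|z|\le\beta^{-1}$.

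For the consequences: if $q\le Q_*^{1/a}$ then $\beta\le1$, so the disk contains the unit circle and all roots of unity. Thus no section has a pole, since the numerator is entire in this disk. When $\beta<1$, $J(\cdot,q)$ is holomorphic on $|z|\le r$ for some $r>1$ (any $r<\beta^{-1}$ works, with the closed disk included), and Cauchy's estimate with the root filter, as in the proof of Theorem~\ref{analyticpole}, gives $W_{a,d}-J(0,q)=O(r^{-d})$, which is arbitrarily close to $O(\beta^d)$. To get exactly $O(\beta^d)$ I would push to radius $\beta^{-1}$ itself; this works because the disk is closed and zero-free, so holomorphy extends to a slightly larger radius. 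Here $J(0,q)=q^{1-a}\Phi(q)^k/\Phi(Q)^k=T(q)$. The boundary case $\beta=1$ needs separate handling: zero-freeness on the closed unit disk again extends to a slightly larger disk by compactness, giving exponential convergence.

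I expect the main obstacle to be the monotone-ratio step, namely verifying $b_{m+1}<b_m$ uniformly via the factorwise inequality.
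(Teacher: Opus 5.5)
Your proposal is correct and follows essentially the paper's argument: the Enestr\"om--Kakeya identity for the rescaled positive coefficients, then Cauchy's estimate on $|z|=\beta_k(Q)^{-1}$ together with the root filter. The only variation is that you bound each ratio by $Q\prod_n(1+Q^{(m+1)n})^k\le\beta_k(Q)$, whereas the paper shows the ratios decrease strictly from $\beta_k(Q)$ to $Q$.

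One slip needs fixing. You have $b_1/b_0=1$ exactly, so the sequence is $b_0=b_1>b_2>\cdots$ rather than strictly decreasing, and the $m=1$ difference vanishes. Equality on $|w|=1$ therefore forces $w^m=1$ only for $m\ge2$; this still gives $w=1$, by using $m=2,3$. Two further minor points: your factorwise inequality is equivalent to $Q^{jn}\le Q^n$, not $Q^{jn}\le1$; and $b_m\to0$ because $Q/\beta_k(Q)=\prod_n(1+Q^n)^{-k}<1$.
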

\begin{proof}
Apart from the nonzero factor $Q$, the denominator is $\sum_{m\ge0}b_mz^m$, where $b_m=Q^m\Phi(Q^{m+1})^k>0$. Its successive ratios satisfy
\[
 \frac{b_{m+1}}{b_m}
 =Q\prod_{n\ge1}
 \left(\frac{1-Q^{(m+2)n}}{1-Q^{(m+1)n}}\right)^k.
\]
For each $n$, the function $(1-Q^nu)/(1-u)$ is strictly increasing for $0<u<1$. Taking $u=Q^{(m+1)n}$ shows that the ratios are strictly decreasing in $m$. The first ratio is $\beta_k(Q)$, and their limit is $Q$.

Put $r_0=\beta_k(Q)^{-1}$ and $c_m=b_mr_0^m$. Then $c_0=c_1>c_2>\cdots>0$ and $c_m\to0$ geometrically, since $Qr_0<1$. For $S(w)=\sum_{m\ge0}c_mw^m$,
\[
 (1-w)S(w)=c_0-\sum_{m\ge1}(c_{m-1}-c_m)w^m,
 \qquad \sum_{m\ge1}(c_{m-1}-c_m)=c_0.
\]
For $|w|<1$, the second sum has modulus strictly less than $c_0$. On $|w|=1$, equality sufficient for cancellation would require $w^m=1$ for every $m\ge2$, since all those differences are positive. The conditions for $m=2,3$ force $w=1$, where $S(1)>0$. Thus $S$ is nonzero on the closed unit disk, proving the stated zero-free disk in $z$.

If $Q\le Q_*(k)$, this disk contains the unit disk. Both sums defining $J$ are holomorphic in $|z|<Q^{-1}$, and $r_0<Q^{-1}$. Compactness therefore gives a slightly larger zero-free disk. Cauchy's estimate and the root filter prove exponential convergence. When $Q<Q_*(k)$, use the circle $|z|=r_0>1$ directly: the error is bounded by a constant times $r_0^{-d}/(1-r_0^{-d})$, proving the final estimate.
\end{proof}

\begin{corollary}\label{modularthreshold}
For $0<Q<1$, write $Q=e^{2\pi i\tau}$ with $\tau$ positive imaginary and put $u(\tau)=\Delta(\tau)/\Delta(2\tau)$. Then
\begin{equation}\label{betamodular}
 \beta_{24}(Q)=u(\tau)^{-1},\qquad
 \beta_k(Q)=Q^{1-k/24}u(\tau)^{-k/24}\quad(k\ge1),
\end{equation}
with positive real powers. The threshold $Q_*(24)$ is the unique positive nome for which $\Delta(\tau_*)=\Delta(2\tau_*)$, and
\begin{equation}\label{thresholdj}
 j(\tau_*)=257^3,\qquad j(2\tau_*)=17^3,
\end{equation}
where $j$ is the classical modular invariant. For $k=24$, the zero-free interval in Theorem~\ref{zerofree} is equivalently $u(\tau)\ge1$, with $Q=q^a$.
\end{corollary}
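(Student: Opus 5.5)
The plan is to reduce everything to the elementary product identity
\[
 \prod_{n\ge1}(1+Q^n)=\frac{\Phi(Q^2)}{\Phi(Q)},
\]
which holds because $(1-Q^n)(1+Q^n)=1-Q^{2n}$.

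\emph{Step 1: the formulas \eqref{betamodular}.} With $Q=e^{2\pi i\tau}$ we have $\Delta(\tau)=Q\Phi(Q)^{24}$ and $\Delta(2\tau)=Q^2\Phi(Q^2)^{24}$. Hence
\[
 u(\tau)^{-1}=\frac{Q\,\Phi(Q^2)^{24}}{\Phi(Q)^{24}}=Q\prod_{n\ge1}(1+Q^n)^{24}=\beta_{24}(Q).
\]
For general $k$, write $\beta_k(Q)=Q\bigl(\prod_{n\ge1}(1+Q^n)\bigr)^k$ and $\prod_{n\ge1}(1+Q^n)=(\beta_{24}(Q)/Q)^{1/24}$. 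This gives $\beta_k=Q^{1-k/24}u^{-k/24}$. All quantities are positive reals for $0<Q<1$, so the real powers are unambiguous.

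\emph{Step 2: the threshold and the zero-free interval.} Since $\beta_{24}$ is strictly increasing (stated before Theorem~\ref{zerofree}), the equation $\beta_{24}(Q_*)=1$ is equivalent to $u(\tau_*)=1$, that is, to $\Delta(\tau_*)=\Delta(2\tau_*)$. Uniqueness is inherited from the monotonicity of $\beta_{24}$. Likewise, $0<q\le Q_*(24)^{1/a}$ is equivalent to $Q=q^a\le Q_*$. By monotonicity this is $\beta_{24}(Q)\le1$, i.e.\ $u(\tau)\ge1$.

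\emph{Step 3: the $j$-values \eqref{thresholdj}.} I would use the classical rational parametrisation of $X_0(2)$ by the Hauptmodul $h=\Delta(2\tau)/\Delta(\tau)=u^{-1}$:
\[
 j(\tau)=\frac{(1+256h)^3}{h},\qquad j(2\tau)=\frac{(1+16h)^3}{h^2}.
\]
These identities would be cited from the modular references (e.g.\ \cite{Cox,Maier}). Alternatively they can be verified directly: both sides are modular functions on $\Gamma_0(2)$ whose only poles lie at the cusps, with matching principal parts there, so they agree. A quick check of the first one: $h=q(1+24q+\cdots)$ gives $(1+256h)^3/h=q^{-1}-24+768+O(q)=q^{-1}+744+O(q)$, as it should. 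Substituting $h=1$ then gives $j(\tau_*)=257^3$ and $j(2\tau_*)=17^3$.

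The only nonelementary point is Step 3, namely justifying the two level-two formulas for $j$. I expect to cite them, and to add the cusp-expansion check as a sanity verification rather than a full proof.
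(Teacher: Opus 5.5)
Your proposal is correct and follows the paper's proof essentially step for step: the identity $\prod_{n\ge1}(1+Q^n)=\Phi(Q^2)/\Phi(Q)$ gives \eqref{betamodular}, monotonicity gives both the threshold $u(\tau_*)=1$ and the interval $u(\tau)\ge1$, and evaluating the level-two parametrization at $u=1$ gives \eqref{thresholdj}. The one difference is in Step 3: you cite $j(2\tau)=(1+16h)^3/h^2$ as a second classical formula, whereas the paper derives it from $j(\tau)=(u+256)^3/u^2$ using the Fricke relation $u(-1/(2\tau))=4096/u(\tau)$ and $j(-1/(2\tau))=j(2\tau)$, so no second citation is needed. (Your expansion at $\infty$ is only a consistency check; to prove either identity that way you would also have to match the principal part at the cusp $0$.)
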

\begin{proof}
The product definition gives
\[
 u(\tau)=Q^{-1}\left(\frac{\Phi(Q)}{\Phi(Q^2)}\right)^{24}
        =Q^{-1}\prod_{n\ge1}(1+Q^n)^{-24},
\]
proving \eqref{betamodular}. This function decreases strictly from infinity to zero as $Q$ increases from zero to one. Hence $u=1$ characterizes the threshold and $u\ge1$ characterizes the stated interval. The classical relation
$j(\tau)=(u+256)^3/u^2$, recalled in \eqref{jrelation}, gives the first value in \eqref{thresholdj}. The eta transformation gives
$u(-1/(2\tau))=4096/u(\tau)$. Applying the same relation there and using $j(-1/(2\tau))=j(2\tau)$ yields
$j(2\tau)=(u+16)^3/u$, proving the second value.
\end{proof}

The condition $j(\tau)=257^3$ alone does not specify this point on the whole positive imaginary axis: it also holds at $-1/\tau_*$. The condition $u=1$ selects the required representative. These are algebraic modular values; no algebraicity or CM assertion about $\tau_*$ is used here.

For later use, define the single alternating denominator
\begin{equation}\label{commonD}
 \mathcal D(Q)=\sum_{m\ge0}(-1)^m Q^m\Phi(Q^{m+1})^k.
\end{equation}
For every dilation, the denominator of $J(-1,q)$ is $Q\mathcal D(Q)$. Thus its possible real poles share the same denominator zeros in the variable $Q$. Theorem~\ref{poles} proves that all sufficiently late such zeros are simple and are actual poles of every fixed even section. Theorem~\ref{zerofree} excludes all real section poles for $Q\le Q_*(k)$. Neither theorem asserts that every denominator zero is uncancelled or that one local zero branch remains simple throughout $0<Q<1$.

For a general fixed nome, we describe the fixed-nome limit when the quotient has interior poles. This is the contour interpretation of the exponentially convergent trapezoidal rule \cite{TrefethenWeideman}.

\begin{proposition}\label{contourlimit}
Let $a\ge2$ and $k\ge1$ be integers, take $F=\Phi^k$, and fix $0<|q|<1$. Suppose $J(z,q)$ has no pole on $|z|=1$. Then
\begin{equation}\label{contourformula}
 \lim_{d\to\infty}W_{a,d}(q,k)
 =\frac1{2\pi i}\int_{|z|=1}\frac{J(z,q)}z\dd z
 =T(q)+\sum_{|z_\nu|<1}\Res_{z=z_\nu}\frac{J(z,q)}z,
\end{equation}
where the sum is over the actual poles of $J$. The convergence is exponential in $d$.
\end{proposition}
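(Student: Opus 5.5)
The proof follows the discrete-sampling (trapezoidal) argument already used in Theorems~\ref{analyticpole} and \ref{zerofree}: pass from the root filter \eqref{Wanalytic} to a contour integral, then evaluate that integral by residues.

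\emph{Step 1: analyticity on an annulus.} Both sums in \eqref{Jdef} are holomorphic for $|z|<|q|^{-a}$, and $|q|^{-a}>1$. The poles of $J(\cdot,q)$ are the zeros of a holomorphic function that is not identically zero, so they are isolated. By hypothesis none lies on $|z|=1$. Hence there are $\rho<1<\rho'<|q|^{-a}$ such that $J$ has no pole in the closed annulus $\rho\le|z|\le\rho'$.

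\emph{Step 2: Laurent expansion and aliasing.} On this annulus write $J(z,q)=\sum_{n\in\Z}c_nz^n$. Cauchy's estimate on the two boundary circles gives
\[
 |c_n|\le A\rho'^{-n}\quad(n\ge0),\qquad |c_n|\le A\rho^{|n|}\quad(n<0).
\]
Averaging over the $d$th roots of unity keeps only the indices that are multiples of $d$, so
\[
 \frac1d\sum_{z^d=1}J(z,q)=c_0+\sum_{j\ne0}c_{jd}.
\]
The tail is $O(\rho'^{-d}+\rho^{d})$, which gives the first equality in \eqref{contourformula} together with exponential convergence, since
\[
 c_0=\frac1{2\pi i}\int_{|z|=1}\frac{J(z,q)}{z}\dd z.
\]

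\emph{Step 3: residues inside the unit disk.} Inside $|z|<1$ the function $J(z,q)/z$ has finitely many poles: the actual poles $z_\nu$ of $J$, and possibly $z=0$. We only need those $z_\nu$ at which $J/z$ is genuinely singular, so points where the numerator cancels the denominator are excluded. If $z=0$ coincides with some $z_\nu$, the two residues are simply merged into one term. Otherwise $J$ is holomorphic at $0$, and the residue there is $J(0,q)$.

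From \eqref{Jdef},
\[
 J(0,q)=\frac{q\Phi(q)^k}{q^a\Phi(q^a)^k}=T(q).
\]
The denominator at $z=0$ is $q^a\Phi(q^a)^k$, which is nonzero for $0<|q|<1$ because the Euler product has no zeros in the unit disk. So $0$ is never a pole of $J$, and the residue theorem yields the second equality in \eqref{contourformula}.

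The main obstacle is mild. It is to make sure the exponential rate is uniform, which requires choosing $\rho,\rho'$ strictly inside the pole-free annulus. It also requires noting that only finitely many poles lie in the unit disk, which follows from compactness and isolation of the zeros of the denominator in $|z|\le 1$.
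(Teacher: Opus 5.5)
Your proposal is correct and matches the paper's proof essentially step for step: holomorphy of $J(\cdot,q)$ on an annulus around $|z|=1$, Cauchy bounds on the Laurent coefficients giving $W_{a,d}(q,k)=b_0+O(r_0^d+s_0^{-d})$, and the residue theorem with $J(0,q)=T(q)$. Your explicit check that $z=0$ is never a pole of $J$, because $q^a\Phi(q^a)^k\ne0$, is a detail the paper leaves implicit.
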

\begin{proof}
The quotient is meromorphic in $|z|<|q|^{-a}$, a disk containing the closed unit disk. Its poles are discrete, and by hypothesis none lies on the unit circle. Hence it is holomorphic in an annulus $r<|z|<s$ with $r<1<s$. Write its Laurent expansion there as $J(z,q)=\sum_{n\in\Z}b_nz^n$. On slightly smaller bounding circles, Cauchy's estimates give
\[
 |b_n|\le C s_0^{-n}\quad(n\ge0),\qquad
 |b_{-n}|\le C r_0^n\quad(n\ge1),
 \qquad r<r_0<1<s_0<s.
\]
The expansion converges absolutely on $|z|=1$, so averaging over the $d$th roots gives
\[
 W_{a,d}(q,k)=\sum_{j\in\Z}b_{jd}
 =b_0+O(r_0^d+s_0^{-d}).
\]
The contour integral equals $b_0$. The residue theorem gives the last equality in \eqref{contourformula}, since $J(0,q)=T(q)$ and there are only finitely many poles inside the unit circle.
\end{proof}

Theorems~\ref{analyticpole}, \ref{zerofree}, and \ref{CMtheorem} give proved regimes in which the residue sum is empty. The first also identifies the leading geometric error at a fixed sufficiently small nome. Proposition~\ref{contourlimit} explains the additional residues at a general nome. These analytic distinctions coexist with the exact formal contact of Theorem~\ref{contact}.

\section{Radial limits and pole geometry}
Fix $k\in\Z_{>0}$. In this section write $q=e^{-t/a}$, with $t>0$, and put
\begin{equation}\label{Ganalytic}
 G(v)=e^{-v}\Phi(e^{-v})^k\quad(v>0),\qquad G(v)=0\quad(v\le0),
 \qquad \widehat G(\xi)=\int_{\mathbb R}G(v)e^{-2\pi i\xi v}\dd v.
\end{equation}

\begin{lemma}\label{Gschwartz}
For $\Re v>0$,
\begin{equation}\label{Gtransform}
 G(v)=\left(\frac{2\pi}{v}\right)^{k/2}
 \exp\!\left[-\left(1-\frac{k}{24}\right)v-\frac{\pi^2k}{6v}\right]
 \Phi(e^{-4\pi^2/v})^k.
\end{equation}
The real function in \eqref{Ganalytic} is a Schwartz function, and $\widehat G(0)>0$.
\end{lemma}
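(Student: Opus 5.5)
The plan is to derive \eqref{Gtransform} from the Dedekind eta transformation and then read off the Schwartz property and the sign of $\widehat G(0)$ from the two representations of $G$. Write $e^{-v}=e^{2\pi i\tau}$ with $\tau=iv/(2\pi)$, so that $\Re v>0$ corresponds to $\tau$ in the upper half-plane. The classical identity $\eta(-1/\tau)=\sqrt{-i\tau}\,\eta(\tau)$ with $\eta(\tau)=e^{\pi i\tau/12}\Phi(e^{2\pi i\tau})$ gives, since $-1/\tau=2\pi i/v$ corresponds to the nome $e^{-4\pi^2/v}$,
\[
 e^{-\pi^2/(6v)}\Phi(e^{-4\pi^2/v})=\sqrt{v/(2\pi)}\,e^{-v/24}\Phi(e^{-v}).
\]
Here the principal branch is correct because both sides are positive for real $v>0$ and the identity continues analytically to $\Re v>0$. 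Raising this to the $k$th power and multiplying by $e^{-v}$ yields \eqref{Gtransform}, after collecting $e^{-v}e^{kv/24}=e^{-(1-k/24)v}$.

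For the Schwartz property I will treat the real function on three regions. For $v\le0$ it vanishes identically. For $v\to+\infty$, the product satisfies $\Phi(e^{-v})^k=1+O(e^{-v})$, and each derivative of $\Phi(e^{-v})^k$ is $O(e^{-v})$, by termwise differentiation of the $q$-expansion. Hence $G$ and all its derivatives decay like $e^{-v}$. The main obstacle is smoothness at $v=0$, where I must show that every derivative of $G$ tends to $0$ as $v\downarrow0$. Here I use \eqref{Gtransform}: it writes $G(v)=v^{-k/2}e^{-\pi^2k/(6v)}\cdot A(v)$, where $A(v)=(2\pi)^{k/2}e^{-(1-k/24)v}\Phi(e^{-4\pi^2/v})^k$. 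The factor $\Phi(e^{-4\pi^2/v})^k$ is $1+O(e^{-4\pi^2/v})$, and its $j$th derivative is bounded by a polynomial in $1/v$ times $e^{-4\pi^2/v}$, again by termwise differentiation. Consequently each derivative of $G$ on $(0,1]$ is a finite sum of terms bounded by $v^{-N}e^{-\pi^2k/(6v)}$, all of which tend to zero. So $G$ extends to a $C^\infty$ function that is flat at $0$, and together with the decay at infinity this makes $G$ a Schwartz function.

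Finally, $\widehat G(0)=\int_0^\infty e^{-v}\Phi(e^{-v})^k\dd v$. This integral converges absolutely, and its integrand is strictly positive on $(0,\infty)$ because every factor $1-e^{-nv}$ is positive there. Therefore $\widehat G(0)>0$.
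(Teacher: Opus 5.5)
Your proposal is correct and follows the paper's proof essentially step for step: the eta transformation at $\tau=iv/(2\pi)$ gives \eqref{Gtransform}, the factor $e^{-\pi^2k/(6v)}$ in the transformed expression makes every derivative vanish as $v\downarrow0$, the original product gives exponential decay at infinity, and positivity of the integrand gives $\widehat G(0)>0$. Your termwise bounds on the derivatives of $\Phi(e^{-4\pi^2/v})^k$ just spell out what the paper states more briefly as boundedness after multiplication by a suitable power of $v$.
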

\begin{proof}
With $\tau=iv/(2\pi)$, the classical transformation
$\eta(-1/\tau)=(-i\tau)^{1/2}\eta(\tau)$ gives
\[
 \Phi(e^{-v})=\left(\frac{2\pi}{v}\right)^{1/2}
 e^{v/24-\pi^2/(6v)}\Phi(e^{-4\pi^2/v}).
\]
The identity holds for positive $v$ and then throughout $\Re v>0$ by analytic continuation, with principal powers. Raising to the $k$th power proves \eqref{Gtransform}. The transformation and its cusp consequences are classical; see \cite{Apostol,Rademacher}.

As $v\downarrow0$, the factor $e^{-\pi^2k/(6v)}$ dominates every power of $v^{-1}$. The remaining product, and each of its derivatives after multiplication by a suitable power of $v$, are bounded. Thus $G$ and all its right derivatives vanish at zero. As $v\to+\infty$, the defining product shows exponential decay of every derivative. The zero extension is therefore Schwartz. Positivity on $(0,\infty)$ gives $\widehat G(0)>0$. This is the Laplace transform of $\Phi(e^{-v})^k$ at $1$; for $k=3$, Lemma~\ref{cubictransform} gives $2\pi/\cosh(\pi\sqrt7/2)$.
\end{proof}

The derivative estimate in the next lemma is needed to distinguish simple poles from mere sign changes.

\begin{lemma}\label{Fourier}
Put
\begin{equation}\label{Bk}
 B_k=\sqrt\pi\,(2\pi)^{(3k-3)/4}
                \left(\frac{\pi^2k}{6}\right)^{(1-k)/4}>0.
\end{equation}
As $\xi\to+\infty$,
\begin{equation}\label{Fourierexp}
 \widehat G(\xi)=B_k\xi^{(k-3)/4}
 \exp\!\left[-(1+i)\pi^{3/2}\sqrt{\frac{2k\xi}{3}}
                 +\frac{(k-3)\pi i}{8}\right](1+\varepsilon(\xi)),
\end{equation}
where
\begin{equation}\label{Fourierderror}
 \varepsilon(\xi)=O(\xi^{-1/2}),\qquad
 \varepsilon'(\xi)=O(\xi^{-3/2}).
\end{equation}
The first expansion holds uniformly in some fixed complex sector about the positive real axis, for the analytic continuation defined by a rotated integral.
\end{lemma}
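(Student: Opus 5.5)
The plan is a saddle-point (Laplace-method) analysis of $\widehat G(\xi)=\int_0^\infty G(v)e^{-2\pi i\xi v}\dd v$, using the modular form \eqref{Gtransform} of $G$ from Lemma~\ref{Gschwartz}.

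\textbf{Step 1: rotate the contour.} The integrand is holomorphic in $\Re v>0$. On the quarter-arcs joining the positive axis to the ray $\arg v=-\pi/4$ it is negligible at both ends:
\begin{itemize}
\item near $0$, the factor $\exp(-\pi^2k/(6v))$ decays, because $\Re(1/v)>0$ and $\Phi(e^{-4\pi^2/v})^k$ stays bounded;
\item near $\infty$, the factor $e^{-v}\Phi(e^{-v})^k$ is bounded and $|e^{-2\pi i\xi v}|=e^{2\pi\xi\Im v}\le1$ for $\Im v\le0$, while $e^{-v}$ supplies decay.
\end{itemize}
Hence $\widehat G(\xi)=\int_{\arg v=-\pi/4}G(v)e^{-2\pi i\xi v}\dd v$. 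The same rotated integral defines the continuation of $\widehat G$ to a small complex sector $|\arg\xi|<\theta_0$: tilt the ray slightly with $\arg\xi$ so that the phase keeps a definite sign.

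\textbf{Step 2: locate the saddle.} Substituting \eqref{Gtransform}, the phase is
\[
 \psi(v)=-\frac{\pi^2k}{6v}-2\pi i\xi v .
\]
Solving $\psi'(v)=0$ gives
\[
 v_0=\sqrt{\frac{\pi k}{12\xi}}\,e^{-i\pi/4},
\]
which lies on the chosen ray. At the saddle,
\[
 \psi(v_0)=-2\sqrt{\frac{\pi^3ik\xi}{3}}=-(1+i)\pi^{3/2}\sqrt{\frac{2k\xi}{3}},
 \qquad
 \psi''(v_0)=-\frac{\pi^2k}{3v_0^3}.
\]

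\textbf{Step 3: Laplace's method.} Rescale $v=v_0(1+w)$. Since $|v_0|\asymp\xi^{-1/2}$, the remaining factors are slowly varying near the saddle:
\begin{itemize}
\item $e^{-(1-k/24)v}=1+O(\xi^{-1/2})$;
\item $\Phi(e^{-4\pi^2/v})^k=1+O(e^{-c\sqrt\xi})$, because $\Re(1/v_0)\asymp\sqrt\xi$;
\item $(2\pi/v)^{k/2}$ is a power of $v_0$ times $(1+w)^{-k/2}$.
\end{itemize}
The rescaled exponent is $\psi(v_0)+(\pi^2k/(6v_0))\,\phi(w)$ with $\phi(w)=-w^2+O(w^3)$. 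Its quadratic coefficient has modulus $\asymp\sqrt\xi$, so the standard Laplace expansion gives a relative error $O(\xi^{-1/2})$. The tails $|w|\ge\xi^{-1/4+\epsilon}$ are exponentially smaller, since $\Re\psi$ has its unique maximum on the ray at $v_0$.

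The leading term is
\[
 (2\pi/v_0)^{k/2}\sqrt{2\pi/(-\psi''(v_0))}\,e^{\psi(v_0)} .
\]
Collecting the powers of $\xi$ gives $\xi^{k/4}\cdot\xi^{-3/4}=\xi^{(k-3)/4}$. Collecting the arguments gives $(e^{i\pi/4})^{k/2}$ times the Gaussian phase $e^{-3i\pi/8}$, which is $e^{(k-3)\pi i/8}$. Collecting the real constants should reproduce $B_k$ in \eqref{Bk}. I would verify that constant carefully; it is the only bookkeeping-sensitive part of this step.

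\textbf{Step 4: uniformity and the derivative bound.} The estimate is uniform when $\xi$ ranges over a fixed sector $|\arg\xi|<\theta_0$: the saddle moves analytically and the ray is tilted accordingly. This gives the uniform sector statement and $\varepsilon(\xi)=O(|\xi|^{-1/2})$ there. Since the leading factor in \eqref{Fourierexp} is analytic and nonvanishing, $\varepsilon$ is holomorphic in the sector. For real large $\xi$, apply Cauchy's estimate on the disk of radius $\asymp\xi$ centered at $\xi$, which lies inside the sector. This yields $\varepsilon'(\xi)=O(\xi^{-1/2}/\xi)=O(\xi^{-3/2})$, which is \eqref{Fourierderror}.

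I expect the main obstacle to be the uniform control of the rotated integral in the sector. That control is what converts the pointwise saddle estimate into the derivative bound. Its key input is a global bound showing that $\Re\psi$ along the tilted ray is maximized only at the saddle, with quadratic decrease away from it.
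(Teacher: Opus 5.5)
Your proposal is correct, and the constant comes out right. With $|v_0|=\sqrt{A/(2\pi\xi)}$ and $A=\pi^2k/6$, the modulus of $(2\pi/v_0)^{k/2}\sqrt{2\pi/(-\psi''(v_0))}$ is $\sqrt\pi(2\pi)^{(3k-3)/4}A^{(1-k)/4}\xi^{(k-3)/4}$, which is $B_k\xi^{(k-3)/4}$.

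The paper shares your first and last steps: rotation to $\arg v=-\pi/4$, and Cauchy's estimate on a disk of radius $\asymp\xi$ for $\varepsilon'$. In between, it replaces your hand-made saddle-point analysis by an exact evaluation. It drops $\Phi(e^{-4\pi^2/v})^k$ and absorbs $e^{-(1-k/24)v}$ into $b=1-k/24+2\pi i\xi$. It then recognizes the resulting integral as $2(2\pi)^{k/2}(A/b)^{(2-k)/4}K_{k/2-1}(2\sqrt{Ab})$ and quotes the uniform large-argument expansion of $K_\nu$ on closed sectors. The discarded product is bounded by a second $K$-type integral, with $A$ replaced by $A+4\pi^2$, for $r\le1$, and by a crude exponential bound for $r\ge1$. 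The paper also keeps a fixed ray for complex $\xi$ rather than tilting it with $\arg\xi$.

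The Bessel route hands off exactly what you flag as your main obstacle: uniformity of the Gaussian approximation with complex large parameter $A/v_0$ across a sector. That becomes a citeable special-function expansion. It also treats the linear factor exactly, including when $k>24$. And it gives the full asymptotic series at no extra cost, which the paper uses in Corollary~\ref{secondpolelaw} to obtain the second-order constant $\alpha_k$.

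Your direct Laplace method is self-contained and would still work for amplitudes that do not reduce to a Bessel integral. The price is that you must prove the sector-uniform Laplace estimate and the tail bounds yourself. For the far tail, it is simplest to revert to the original form $e^{-v}\Phi(e^{-v})^k$, as in your Step~1, rather than bounding the transformed product termwise.
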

\begin{proof}
For this proof put $A=\pi^2k/6$, $c=1-k/24$, and $b=c+2\pi i\xi$. Rotate the path in \eqref{Ganalytic} to $v=re^{-i\pi/4}$, $r>0$. At the origin, \eqref{Gtransform} bounds the integrand by a power of $r^{-1}$ times $e^{-A/(\sqrt2r)}$. At infinity, the original product gives exponential decay throughout the closed sector $-\pi/4\le\arg v\le0$, while $|e^{-2\pi i\xi v}|\le1$ for real $\xi>0$. The connecting arcs therefore vanish.

On the rotated path, replace the final product in \eqref{Gtransform} by $1$. The resulting integral is
\begin{equation}\label{I0}
 I_0(\xi)=2(2\pi)^{k/2}
       (A/b)^{(2-k)/4}K_{k/2-1}(2\sqrt{Ab}).
\end{equation}
For positive real $b$ this is the usual integral representation of $K$, and continuation gives it in the sector $-\pi/4<\arg b<3\pi/4$, where $\Re(be^{-i\pi/4})>0$ and the rotated integral converges. This condition holds for all sufficiently large positive $\xi$, also when $k>24$ and $c<0$. The large-argument expansion
\[
 K_\nu(z)=\sqrt{\frac\pi{2z}}e^{-z}(1+O(z^{-1}))
\]
is uniform on closed subsectors containing the arguments used here; see \cite[\S\S10.32, 10.40]{DLMF}. Substitution into \eqref{I0} gives
\[
 I_0(\xi)=(2\pi)^{k/2}\sqrt\pi\,
 A^{(1-k)/4}b^{(k-3)/4}e^{-2\sqrt{Ab}}(1+O(|\xi|^{-1/2})).
\]
Since $b=2\pi i\xi(1+O(\xi^{-1}))$, this is \eqref{Fourierexp} with the constant \eqref{Bk}, up to the product remainder. In particular $B_3=4\pi$.

Let $\omega(v)=\Phi(e^{-4\pi^2/v})^k-1$. On the rotated path and for $0<r\le1$, the elementary product bound gives
\[
 |\omega(re^{-i\pi/4})|\le C_k e^{-4\pi^2/(\sqrt2r)}.
\]
For $r\ge1$, it gives $|\omega(re^{-i\pi/4})|\le C_ke^{C_kr}$. To see both estimates, put $u=e^{-4\pi^2/(\sqrt2r)}$ and use
\[
 |\omega(v)|\le\prod_{n\ge1}(1+u^n)^k-1
            \le\exp\!\left(\frac{ku}{1-u}\right)-1.
\]
For large complex $\xi$ in a sufficiently small sector about the positive axis, $\beta=\Re(be^{-i\pi/4})$ is positive and comparable to $|\xi|$. The part of $\widehat G-I_0$ with $r\le1$ is bounded by
\begin{equation}\label{Fouriertail}
 C_k\int_0^\infty r^{-k/2}
       e^{-(A+4\pi^2)/(\sqrt2r)-\beta r}\dd r.
\end{equation}
The positive-real $K$ integral bounds \eqref{Fouriertail} by a power of $|\xi|$ times
\[
 \exp\!\left[-2\sqrt{(A+4\pi^2)\beta/\sqrt2}\right].
\]
On the positive real axis its exponential rate is
$2\sqrt{\pi(A+4\pi^2)\xi}$, strictly larger than the rate $2\sqrt{\pi A\xi}$ of $I_0$. The two rates depend continuously on $\arg\xi$. Shrinking the sector preserves a fixed positive gap, so \eqref{Fouriertail} divided by the modulus of the leading term in \eqref{Fourierexp} is $O(e^{-c_k\sqrt{|\xi|}})$. For $r\ge1$, the above bound for $\omega$ gives $O(e^{-c_k|\xi|})$.

The rotated integral defines an analytic continuation in this sector. We have proved that its relative error in \eqref{Fourierexp} is analytic and uniformly $O(|\xi|^{-1/2})$. For large real $\xi$, a disk of radius $\delta\xi$ lies in a slightly larger admissible sector. Cauchy's derivative estimate on this disk gives $\varepsilon'(\xi)=O(\xi^{-3/2})$. This proves \eqref{Fourierderror}, with the uniform remainder control used in classical asymptotic analysis \cite{Olver,Wong}.
\end{proof}

\begin{lemma}\label{twists}
For real $\alpha$ and $0<c\le1$, let
\[
 S(c,\alpha;t)=\sum_{m\in\Z}e^{2\pi i\alpha m}G(t(m+c)).
\]
Then
\begin{equation}\label{Poisson}
 S(c,\alpha;t)=\frac1t\sum_{n\in\Z}
 e^{2\pi i(n-\alpha)c}\widehat G((n-\alpha)/t).
\end{equation}
For each fixed $-1/2<\alpha<1/2$, the denominator of $J(e^{2\pi i\alpha},e^{-t/a})$ is nonzero for all sufficiently small $t>0$, and
\begin{equation}\label{singlelimit}
 J(e^{2\pi i\alpha},e^{-t/a})
 =e^{2\pi i(a-1)\alpha/a}+O(e^{-c_0/\sqrt t})
\end{equation}
for every $c_0$ with $0<c_0<c_k(\alpha)$, where
\begin{equation}\label{twistgap}
 c_k(\alpha)=\pi^{3/2}\sqrt{\frac{2k}{3}}
       \left(\sqrt{1-|\alpha|}-\sqrt{|\alpha|}\right).
\end{equation}
The estimate is for each fixed $\alpha$ and is not uniform as $|\alpha|\to1/2$.
\end{lemma}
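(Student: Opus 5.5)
The proof has three parts: the Poisson identity for $S(c,\alpha;t)$, a Fourier-side estimate for the numerator and denominator of $J$, and the limit obtained by dividing them.

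\emph{Poisson identity.} Apply the Poisson summation formula to the Schwartz function $v\mapsto G(t(v+c))e^{2\pi i\alpha v}$ (Lemma~\ref{Gschwartz}). Its Fourier transform at $n$ is $t^{-1}e^{2\pi i(n-\alpha)c}\widehat G((n-\alpha)/t)$, which gives \eqref{Poisson} directly.

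\emph{Writing $J$ through $S$.} Since $G$ vanishes on $(-\infty,0]$, only $m\ge0$ contribute when $0<c\le1$. With $Q=e^{-t}$ we have $q^{am+1}\Phi(q^{am+1})^k=G(t(m+1/a))$ and $q^{a(m+1)}\Phi(q^{a(m+1)})^k=G(t(m+1))$. Hence
\[
 J(e^{2\pi i\alpha},e^{-t/a})=\frac{S(1/a,\alpha;t)}{e^{-2\pi i\alpha}S(1,\alpha;t)},
\]
where the factor $e^{-2\pi i\alpha}$ comes from reindexing the denominator.

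\emph{Fourier-side asymptotics.} On the right of \eqref{Poisson}, the frequency $\xi=(n-\alpha)/t$ of smallest modulus is the $n=0$ term, with $|\xi|=|\alpha|/t$. The runner-up has $|\xi|=(1-|\alpha|)/t$. By Lemma~\ref{Fourier}, applied to $\widehat G(-\xi)=\overline{\widehat G(\xi)}$ for negative frequencies since $G$ is real, each term has modulus $\exp(-\pi^{3/2}\sqrt{2k|\xi|/3}\,(1+o(1)))$ up to powers. If $\alpha=0$, the main term is $\widehat G(0)/t>0$ and the other terms are exponentially small in $1/\sqrt t$.

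The ratio of the runner-up to the main term is $O(e^{-c_0/\sqrt t})$ for any $c_0<c_k(\alpha)$, with $c_k(\alpha)$ as in \eqref{twistgap}. The remaining frequencies have $|n-\alpha|\ge1+|\alpha|$ or larger. Their sum is dominated using the monotone decay in Lemma~\ref{Fourier} and a comparison with a convergent series in $\sqrt{|n|}$. Since $|\widehat G|\le\widehat G(0)$ and the expansion holds for large $|\xi|$, all tail terms with $|n|\ge2$ are uniformly controlled.

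\emph{Dividing.} We get
\[
 S(c,\alpha;t)=t^{-1}e^{-2\pi i\alpha c}\widehat G(-\alpha/t)\,\bigl(1+O(e^{-c_0/\sqrt t})\bigr),
\]
where the error constant depends on $c$ only through bounded phases, so it is uniform for $c\in\{1/a,1\}$. The main term $\widehat G(-\alpha/t)$ is nonzero for small $t$: for $\alpha\ne0$ by \eqref{Fourierexp}, and for $\alpha=0$ because $\widehat G(0)>0$. Hence the denominator $S(1,\alpha;t)$ is nonzero. In the quotient the transform $\widehat G(-\alpha/t)$ cancels, leaving
\[
 J=\frac{e^{-2\pi i\alpha/a}}{e^{-2\pi i\alpha}e^{-2\pi i\alpha}}\bigl(1+O(e^{-c_0/\sqrt t})\bigr).
\]

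This prefactor has to be checked against the target $e^{2\pi i(a-1)\alpha/a}$, and the reindexing is the place where a phase slip is likely. Writing the denominator as $\sum_{m\ge0}e^{2\pi i\alpha m}G(t(m+1))$ exactly matches $S(1,\alpha;t)$, so no extra $e^{-2\pi i\alpha}$ factor is present. The quotient is then $e^{-2\pi i\alpha/a}/e^{-2\pi i\alpha}=e^{2\pi i\alpha(a-1)/a}$, as claimed. I would recheck this bookkeeping carefully when writing the proof.

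\emph{Main obstacle.} The hardest step is controlling the tail over all $n$, given that Lemma~\ref{Fourier} is only asymptotic. My plan is to split the sum. For $|n-\alpha|/t\ge\xi_0$, use \eqref{Fourierexp} and compare with $\sum_n e^{-c\sqrt{|n|/t}}$. Frequencies with $|n-\alpha|/t<\xi_0$ do not occur once $t$ is small. This gives the rate $c_k(\alpha)$ with any smaller $c_0$. The non-uniformity as $|\alpha|\to1/2$ comes from the gap $\sqrt{1-|\alpha|}-\sqrt{|\alpha|}$ vanishing there.
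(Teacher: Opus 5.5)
Your proposal is correct and follows essentially the same route as the paper: Poisson summation applied to $e^{2\pi i\alpha v}G(t(v+c))$, and identification of the numerator and denominator of $J$ with $S(1/a,\alpha;t)$ and $S(1,\alpha;t)$. As in the paper, the $n=0$ frequency dominates, the runner-up at $|n-\alpha|=1-|\alpha|$ produces the gap $c_k(\alpha)$, an integral comparison handles the remaining tail, and $\widehat G(-\alpha/t)$ cancels in the quotient.

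Delete the spurious factor $e^{-2\pi i\alpha}$ from your first display and from the displayed quotient in the ``Dividing'' paragraph. As you verify yourself, $G(v)=0$ for $v\le0$ makes the denominator equal to $S(1,\alpha;t)$ exactly, so the phase ratio is $e^{-2\pi i\alpha/a}/e^{-2\pi i\alpha}=e^{2\pi i(a-1)\alpha/a}$.
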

\begin{proof}
Apply Poisson summation to the Schwartz function
$h(x)=e^{2\pi i\alpha x}G(t(x+c))$. Its Fourier transform at an integer $n$ is
$t^{-1}e^{2\pi i(n-\alpha)c}\widehat G((n-\alpha)/t)$, proving \eqref{Poisson}. The hypotheses for this form of Poisson summation are supplied by Lemma~\ref{Gschwartz}; see also \cite{SteinShakarchi}.

The numerator and denominator of $J$ are respectively $S(1/a,\alpha;t)$ and $S(1,\alpha;t)$. If $0<|\alpha|<1/2$, the unique smallest frequency in absolute value is $n=0$. Lemma~\ref{Fourier}, together with $\widehat G(-\xi)=\overline{\widehat G(\xi)}$ for real $\xi$, shows that this term is nonzero for small $t$. The other frequencies satisfy $|n-\alpha|\ge1-|\alpha|>|\alpha|$. Their sum divided by the dominant term is bounded by a power of $t^{-1}$ times
\[
 \exp\!\left[-\frac{\pi^{3/2}\sqrt{2k/3}}{\sqrt t}
             \bigl(\sqrt{1-|\alpha|}-\sqrt{|\alpha|}\bigr)\right].
\]
The remaining sum over $n$ is bounded by comparison with the integral of a polynomial times $e^{-c\sqrt{x/t}}$. Decreasing the positive exponential constant absorbs the power of $t^{-1}$. Division of the dominant phase factors proves \eqref{singlelimit}. If $\alpha=0$, the dominant term is $\widehat G(0)/t>0$, and the same tail estimate applies to $n\ne0$.
\end{proof}

Put
\begin{equation}\label{phase}
 \Theta(t)=\pi^{3/2}\sqrt{\frac{k}{3t}},\qquad
 \phi=\frac{(k-3)\pi}{8}.
\end{equation}
Denote the numerator and denominator of the alternating quotient by
\[
 U_-(t)=qU(-Q,q)=S(1/a,1/2;t),\qquad
 V_-(t)=QV(-Q,q)=S(1,1/2;t).
\]
Then $V_-(t)=Q\mathcal D(Q)$ with $Q=e^{-t}$ and $\mathcal D$ as in \eqref{commonD}; its zero set is independent of $a$ and $d$.

\begin{lemma}\label{alternating}
There is an explicit positive amplitude
\[
 a_k(t)=\frac{2B_k}{t}(2t)^{-(k-3)/4}e^{-\Theta(t)}
\]
such that
\begin{align}
 V_-(t)&=-a_k(t)\{\cos(\Theta(t)-\phi)+\epsilon(t)\},\label{Dminus}\\
 U_-(t)&= a_k(t)\{\cos(\Theta(t)-\phi-\pi/a)+\epsilon_a(t)\},\label{Nminus}
\end{align}
where
\begin{equation}\label{Eest}
 \epsilon(t),\epsilon_a(t)=O(\sqrt t),\qquad \epsilon'(t),\epsilon_a'(t)=O(t^{-1}).
\end{equation}
\end{lemma}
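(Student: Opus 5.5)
Both sums are handled by the Poisson formula \eqref{Poisson} with $\alpha=1/2$, using $c=1$ for $V_-$ and $c=1/a$ for $U_-$:
\[
 S(c,1/2;t)=\frac1t\sum_{n\in\Z}e^{2\pi i(n-1/2)c}\widehat G\bigl((n-1/2)/t\bigr).
\]
The two smallest frequencies are $n=1$ and $n=0$, at $\xi=\pm1/(2t)$, and they have equal modulus. Since $G$ is real, $\widehat G(-\xi)=\overline{\widehat G(\xi)}$, so these two terms are complex conjugates. Their sum is therefore
\[
 \frac2t\,\Re\Bigl[e^{\pi ic}\,\widehat G(1/(2t))\Bigr].
\]

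Insert Lemma~\ref{Fourier} with $\xi=1/(2t)$. Then $\pi^{3/2}\sqrt{2k\xi/3}=\Theta(t)$ and $\xi^{(k-3)/4}=(2t)^{-(k-3)/4}$, which gives
\[
 \frac2t\widehat G(1/(2t))=a_k(t)\,e^{-i(\Theta-\phi)}(1+\varepsilon).
\]
Taking real parts after multiplying by $e^{\pi ic}$ produces $a_k\cos(\Theta-\phi-\pi c)$ plus $a_k\Re[e^{i(\pi c-\Theta+\phi)}\varepsilon]$. For $c=1$ the cosine becomes $-\cos(\Theta-\phi)$, which is \eqref{Dminus}. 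For $c=1/a$ it is \eqref{Nminus}. So $\epsilon$ and $\epsilon_a$ consist of this real-part term divided by $a_k$ (with a sign for $V_-$), plus the contribution of the frequencies $|n-1/2|\ge3/2$ divided by $a_k$.

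For the size bound, the main remainder is $O(|\varepsilon(1/(2t))|)=O(\sqrt t)$ by \eqref{Fourierderror}. For the other frequencies, Lemma~\ref{Fourier} gives $|\widehat G(\xi)|\ll\xi^{(k-3)/4}e^{-\pi^{3/2}\sqrt{2k\xi/3}}$ for large real $\xi$. The ratio of these terms to $a_k$ is then at most a power of $t^{-1}$ times $e^{-(\sqrt3-1)\Theta(t)}$, and summing over $n$ by comparison with an integral, exactly as in Lemma~\ref{twists}, keeps this exponentially small. This proves $\epsilon,\epsilon_a=O(\sqrt t)$.

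The derivative bound is the delicate step. Differentiate the exact identity
\[
 \epsilon(t)=\mp\Re\bigl[e^{i(\pi c-\Theta(t)+\phi)}\varepsilon(1/(2t))\bigr]+\text{tail}/a_k .
\]
The phase factor contributes $\Theta'(t)\,\varepsilon=O(t^{-3/2})\cdot O(\sqrt t)=O(t^{-1})$. The chain rule gives $\varepsilon'(1/(2t))\cdot(2t^2)^{-1}=O(t^{3/2})\,t^{-2}=O(t^{-1/2})$. For the tail, I would differentiate term by term, since $\partial_t$ of $\widehat G(\xi/t)$ equals $-\xi t^{-2}\widehat G'(\xi/t)$. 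I would bound $\widehat G'$ on the real axis by Cauchy's estimate in the sector of Lemma~\ref{Fourier}, or directly as the transform of $-2\pi i vG(v)$ through the same rotated contour. Differentiating $1/a_k$ costs at most a factor $O(\Theta')$. Every such factor is polynomial in $t^{-1}$ and is absorbed by the exponential gap, so the tail derivative is exponentially small. Together these give $\epsilon',\epsilon_a'=O(t^{-1})$.

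The only real obstacle is obtaining this uniform derivative control of the tail. The rest of the argument is bookkeeping of the constant $B_k$ and the phase $\phi$.
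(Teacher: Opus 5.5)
Your proposal is correct and follows the paper's proof essentially step for step: Poisson summation at $\alpha=1/2$, pairing the conjugate frequencies $n=0,1$ into $\frac2t\Re[e^{\pi ic}\widehat G(1/(2t))]$, inserting the expansion of Lemma~\ref{Fourier}, using the exponential gap $e^{-(\sqrt3-1)\Theta}$ for the remaining frequencies, and bounding the derivative by the phase term $\Theta'\varepsilon=O(t^{-1})$, the chain-rule term $O(t^{-1/2})$, and termwise differentiation of the tail. The only difference is how you bound $\widehat G'$: the paper differentiates the expansion of Lemma~\ref{Fourier}, while a Cauchy estimate on a disk of radius $\delta\xi$ may lose a factor $e^{O(\delta)\sqrt\xi}$, which is harmless here because that gap absorbs it for small $\delta$.
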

\begin{proof}
In \eqref{Poisson}, the terms $n=0,1$ give
\[
 V_-(t)=-\frac2t\Re\widehat G(1/(2t))+\text{tail},\qquad
 U_-(t)=\frac2t\Re\{e^{i\pi/a}\widehat G(1/(2t))\}+\text{tail}.
\]
The tails involve frequencies $n/(2t)$ with positive odd $n\ge3$. Formula \eqref{Fourierexp} gives \eqref{Dminus}--\eqref{Nminus}. After division by $a_k(t)$, the tails are $O(e^{-c/\sqrt t})$, since their first exponential rate is $\sqrt3\Theta(t)$.

For the derivative estimates, the relative error in the main pair is a real part of $e^{-i(\Theta-\phi)}\varepsilon(1/(2t))$, with a constant phase factor in the numerator. Since $\Theta'(t)=-\Theta(t)/(2t)$, Lemma~\ref{Fourier} bounds its derivative by $O(t^{-1})+O(t^{-1/2})$. Differentiating \eqref{Fourierexp} also gives
\[
 |\widehat G'(\xi)|\le C_k\xi^{(k-3)/4-1/2}
                  e^{-\pi^{3/2}\sqrt{2k\xi/3}}
\]
for large positive $\xi$. This bound justifies termwise differentiation of the tails on compact $t$-intervals and, after division by $a_k$, bounds their derivatives by a power of $t^{-1}$ times $e^{-c/\sqrt t}$. These are $O(t^{-1})$ as $t\downarrow0$.
\end{proof}

\begin{theorem}\label{radial}
Fix integers $a\ge2$, $d\ge2$, and $k\ge1$. If $d$ is odd, then
\begin{equation}\label{oddlimit}
 W_{a,d}(e^{-t/a},k)
 =\frac{\sin(\pi/a)}{d\sin(\pi(a-1)/(ad))}
       +O(e^{-c/\sqrt t}).
\end{equation}
If $d$ is even, then
\begin{equation}\label{tangent}
 W_{a,d}(e^{-t/a},k)
 =\frac{\sin(\pi/a)}d
   \left\{\cot\frac{\pi(a-1)}{ad}-\tan(\Theta(t)-\phi)\right\}
       +O(\sqrt t/\delta^2)
\end{equation}
uniformly for $0<\delta\le1$ on $|\cos(\Theta(t)-\phi)|\ge\delta$, provided $\sqrt t\le c\delta$ for a sufficiently small constant $c=c(a,d,k)>0$. The constant in the $O$ term is independent of $\delta$.
\end{theorem}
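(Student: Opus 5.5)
We start from the root filter \eqref{Wanalytic}, with $q=e^{-t/a}$ and samples $z=e^{2\pi i j/d}$, $j=0,\dots,d-1$. Take the representative $\alpha_j=j/d$ reduced into $(-1/2,1/2]$; this does not change $z$. For every $j$ with $\alpha_j\ne1/2$, Lemma~\ref{twists} gives
\[
 J(z,q)=e^{2\pi i(a-1)\alpha_j/a}+O(e^{-c_0/\sqrt t}),
\]
with $c_0>0$ depending on $\alpha_j$. Since $d$ is fixed, there are finitely many such $j$, and we may take one uniform $c>0$.

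\emph{Odd $d$.} Here no sample equals $-1$, so every term falls under Lemma~\ref{twists}. Averaging the main terms gives the geometric sum
\[
 \frac1d\sum_{j=-(d-1)/2}^{(d-1)/2}\omega^{j},\qquad \omega=e^{2\pi i(a-1)/(ad)}.
\]
Its Dirichlet-kernel closed form is
\[
 \frac{\sin(\pi(a-1)/a)}{d\sin(\pi(a-1)/(ad))}.
\]
Since $\sin(\pi(a-1)/a)=\sin(\pi/a)$, this is \eqref{oddlimit}. The only thing to check is that the symmetric range of representatives is the correct one. Lemma~\ref{twists} requires $|\alpha|<1/2$, and the symmetric choice satisfies this for odd $d$.

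\emph{Even $d$.} The samples with $j\ne d/2$ are handled exactly as above. The representatives run over $j=-(d/2-1),\dots,d/2-1$. Their sum is a symmetric geometric sum with an even number of missing terms, namely
\[
 \frac1d\,\frac{\sin((d-1)\theta/2)}{\sin(\theta/2)},\qquad \theta=2\pi(a-1)/(ad).
\]
We will rewrite $\sin((d-1)\theta/2)=\sin(\pi(a-1)/a-\theta/2)$ and expand it. This gives
\[
 \frac{\sin(\pi/a)}{d}\cot\frac{\theta}{2}-\frac{\cos(\pi(a-1)/a)}{d}.
\]

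The sample $z=-1$ is treated with Lemma~\ref{alternating}. We have $J(-1,q)=U_-(t)/V_-(t)$, and the amplitudes $a_k(t)$ cancel. The ratio is
\[
 -\frac{\cos(\Theta-\phi-\pi/a)+\epsilon_a}{\cos(\Theta-\phi)+\epsilon}.
\]
Expanding the numerator as $\cos(\Theta-\phi)\cos(\pi/a)+\sin(\Theta-\phi)\sin(\pi/a)$ gives the main term $-\cos(\pi/a)-\sin(\pi/a)\tan(\Theta-\phi)$. Dividing by $d$, the constant $-\cos(\pi/a)/d$ exactly cancels the term $-\cos(\pi(a-1)/a)/d=+\cos(\pi/a)/d$ from the geometric sum. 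This leaves the bracket in \eqref{tangent}.

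\emph{Error bound (main obstacle).} The delicate step is controlling the error near zeros of $\cos(\Theta-\phi)$. On the region $|\cos(\Theta-\phi)|\ge\delta$ with $\sqrt t\le c\delta$, the bound $\epsilon=O(\sqrt t)$ keeps the denominator at least $\delta/2$. The difference between the perturbed and unperturbed ratios is
\[
 \frac{\epsilon_a\cos(\Theta-\phi)-\epsilon\cos(\Theta-\phi-\pi/a)}{\cos(\Theta-\phi)\,(\cos(\Theta-\phi)+\epsilon)}.
\]
Its numerator is $O(\sqrt t)$ and its denominator has modulus at least $\delta^2/2$, so the difference is $O(\sqrt t/\delta^2)$. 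The constant depends only on the implied constants in \eqref{Eest}, and in particular not on $\delta$.

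The exponentially small errors from the other samples satisfy $e^{-c/\sqrt t}\le C\sqrt t\le C\sqrt t/\delta^2$, so they are absorbed. The derivative bounds in \eqref{Eest} are not needed here; they will matter later for the pole analysis.
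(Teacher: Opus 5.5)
Your proposal is correct and follows the paper's proof essentially step for step. For odd $d$ you use symmetric representatives with Lemma~\ref{twists} and the Dirichlet kernel; for even $d$ you isolate $J(-1,q)=U_-/V_-$ via Lemma~\ref{alternating}, cancel $\cos(\pi/a)$ against the remaining phase sum, and bound the perturbed quotient by the same $(v\epsilon_a-u\epsilon)/\bigl(v(v+\epsilon)\bigr)$ estimate to get $O(\sqrt t/\delta^2)$.
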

\begin{proof}
For odd $d$, use representatives $r=-(d-1)/2,\ldots,(d-1)/2$ in \eqref{Wanalytic} and apply Lemma~\ref{twists}. The sum of limiting phases is
\[
 \sum_r e^{2\pi i(a-1)r/(ad)}
 =\frac{\sin(\pi(a-1)/a)}{\sin(\pi(a-1)/(ad))},
\]
a Dirichlet kernel value. This proves \eqref{oddlimit}.

For even $d$, isolate the alternating term. The sum of the remaining limiting phases, using $r=-d/2+1,\ldots,d/2-1$, is
\[
 \sin(\pi/a)\cot\frac{\pi(a-1)}{ad}+\cos(\pi/a).
\]
By Lemma~\ref{alternating}, away from the indicated phase zeros,
\[
 \frac{U_-}{V_-}
 =-\cos(\pi/a)-\sin(\pi/a)\tan(\Theta-\phi)+O(\sqrt t/\delta^2).
\]
To make the error uniform, write the alternating quotient as $-(u+e_1)/(v+e_2)$, where $u=\cos(\Theta-\phi-\pi/a)$, $v=\cos(\Theta-\phi)$, and $|e_1|+|e_2|\le C\sqrt t$. If $|v|\ge\delta$ and $C\sqrt t\le\delta/2$, subtraction of $-u/v$ bounds the difference by
\[
 \frac{|e_1|}{|v+e_2|}
 +\frac{|u||e_2|}{|v|\,|v+e_2|}
 \le C'\frac{\sqrt t}{\delta^2}.
\]
Adding the nonalternating terms and dividing by $d$ proves \eqref{tangent}.
\end{proof}

\begin{example}
For every positive integer $k$,
\[
 \lim_{q\to1^-}W_{2,3}(q,k)=\frac23,\qquad
 \lim_{q\to1^-}W_{2,5}(q,k)=\frac{1+\sqrt5}{5},\qquad
 \lim_{q\to1^-}W_{4,3}(q,k)=\frac13.
\]
For $a=d=2$, \eqref{tangent} becomes $W_{2,2}=(1-\tan(\Theta-\phi))/2+O(\sqrt t/\delta^2)$.
\end{example}

\begin{theorem}\label{poles}
For fixed $a\ge2$, even $d\ge2$, and $k\ge1$, the function $W_{a,d}(q,k)$ has infinitely many simple real poles approaching $1$. For every sufficiently large integer $j$, there is exactly one such pole $q_j=e^{-t_j/a}$ in the phase window
\[
 |\Theta(t)-\phi-\pi(j+1/2)|<\pi/4.
\]
These are all its real poles sufficiently close to $1$. They satisfy
\begin{align}
 \Theta(t_j)&=\pi\left(j+\frac{k+1}{8}\right)+O(j^{-1}),\label{polephase}\\
 1-q_j&\sim\frac{\pi k}{3aj^2},\label{polelocation}\\
 \Res_{q=q_j}W_{a,d}(q,k)
 &=\frac{2q_jt_j\sin(\pi/a)}{ad\Theta(t_j)}
       (1+O(\sqrt{t_j}))
 \sim\frac{2k\sin(\pi/a)}{3ad\,j^3}.\label{poleresidue}
\end{align}
In particular the residues are eventually positive. In the variable $Q=q^a$, these poles are precisely all sufficiently late zeros of the single function $\mathcal D(Q)$ in \eqref{commonD}. The denominator zeros obey $q_{a,j}=q_{2,j}^{2/a}$ under the common phase indexing.
\end{theorem}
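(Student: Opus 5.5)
The plan is to work entirely with the alternating pair $U_-,V_-$ from Lemma~\ref{alternating}, since by Lemma~\ref{twists} every other sample $z\ne-1$ contributes a term of $W_{a,d}$ that is holomorphic and bounded for small $t$. Hence the real poles of $W_{a,d}(e^{-t/a},k)$ near $q=1$ are exactly the zeros of $V_-(t)$ at which $U_-$ does not vanish. Put $\psi(t)=\Theta(t)-\phi$ and $f(t)=\cos\psi(t)+\epsilon(t)$, so that $V_-=-a_kf$ with $a_k>0$.

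\emph{Locating the zeros.} Outside the windows $|\psi-\pi(j+1/2)|<\pi/4$ we have $|\cos\psi|\ge 1/\sqrt2$, so $f\ne0$ there once $t$ is small, because $\epsilon=O(\sqrt t)$. Inside the $j$th window, $\psi$ is a smooth decreasing function of $t$ with $\Theta'(t)=-\Theta/(2t)$, and $\Theta\asymp t^{-1/2}$. Therefore
\[
 f'(t)=-\sin\psi\,\Theta'(t)+\epsilon'(t),
\]
where the first term has modulus at least $c\,t^{-3/2}$ and $\epsilon'=O(t^{-1})$ by \eqref{Eest}. So $f'$ has constant sign on the window for $t$ small. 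The window is the $t$-preimage of an interval on which $\cos\psi$ runs between $\pm1/\sqrt2$, so the intermediate value theorem and monotonicity give exactly one simple zero $t_j$ of $V_-$ there. Since $\cos\psi(t_j)=O(\sqrt{t_j})$, we also get $\psi(t_j)=\pi(j+1/2)+O(\sqrt{t_j})$. Then $\Theta\sim\pi j$ gives $t_j\sim\pi k/(3\pi^2 j^2)\cdot\pi^{2}\!/\pi^{...}$; more precisely, $\Theta=\pi^{3/2}\sqrt{k/(3t)}$ gives $t_j\sim \pi k/(3j^2)$. Combined with $1-q_j\sim t_j/a$, this yields \eqref{polelocation}. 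The error $O(\sqrt{t_j})$ is $O(j^{-1})$, and adding $\phi=(k-3)\pi/8$ to $\pi(j+1/2)$ gives \eqref{polephase}. The relation $q_{a,j}=q_{2,j}^{2/a}$ is immediate because the zero set lies in $Q=e^{-t}$, which does not depend on $a$. The identification with the late zeros of $\mathcal D$ follows from $V_-=Q\mathcal D(Q)$.

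\emph{Non-cancellation and residue.} At $t_j$ we have $\cos(\psi-\pi/a)=\pm\sin(\pi/a)+O(\sqrt t)\ne0$, so $U_-(t_j)\ne0$ and the pole is genuine and simple. The residue in $q$ of the alternating term $\frac1d U_-/V_-$ is
\[
 \frac1d\,\frac{U_-(t_j)}{dV_-/dq}, \qquad \frac{dt}{dq}=-\frac{a}{q}.
\]
Here $dV_-/dt=-a_kf'(t_j)$ up to the term $a_k'f$, which vanishes at $t_j$, and $f'(t_j)=\sin\psi\,\Theta/(2t_j)\,(1+O(\sqrt t))$. The value $U_-(t_j)/a_k=\cos\psi\cos(\pi/a)+\sin\psi\sin(\pi/a)+O(\sqrt t)=\sin\psi\sin(\pi/a)+O(\sqrt t)$. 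In the quotient the factors $\sin\psi=\pm1$ cancel, leaving
\[
 \frac{2q_jt_j\sin(\pi/a)}{ad\,\Theta(t_j)}\bigl(1+O(\sqrt{t_j})\bigr),
\]
with the sign checked by carrying the two minus signs, one from $V_-=-a_kf$ and one from $dt/dq$. Substituting $t_j\sim\pi k/(3j^2)$ and $\Theta(t_j)\sim\pi j$ gives the $j^{-3}$ asymptotic in \eqref{poleresidue}.

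The main obstacle is the uniqueness and simplicity step. It relies on the derivative bound $\epsilon'=O(t^{-1})$ being genuinely smaller than $|\Theta'|\asymp t^{-3/2}$, which is exactly why Lemma~\ref{Fourier} controls $\varepsilon'$. It also requires checking that the windows exhaust all small $t$, so that no stray zeros occur. I would verify carefully that the window boundaries map monotonically in $t$ and that the constants are uniform in $j$.
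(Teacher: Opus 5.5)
Your proposal is correct and follows essentially the same route as the paper. Both locate the zeros of $V_-$ from \eqref{Dminus} with the window argument, using $\epsilon'=O(t^{-1})$ against $|\Theta'|\asymp t^{-3/2}$; both rule out cancellation via \eqref{Nminus} and Lemma~\ref{twists}, and both compute the residue from $U_-(t_j)/V_-'(t_j)$ with $dq/dt=-q/a$ (your garbled intermediate expression for $t_j$ is just a typo, corrected by your next line $t_j\sim\pi k/(3j^2)$).
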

\begin{proof}
Write $\psi=\Theta-\phi$. By \eqref{Dminus}, the zeros of $V_-$ are the zeros of $\cos\psi+\epsilon$. At the two endpoints of each late phase window, $\cos\psi$ has opposite signs and absolute value $1/\sqrt2$, whereas $\epsilon=O(\sqrt t)$. A zero therefore exists. In the window $|\sin\psi|\ge1/\sqrt2$, and
\[
 \frac{d}{dt}(\cos\psi+\epsilon)=-\sin\psi\,\Theta'(t)+O(t^{-1}).
\]
The first term has constant sign and magnitude comparable to $t^{-3/2}$. It dominates the error for small $t$, proving uniqueness and simplicity. Outside the windows, $|\cos\psi|\ge1/\sqrt2$, so there are no other sufficiently small positive zeros.

At a zero, $\cos\psi=O(\sqrt t)$ and $|\sin\psi|=1+O(t)$. Hence \eqref{Nminus} gives
\[
 U_-(t_j)=a_k(t_j)\sin(\pi/a)\sin\psi(t_j)
                  (1+O(\sqrt{t_j}))\ne0.
\]
All the other twisted quotients are holomorphic in a neighborhood of this real point and have nonzero denominators by Lemma~\ref{twists}. Thus the zero is an actual simple pole of the average, with no cancellation.

The zero equation gives $\psi(t_j)=\pi(j+1/2)+O(\sqrt{t_j})$. It first implies $t_j\asymp j^{-2}$, then gives \eqref{polephase} and \eqref{polelocation}. At the zero, the derivative of the amplitude in \eqref{Dminus} contributes nothing, and
\[
 V_-'(t_j)=a_k(t_j)\sin\psi(t_j)\Theta'(t_j)
                         (1+O(\sqrt{t_j})).
\]
It follows that
\[
 \frac{U_-(t_j)}{V_-'(t_j)}
 =-\frac{2t_j\sin(\pi/a)}{\Theta(t_j)}(1+O(\sqrt{t_j})).
\]
Since $dq/dt=-q/a$, conversion to the $q$-residue and the factor $1/d$ in \eqref{Wanalytic} prove \eqref{poleresidue}. The final identity follows because $V_-(t)$ is independent of $a$.
\end{proof}

\begin{corollary}\label{secondpolelaw}
For the zeros indexed in Theorem~\ref{poles}, put
\[
 H=j+\frac{k+1}{8},\qquad
 \alpha_k=\frac{(k-1)(k-3)}{16}
           -\frac{\pi^2k}{6}\left(1-\frac{k}{24}\right).
\]
Then, for fixed $k\ge1$ as $j\to\infty$,
\begin{align}
 \Theta(t_j)&=\pi H-\frac{\alpha_k}{\pi H}+O(H^{-2}),
                                      \label{secondpolephase}\\
 t_j&=\frac{\pi k}{3H^2}
       \left(1+\frac{2\alpha_k}{\pi^2H^2}+O(H^{-3})\right).
                                      \label{secondpolet}
\end{align}
\end{corollary}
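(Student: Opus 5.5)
The plan is to sharpen the first-order analysis in Theorem~\ref{poles} by one term: make the $O(\xi^{-1/2})$ relative error of Lemma~\ref{Fourier} explicit, and then solve the zero equation of $V_-$ to second order. Throughout I keep the notation of that lemma, namely $A=\pi^2k/6$, $c=1-k/24$, $b=c+2\pi i\xi$ and $\nu=k/2-1$. The product remainder $\omega$ was already shown to be exponentially small relative to $I_0$. So it suffices to expand $I_0$ in \eqref{I0} to relative order $\xi^{-1}$.

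\emph{Step 1: refined Fourier asymptotics.} I will use the two-term Hankel expansion
\[
 K_\nu(z)=\sqrt{\frac\pi{2z}}\,e^{-z}\Bigl(1+\frac{4\nu^2-1}{8z}+O(z^{-2})\Bigr),
 \qquad 4\nu^2-1=(k-1)(k-3),
\]
which is uniform in the same subsector \cite[\S10.40]{DLMF}. I will also expand the phase:
\[
 \sqrt b=\sqrt{2\pi i\xi}\Bigl(1+\frac{c}{4\pi i\xi}+O(\xi^{-2})\Bigr),
 \qquad\text{so}\qquad
 2\sqrt{Ab}=2\sqrt{2\pi iA\xi}+\frac{c\sqrt A}{\sqrt{2\pi i\xi}}+O(\xi^{-3/2}).
\]
The factor $b^{(k-3)/4}$ contributes only a relative error $O(\xi^{-1})$. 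Collecting terms gives $\varepsilon(\xi)=\kappa\,\xi^{-1/2}+O(\xi^{-1})$ in \eqref{Fourierexp}, with an explicit constant
\[
 \kappa=\frac{e^{-i\pi/4}}{\sqrt{2\pi}}\Bigl(\frac{(k-1)(k-3)}{16\sqrt A}-c\sqrt A\Bigr).
\]
Here I use $z=2\sqrt{Ab}$ and $\sqrt{2\pi iA\xi}\,\sqrt{2\pi i\xi}$-type simplifications; the exact normalization is to be checked. In the same way I will verify that $\kappa$ is a real multiple of $e^{-i\pi/4}$, and that its real magnitude is proportional to $\alpha_k$ after the substitution $\sqrt A=\pi\sqrt{k/6}$.

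\emph{Step 2: the zero equation.} I substitute $\xi=1/(2t)$ into the two-frequency formula of Lemma~\ref{alternating}. The tails are $O(e^{-c/\sqrt t})$ and can be dropped. This gives
\[
 V_-(t)=-a_k(t)\,\Re\bigl\{e^{-i\psi}\bigl(1+\kappa\sqrt{2t}+O(t)\bigr)\bigr\},
 \qquad \psi=\Theta-\phi .
\]
At the zero $t_j$ we already know $\psi=\pi(j+\tfrac12)+O(\sqrt t)$. Writing $\psi=\pi(j+\tfrac12)+\eta$, the zero equation linearizes to
\[
 \eta=\sqrt{2t}\,\Im\bigl(\kappa e^{-i\pi/4}\cdots\bigr)+O(t),
\]
that is, $\eta=\lambda\sqrt t+O(t)$ for an explicit real $\lambda$. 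The sign of $\sin\psi$ cancels, because both $\cos\psi$ and the correction flip together. I then eliminate $\sqrt t$ using \eqref{phase}: $\sqrt t=\pi^{3/2}\sqrt{k/3}/\Theta$, so $\eta=O(1/H)$. Since $\phi/\pi+\tfrac12=(k+1)/8$, this gives $\Theta=\pi H+\lambda'/\Theta+O(H^{-2})$. Replacing $\Theta$ by $\pi H$ on the right yields \eqref{secondpolephase}, provided the computation produces $\lambda'=-\alpha_k/\pi$. The piece $(k-1)(k-3)/16$ comes from the Bessel term and the piece $-\tfrac{\pi^2k}{6}(1-\tfrac{k}{24})=-Ac$ from the phase shift, as the structure of $\alpha_k$ suggests.

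\emph{Step 3: inversion for $t_j$ and the main obstacle.} From \eqref{phase}, $t=\pi^3k/(3\Theta^2)$. Inserting \eqref{secondpolephase} gives $\Theta^{-2}=(\pi H)^{-2}\bigl(1+2\alpha_k/(\pi^2H^2)+O(H^{-3})\bigr)$, which is \eqref{secondpolet}. I expect the main obstacle to be bookkeeping rather than analysis. First, the constant $\kappa$ must be obtained exactly, including its argument relative to the leading phase $(1+i)$ and $e^{i(k-3)\pi/8}$, so that the imaginary-part extraction really yields $-\alpha_k$. Second, every neglected term must be genuinely $O(t)$ in the phase equation, so that the remainder is $O(H^{-2})$: this covers the $O(\xi^{-1})$ relative error and the quadratic term in $\eta$. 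To check the sign conventions, I will first test the whole computation at $k=3$. There $\alpha_3=-\tfrac{\pi^2}{2}\cdot\tfrac78$, the leading phase is $\phi=0$, and Glasser's transform (Lemma~\ref{cubictransform}) gives the exact $\widehat G$.
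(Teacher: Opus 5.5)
Your proposal is correct and follows essentially the same route as the paper. The two-term Hankel expansion, together with the shift $c/(4\pi i\xi)$ in $\sqrt b$, gives the relative correction $(1-i)\alpha_k/\Theta$ at $\xi=1/(2t)$. Your $\kappa\sqrt{2t}$, with $\kappa=e^{-i\pi/4}\alpha_k/\sqrt{2\pi A}$, is exactly this. The real-part zero equation is then solved in the $j$th phase window.

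There is one bookkeeping slip. The linearization actually gives $\eta=-\alpha_k/\Theta+O(t)$. So with your normalization $\Theta=\pi H+\lambda'/\Theta$, the required constant is $\lambda'=-\alpha_k$, not $-\alpha_k/\pi$.
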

\begin{proof}
Retain $A=\pi^2k/6$, $c=1-k/24$, and $b=c+2\pi i\xi$ from the proof of
Lemma~\ref{Fourier}. The uniform expansion
\[
 K_\nu(z)=\sqrt{\frac\pi{2z}}e^{-z}
       \left(1+\frac{4\nu^2-1}{8z}+O(z^{-2})\right)
\]
follows from \cite[\S10.40(i)]{DLMF}. At $\xi=1/(2t)$,
\[
 2\sqrt{Ab}=(1+i)\Theta+\frac{Ac(1-i)}{\Theta}
                         +O(\Theta^{-3}).
\]
The factor $b^{(k-3)/4}$ changes only by $1+O(\Theta^{-2})$ from its
leading term. Substitution in \eqref{I0}, with $\nu=k/2-1$, therefore gives
\[
 \widehat G(1/(2t))=B_k(2t)^{-(k-3)/4}e^{-(1+i)\Theta+i\phi}
       \left(1+\frac{(1-i)\alpha_k}{\Theta}+O(\Theta^{-2})\right).
\]
The product remainder and the other Fourier frequencies are exponentially
smaller, as in Lemmas~\ref{Fourier} and \ref{alternating}. The real-part
zero equation is thus
\[
 \left(1+\frac{\alpha_k}{\Theta}\right)
       \cos\left(\Theta-\phi+\frac{\alpha_k}{\Theta}\right)
       +O(\Theta^{-2})=0.
\]
The unique zero in the $j$th phase window satisfies
$\Theta+\alpha_k/\Theta=\pi H+O(H^{-2})$.
Using $\Theta=\pi H+O(H^{-1})$ proves \eqref{secondpolephase}.
The identity $t=\pi^3k/(3\Theta^2)$ then gives \eqref{secondpolet}.
\end{proof}

For example, $\alpha_3=-7\pi^2/16$, in agreement with
\eqref{refinedphase}, whereas $\alpha_{24}=483/16$.

\begin{corollary}\label{polespacing}
For fixed $a,k$ and even $d$, let $q_j$ be the poles indexed in
Theorem~\ref{poles}. Then
\begin{equation}\label{spacingratio}
 q_{j+1}-q_j\sim\frac{2\pi k}{3aj^3},\qquad
 \frac{\Res_{q=q_j}W_{a,d}(q,k)}{q_{j+1}-q_j}
       \longrightarrow\frac{\sin(\pi/a)}{\pi d}.
\end{equation}
For any fixed $q_0<1$ sufficiently close to $1$,
\begin{equation}\label{polecount}
 \#\{j:q_0\le q_j\le1-\varepsilon\}
 =\sqrt{\frac{\pi k}{3a}}\,\varepsilon^{-1/2}+O(1)
 \qquad(\varepsilon\downarrow0).
\end{equation}
\end{corollary}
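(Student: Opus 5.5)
The plan is to derive both assertions from the location formula \eqref{polelocation}, sharpened by Corollary~\ref{secondpolelaw}, together with the residue asymptotic \eqref{poleresidue}.

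\textbf{Spacing.} Since $q_j=e^{-t_j/a}$, we have $q_{j+1}-q_j=e^{-t_{j+1}/a}-e^{-t_j/a}=(t_j-t_{j+1})/a\cdot(1+O(t_j))$. From \eqref{secondpolet},
\[
 t_j=\frac{\pi k}{3H^2}\bigl(1+O(H^{-2})\bigr),\qquad H=j+\tfrac{k+1}8 .
\]
The leading differences therefore satisfy
\[
 \frac{\pi k}{3}\bigl(H^{-2}-(H+1)^{-2}\bigr)\sim\frac{2\pi k}{3j^3}.
\]
The correction term is $(\pi k/3)\cdot 2\alpha_k/\pi^2\cdot\bigl(H^{-4}-(H+1)^{-4}\bigr)+O(H^{-5})$, which is $O(j^{-5})$ and hence negligible.

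This is the step that needs care. Only the first-order law \eqref{polelocation} would not suffice, because the difference of two relative errors $o(1)\cdot j^{-2}$ need not be $o(j^{-3})$. The second-order law \eqref{secondpolet}, with its $O(H^{-5})$ absolute error, is exactly what makes the difference of consecutive terms controllable. With it, $q_{j+1}-q_j\sim 2\pi k/(3aj^3)$.

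\textbf{Residue ratio.} Dividing \eqref{poleresidue} by the spacing gives
\[
 \frac{\Res_{q=q_j}W_{a,d}(q,k)}{q_{j+1}-q_j}\sim\frac{2k\sin(\pi/a)}{3ad\,j^3}\cdot\frac{3aj^3}{2\pi k}=\frac{\sin(\pi/a)}{\pi d}.
\]

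\textbf{Counting.} Choose $q_0$ close enough to $1$ that every real pole in $[q_0,1)$ is one of the indexed $q_j$, with $j\ge j_0$ and $q_j$ increasing in $j$. Both properties hold by Theorem~\ref{poles}: the poles are all the late ones, there is exactly one per phase window, and $\Theta$ is monotone in $t$.

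The condition $q_j\le1-\varepsilon$ is then equivalent to $1-q_j\ge\varepsilon$. Using $1-q_j=t_j/a+O(t_j^2)=\pi k/(3aH^2)\bigl(1+O(H^{-2})\bigr)$, this becomes
\[
 H\le\sqrt{\pi k/(3a\varepsilon)}\,(1+O(\varepsilon)).
\]
The multiplicative error $1+O(\varepsilon)$ changes the bound on $H$ only by $O(\sqrt\varepsilon)=O(1)$. The number of integers $j$ with $j_0\le j$ and $j+(k+1)/8$ below this bound is therefore $\sqrt{\pi k/(3a)}\,\varepsilon^{-1/2}+O(1)$. The fixed lower cutoff at $q_0$ contributes only an $O(1)$ constant, which gives \eqref{polecount}.
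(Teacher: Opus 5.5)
Your proposal is correct and follows the paper's proof: you write $1-q_j=\pi k/(3aH^2)$ plus a remainder of lower order than $j^{-3}$, subtract consecutive terms, divide \eqref{poleresidue} by the spacing, and invert the location estimate to count the poles. The only difference is that the paper gets the remainder $O(j^{-4})$ directly from the first-order phase law \eqref{polephase}: an $O(j^{-1})$ error in $\Theta(t_j)$ is a relative $O(j^{-2})$ error in $t_j=\pi^3k/(3\Theta(t_j)^2)$, so Corollary~\ref{secondpolelaw} is more than you need (although you are right that the bare asymptotic \eqref{polelocation} would not suffice).
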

\begin{proof}
The phase estimate \eqref{polephase}, the definition of $\Theta$, and
$1-e^{-t/a}=t/a+O(t^2)$ give
\[
 1-q_j=\frac{\pi k}{3a(j+(k+1)/8)^2}+O(j^{-4}).
\]
Subtracting consecutive terms proves the first assertion in
\eqref{spacingratio}; the remainder is smaller than the leading spacing.
The residue asymptotic \eqref{poleresidue} proves the second assertion.
Inverting the same estimate gives
$(1-q_j)^{-1/2}=\sqrt{3a/(\pi k)}\,(j+(k+1)/8)+O(j^{-1})$.
Counting the corresponding integers proves \eqref{polecount}.
\end{proof}

For $a=d=2$, the limiting residue-to-spacing ratio is $1/(2\pi)$,
independently of $k$.

\begin{corollary}\label{nonmodular}
For $a\ge2$, even $d\ge2$, and $k\ge1$, the function
$\tau\mapsto W_{a,d}(e^{2\pi i\tau},k)$ is not a meromorphic modular function on any finite-index subgroup of $\mathrm{SL}_2(\Z)$ with meromorphic behaviour at the cusps.
\end{corollary}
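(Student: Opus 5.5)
The plan is to argue by contradiction, using the accumulation of poles at the cusp $0$ from Theorem~\ref{poles}. Suppose $f(\tau)=W_{a,d}(e^{2\pi i\tau},k)$ is a meromorphic modular function on a finite-index subgroup $\Gamma\subset\mathrm{SL}_2(\Z)$ with meromorphic behaviour at every cusp. Then $f$ extends to a meromorphic function on the compact curve $X(\Gamma)$. In particular, at the cusp $0$ there is a local uniformizer in which $f$ is meromorphic.

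Concretely, choose $\gamma\in\mathrm{SL}_2(\Z)$ with $\gamma\infty=0$, for instance $\gamma=\begin{pmatrix}0&-1\\1&0\end{pmatrix}$. The function $g(\tau)=f(\gamma\tau)=f(-1/\tau)$ is then invariant under $\gamma^{-1}\Gamma\gamma$. This group is of finite index, so it contains a translation $\tau\mapsto\tau+h$ with $h\ge1$ (up to $\pm$). Meromorphy at the cusp means $g(\tau)=\sum_{n\ge -N}c_n e^{2\pi i n\tau/h}$ for $\Im\tau$ sufficiently large. Hence $g$ has no poles in some region $\Im\tau>Y$.

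Now I transport the real poles. Each pole $q_j=e^{-t_j/a}$ corresponds to $\tau_j=it_j/(2\pi a)$ on the imaginary axis, with $t_j\to0$ by \eqref{polelocation}. The point $-1/\tau_j=2\pi a i/t_j$ is a pole of $g$ on the imaginary axis, and its imaginary part tends to $\infty$. Simplicity and nonzero residue, established in Theorem~\ref{poles}, guarantee that these are genuine poles and not removable singularities. This contradicts the absence of poles for $\Im\tau>Y$.

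The main obstacle is purely bookkeeping. Meromorphy at the cusps must be read as a condition about every cusp of $\Gamma$, including the $\Gamma$-orbit of $0$. Some care is also needed if the chosen width uses $\pm$ or an irregular cusp. Even then, $g(\tau+2h)=g(\tau)$ gives a Laurent expansion in $e^{\pi i\tau/h}$, and the conclusion that $g$ is pole-free near $i\infty$ is unchanged. No use of the odd-section limit is needed. As a remark, one could alternatively contrast \eqref{targetcusp} with the oscillatory formula \eqref{tangent}: since $\tan(\Theta-\phi)$ is not a Laurent series in a power of $e^{-4\pi^2 a/t}$, the same contradiction follows.
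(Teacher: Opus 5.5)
Your proof is correct and takes the paper's approach: the poles $\tau_j=it_j/(2\pi a)\to0$ from Theorem~\ref{poles} contradict meromorphy at the cusp $0$. You make the paper's ``local parameter at the cusp $0$'' explicit by conjugating with $\tau\mapsto-1/\tau$, so the poles become $2\pi a i/t_j\to i\infty$, and a Laurent expansion with finitely many negative terms at $i\infty$ rules them out; your closing alternative via \eqref{tangent} is unnecessary, and \eqref{targetcusp} concerns $T$ rather than $W_{a,d}$, so it plays no role.
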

\begin{proof}
The poles in Theorem~\ref{poles} correspond to $\tau_j=it_j/(2\pi a)\to0$. If the function were meromorphic modular, a local parameter at the cusp $0$ would express it as a meromorphic function in a disk about zero; see \cite{DiamondShurman}. Such a function has no sequence of distinct poles tending to the center. The poles $\tau_j$ are distinct in a sufficiently small cusp neighborhood, giving a contradiction.
\end{proof}

\section{The cubic refinement and exact cancellation}
The cubic Euler product admits an exact transform. We include a derivation to fix its normalization and justify the interchange involved. The underlying Jacobi identity is classical; see \cite{AndrewsAskeyRoy,AndrewsBerndt}.

\begin{lemma}\label{cubictransform}
For $k=3$ and real $\xi$,
\begin{equation}\label{exactFourier}
 \widehat G(\xi)=2\pi\sech\!\left(\frac\pi2\sqrt{7+16\pi i\xi}\right).
\end{equation}
Consequently,
\begin{equation}\label{exactD}
 V_-(t)=-\frac{4\pi}{t}
 \sum_{\substack{n\ge1\\n\text{ odd}}}
 \Re\sech\!\left(\frac\pi2\sqrt{7+\frac{8\pi in}{t}}\right),
\end{equation}
with absolute convergence, locally uniform for $t>0$.
\end{lemma}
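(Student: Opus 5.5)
The plan is to use Jacobi's identity $\Phi(q)^3=\sum_{n\ge0}(-1)^n(2n+1)q^{n(n+1)/2}$. With it, $G(v)=e^{-v}\Phi(e^{-v})^3$ becomes an explicit series of exponentials:
\[
 G(v)=\sum_{n\ge0}(-1)^n(2n+1)\exp\!\left(-v\Bigl(\tfrac{(2n+1)^2}{8}+\tfrac78\Bigr)\right),\qquad v>0,
\]
using $1+n(n+1)/2=((2n+1)^2+7)/8$. For each fixed $v>0$ this converges absolutely. Near $v=0$, however, the partial sums are not dominated uniformly, so the Fourier integral cannot be swapped with the sum naively. I would first compute the Laplace transform $\mathcal L(s)=\int_0^\infty G(v)e^{-sv}\,dv$ for real $s>0$ and only afterwards set $s=2\pi i\xi$ by analytic continuation. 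For real $s>0$ each term integrates to
\[
 \frac{(-1)^n(2n+1)}{s+((2n+1)^2+7)/8}
 =\frac{8(-1)^n(2n+1)}{(2n+1)^2+w^2},\qquad w^2=7+8s.
\]
The resulting alternating series converges conditionally. It matches the classical partial-fraction expansion
\[
 \frac{\pi}{\cosh(\pi w/2)}=4\sum_{n\ge0}\frac{(-1)^n(2n+1)}{(2n+1)^2+w^2}.
\]
Hence $\mathcal L(s)=2\pi\sech\bigl(\tfrac\pi2\sqrt{7+8s}\bigr)$.

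To justify exchanging sum and integral, I would insert an Abel factor. Replace $G$ by $G_r(v)=\sum_n(-1)^n(2n+1)r^n e^{-v\lambda_n}$ with $0<r<1$. For $G_r$ the interchange is legitimate, because $\sum_n(2n+1)r^n/\lambda_n<\infty$. Letting $r\to1^-$, the series side converges by Abel's theorem, since the alternating series converges. On the integral side I expect the main obstacle: I need $\int_0^\infty G_r e^{-sv}\,dv\to\int_0^\infty Ge^{-sv}\,dv$. I would write $G_r(v)=e^{-7v/8}\,\theta_r(v)$ with $\theta_r(v)=\sum_n(-1)^n(2n+1)r^ne^{-v(2n+1)^2/8}$. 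Summation by parts, or viewing $\theta_r$ as an Abel mean of a theta derivative, gives the bound $|\theta_r(v)|\le C(1+v^{-3/2})$. That bound is not integrable at $0$. So I would go one step further and use the modular transformation of the theta derivative, as in Lemma~\ref{Gschwartz}: $\theta_1$ is exponentially small as $v\downarrow0$, and the Abel means are uniformly controlled. An alternative that avoids this is to integrate the series termwise against $e^{-sv}$ only over $[\epsilon,\infty)$, where convergence is uniform, and to bound the contribution of $[0,\epsilon]$ using the Schwartz decay of $G$ from Lemma~\ref{Gschwartz}. The termwise integrals $\int_0^\epsilon$ would then need summing to $o(1)$, which again reduces to the partial-fraction series. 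Either way, the result is the identity for real $s>0$.

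Next comes continuation. Lemma~\ref{Gschwartz} shows that $G$ is Schwartz, so $\mathcal L(s)$ is analytic in $\Re s>0$ and continuous up to $\Re s=0$. The right-hand side is analytic in $s$ on the same half-plane: $\sech(\pi w/2)$ is even in $w$, hence a function of $w^2=7+8s$ alone, and its poles occur at $w^2=-(2n+1)^2$, i.e.\ at $s<0$. The identity therefore extends to $s=2\pi i\xi$, which gives \eqref{exactFourier} with $7+16\pi i\xi$.

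For \eqref{exactD}, I would apply Lemma~\ref{twists} with $c=1$ and $\alpha=1/2$. The frequencies $n-\tfrac12$ pair with their negatives, and $\widehat G(-\xi)=\overline{\widehat G(\xi)}$. This produces $-\tfrac{2}{t}\sum\Re\widehat G\bigl(\tfrac{m}{2t}\bigr)$ over odd $m\ge1$, with sign $e^{2\pi i(n-1/2)}=-1$. Substituting $\widehat G(m/(2t))=2\pi\sech\bigl(\tfrac\pi2\sqrt{7+8\pi im/t}\bigr)$ then gives the displayed formula. For absolute and locally uniform convergence, note that $\Re\sqrt{7+8\pi im/t}\sim\sqrt{4\pi m/t}$, so the terms decay like $e^{-c\sqrt{m/t}}$, uniformly for $t$ in compact subsets of $(0,\infty)$.
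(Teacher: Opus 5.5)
\textbf{Gap: the limit interchange is never proved.} The rest of your argument is correct. The identity $1+n(n+1)/2=((2n+1)^2+7)/8$, the $\sech$ partial-fraction expansion and the Poisson pairing with sign $e^{2\pi i(n-1/2)}=-1$ all check out. Evaluating at real $s>0$ and then continuing is a valid alternative to the paper's direct use of $\sum_n(-1)^n/(n-z)=-\pi\csc(\pi z)$ at complex $z$. Since $\sech(\pi w/2)$ is even in $w$ and has poles only at $s=-((2n+1)^2+7)/8<0$, this route avoids the root-pairing and branch bookkeeping.

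The missing step is the one you flag yourself: you never prove $\int_0^\infty G_r(v)e^{-sv}\,dv\to\int_0^\infty G(v)e^{-sv}\,dv$. The bound $|\theta_r(v)|\le C(1+v^{-3/2})$ that you state gives no integrable majorant. Neither fallback closes the gap:
\begin{itemize}
\item The modular transformation controls $\theta_1$ only. The Abel means $\theta_r$, $r<1$, have no transformation law, so ``the Abel means are uniformly controlled'' is exactly the assertion that needs proof.
\item In your second route, $\sum_n(-1)^n(2n+1)(1-e^{-\epsilon\lambda_n})/\lambda_n\to0$ is asserted but not proved.
\end{itemize}
This interchange is the only analytic subtlety in the lemma.

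\textbf{How to fix it.} Either of two short arguments works.
\begin{itemize}
\item \emph{The paper's route.} Regularize the nome, $e^{-v}\mapsto e^{-v-\epsilon}$, instead of inserting $r^n$. The integrand $e^{-v}\Phi(e^{-v-\epsilon})^3$ then lies in $(0,e^{-v}]$, so dominated convergence is immediate. On the series side the weights $e^{-\epsilon n(n+1)/2}$ decrease in $n$, and summation by parts against the tails of the convergent alternating series gives the limit. Your second route, once completed, is exactly this: $\int_\epsilon^\infty G(v)e^{-sv}\,dv=e^{-\epsilon(1+s)}\int_0^\infty e^{-u}\Phi(e^{-u-\epsilon})^3e^{-su}\,du$, and the missing ingredient is monotonicity of $e^{-\epsilon\lambda_n}$ in $n$.
\item \emph{Keep $r^n$ and sharpen the bound.} The sequence $w_n=(2n+1)r^ne^{-v(2n+1)^2/8}$ is log-concave in $n$, hence unimodal. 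An alternating sum of a unimodal nonnegative sequence is at most twice its largest term, so $|\theta_r(v)|\le 2\max_n w_n\le 4e^{-1/2}v^{-1/2}$ uniformly for $0<r\le1$. This is an integrable majorant, and dominated convergence then applies.
\end{itemize}
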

\begin{proof}
Jacobi's identity is
\[
 \Phi(q)^3=\sum_{n\ge0}(-1)^n(2n+1)q^{n(n+1)/2}.
\]
Replace $q=e^{-v}$ by $e^{-v-\epsilon}$, with $\epsilon>0$, in this identity and integrate against $e^{-v-2\pi i\xi v}$. The integrated series converges absolutely. On the integral side, $0<\Phi(e^{-v-\epsilon})^3\le1$, so dominated convergence applies as $\epsilon\downarrow0$. On the series side, summation by parts shows that the decreasing weights $e^{-\epsilon n(n+1)/2}$ preserve the limit of the convergent alternating series. Hence
\[
 \widehat G(\xi)=2\sum_{n\ge0}
 \frac{(-1)^n(2n+1)}{n^2+n+2+4\pi i\xi}.
\]
Let $\alpha,\beta$ be the roots of $z^2+z+2+4\pi i\xi=0$. The summand without the factor $2$ is invariant under $n\mapsto-1-n$, and
\[
 \frac{2n+1}{(n-\alpha)(n-\beta)}
       =\frac1{n-\alpha}+\frac1{n-\beta}.
\]
The classical partial-fraction expansion
$\sum_{n\in\Z}(-1)^n/(n-z)=-\pi\csc(\pi z)$, interpreted by symmetric limits, now gives
\[
 \widehat G(\xi)=-\pi\{\csc(\pi\alpha)+\csc(\pi\beta)\}.
\]
Since $\beta=-1-\alpha$, the two sine values agree. Taking
$\alpha=(-1+\sqrt{-7-16\pi i\xi})/2$ proves \eqref{exactFourier}. This is also the rescaled eta-cube Laplace transform in \cite[eq.~(14)]{Glasser}; related eta integrals are treated in \cite{Coffey}.

Pairing the positive and negative frequencies in \eqref{Poisson} gives \eqref{exactD}. Its terms have size $O(e^{-c\sqrt{n/t}})$ for large odd $n$, which proves the convergence assertions.
\end{proof}

\begin{theorem}\label{cubicpoles}
Let $k=3$, and index the positive zeros $t_j$ of $V_-$ as in
Theorem~\ref{poles}. Put
\[
 h=j+\frac12,\qquad t_j^*=\frac{\pi}{h\sqrt{h^2+7/4}}.
\]
Then
\begin{equation}\label{cubicpolelaw}
 t_j=t_j^*+O\!\left(h^{-3}e^{-(\sqrt3-1)\pi h}\right),
\end{equation}
and, more precisely,
\begin{equation}\label{cubicsharp}
 (t_j-t_j^*)h^3e^{(\sqrt3-1)\pi h}
 =2(-1)^{j+1}\cos(\sqrt3\pi h)+O(h^{-1}).
\end{equation}
The set of limit points of the left side is $[-2,2]$. In particular,
\[
 \limsup_{j\to\infty}|t_j-t_j^*|h^3e^{(\sqrt3-1)\pi h}=2,
\]
so no bound $O(h^{-3}e^{-ch})$ with $c>(\sqrt3-1)\pi$ can hold for all
sufficiently large $j$. Moreover,
\begin{equation}\label{refinedphase}
 \Theta(t_j)=\pi\left(h+\frac7{16h}+O(h^{-3})\right).
\end{equation}
The corresponding poles of every even section are $q_j=e^{-t_j/a}$.
\end{theorem}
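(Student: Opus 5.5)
The plan is to work directly with the exact representation \eqref{exactD} of Lemma~\ref{cubictransform}. It replaces the asymptotic expansion of Lemma~\ref{Fourier} by an explicit series whose dominant part can be solved exactly. Write $z_n(t)=\frac\pi2\sqrt{7+8\pi i n/t}=X_n+iY_n$ with principal branch, and use the expansion
\[
 \sech z=2\sum_{m\ge0}(-1)^me^{-(2m+1)z}\qquad(\Re z>0),
\]
which gives $\Re\sech z_n=2\sum_m(-1)^me^{-(2m+1)X_n}\cos((2m+1)Y_n)$. Since $\Theta=\pi^{3/2}t^{-1/2}$ for $k=3$, one has $X_n,Y_n=\sqrt n\,\Theta+O(\Theta^{-1})$. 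Hence $-tV_-(t)/(8\pi)$ equals
\[
 e^{-X_1}\cos Y_1-e^{-3X_1}\cos 3Y_1+e^{-X_3}\cos Y_3+O\bigl(e^{-\sqrt5\,\Theta}\bigr).
\]
The $n=3$ term, of relative size $e^{-(\sqrt3-1)\Theta}$, beats the $m=1$ term, of relative size $e^{-2\Theta}$, because $\sqrt3-1<2$. This is the source of the exponent in \eqref{cubicpolelaw}.

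First I solve the leading equation $\cos Y_1(t)=0$ exactly. In the $j$th window of Theorem~\ref{poles} (here $\phi=0$) this means $Y_1=\pi h$. Writing $\sqrt{7+8\pi i/t}=u+2ih$ and squaring gives $u^2-4h^2=7$ and $4uh=8\pi/t$. Thus $u=\sqrt{4h^2+7}$, and the solution is precisely $t=t_j^*$. Substituting $t_j^*$ into $\Theta=\pi^{3/2}t^{-1/2}$ gives $\Theta(t_j^*)=\pi h(1+7/(4h^2))^{1/4}=\pi(h+7/(16h)+O(h^{-3}))$. Together with the exponentially small shift $t_j-t_j^*$, this yields \eqref{refinedphase}.

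Next I perturb. At a zero, $\cos Y_1(t_j)=-e^{X_1-X_3}\cos Y_3+O(e^{-2X_1})$. Linearizing around $t_j^*$, where $\sin Y_1=\sin\pi h=(-1)^j$, gives
\[
 Y_1(t_j)-\pi h=(-1)^{j}e^{X_1-X_3}\cos Y_3\,(1+o(1)).
\]
Then I divide by $Y_1'(t)=-Y_1/(2t)\,(1+O(t))$, which is of size $h^3/(2\pi)$ up to sign. The exponents $X_1=\pi h\cdot(1+O(h^{-2}))$ and $X_3=\sqrt3\pi h+O(h^{-1})$, and the phase $Y_3=\sqrt3\pi h+O(h^{-1})$, must all be evaluated at $t_j^*$. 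The $O(h^{-1})$ corrections then feed only into the $O(h^{-1})$ error in \eqref{cubicsharp}, with the constant determined by tracking the factor $2$ from the sech expansion against $2\pi/h^3$ from the derivative.

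The step needing most care is exactly this bookkeeping. I must verify that the $O(1/\Theta)$ terms in $X_3-\sqrt3X_1$ contribute a bounded multiplicative factor $1+O(h^{-1})$ rather than an $O(1)$ factor. Expanding $\sqrt{7+24\pi i/t}$ at $t=t_j^*$ should show $X_3-\sqrt3\,\pi h=O(h^{-1})$ and $X_1-\pi h=O(h^{-1})$. I must also check the overall sign to reach $2(-1)^{j+1}\cos(\sqrt3\pi h)$. Uniqueness and the location in the window are already supplied by Theorem~\ref{poles}, so only the implicit-function estimate is needed. I expect to use a mean value argument with the derivative bound on the correction terms, which are differentiable with exponentially small derivatives relative to $Y_1'$.

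For the limit points, I write $(-1)^{j+1}\cos(\sqrt3\pi h)$ as $-\tfrac12[\cos((\sqrt3+1)\pi j+c_+)+\cos((\sqrt3-1)\pi j+c_-)]$, or more simply observe that $(-1)^{j+1}\cos(\sqrt3\pi(j+\tfrac12))=\cos(\pi(j+1)-\ldots)$ type expressions reduce to $\pm\cos(\pi(\sqrt3-1)j+c)$. By Weyl equidistribution ($\sqrt3$ irrational), $2(-1)^{j+1}\cos(\sqrt3\pi h)$ is dense in $[-2,2]$. Since the left side of \eqref{cubicsharp} differs from it by $O(h^{-1})$, the set of limit points is exactly $[-2,2]$, and the $\limsup$ assertion and the sharpness statement follow. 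The final sentence of the theorem is Theorem~\ref{poles}.
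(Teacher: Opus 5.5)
Your route is the paper's: the exact center $t_j^*$ comes from $\cos Y_1=0$, the $n=1$ term is linearized about it, the $n=3$ frequency supplies the correction, and the $n\ge5$ tail is the remainder. The paper uses the closed form $H_1=\cosh u\cos v/(\sinh^2u+\cos^2v)$ with $z_1=u+iv$ instead of the exponential series for $\sech$, which is an inessential difference.

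However, the step you flag as delicate is done incorrectly. It is exactly the step that produces the constant $2$ in \eqref{cubicsharp}, and hence the limit-point set $[-2,2]$ and the value of the $\limsup$. From $Y_1'=-Y_1/(2t)\,(1+O(t))$ with $Y_1=\pi h$ and $t_j^*=\pi h^{-2}(1+O(h^{-2}))$, you get $|Y_1'(t_j^*)|=(h^3/2)(1+O(h^{-2}))$, not $h^3/(2\pi)$. The paper computes $|v'(t_j^*)|^{-1}$ in closed form and gets $2h^{-3}(1+O(h^{-2}))$. Also, the factor $2$ in $\sech z=2\sum_m(-1)^me^{-(2m+1)z}$ multiplies the $n=1$ and $n=3$ terms alike. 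It has already cancelled from your normalized equation $\cos Y_1=-e^{X_1-X_3}\cos Y_3+\cdots$, so there is nothing to track it against. Followed literally, your bookkeeping gives $2\pi$ or $4\pi$ instead of $2$. The correct chain is
\[
 t_j-t_j^*=\frac{Y_1(t_j)-\pi h}{Y_1'}
 =\frac{(-1)^je^{-(\sqrt3-1)\pi h}\bigl(\cos(\sqrt3\pi h)+O(h^{-1})\bigr)}{-(h^3/2)\bigl(1+O(h^{-2})\bigr)},
\]
which yields \eqref{cubicsharp} with the stated sign.

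There are two smaller points. First, after dividing by $e^{-X_1}$, the dominant remainder is the $n=5$ term, of order $e^{-(\sqrt5-1)\pi h}$, not $O(e^{-2X_1})$, because $\sqrt5-1<2$. It is still smaller than the main correction by a factor $e^{-(\sqrt5-\sqrt3)\pi h}$, so nothing breaks. Second, write the perturbation with an additive error, $Y_1(t_j)-\pi h=(-1)^je^{X_1-X_3}\cos Y_3+O(e^{-(\sqrt5-1)\pi h})$, not with a factor $1+o(1)$, because $\cos Y_3$ can be arbitrarily small. The $O(h^{-1})$ in \eqref{cubicsharp} is additive for the same reason. With these corrections the argument goes through, including your density step and the derivation of the refined phase from $\Theta(t_j^*)=\pi h(1+7/(4h^2))^{1/4}$.
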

\begin{proof}
Write
\[
 z_n(t)=\frac\pi2\sqrt{7+8\pi in/t},\qquad
 H_n(t)=\Re\sech z_n(t),\qquad n\ge1\text{ odd}.
\]
By \eqref{exactD}, the zero equation is $\sum_{n\ge1\text{ odd}}H_n(t_j)=0$.
If $z_1=u+iv$, then
\begin{equation}\label{sechreal}
 H_1(t)=\frac{\cosh u\cos v}{\sinh^2u+\cos^2v},\qquad
 u^2-v^2=\frac{7\pi^2}{4},\qquad uv=\frac{\pi^3}{t}.
\end{equation}
Consequently
\[
 z_1(t_j^*)=\pi\sqrt{h^2+7/4}+i\pi h,\qquad H_1(t_j^*)=0.
\]
The phase estimate in Theorem~\ref{poles} first gives
$t_j=\pi h^{-2}(1+O(h^{-2}))$, and hence $t_j-t_j^*=O(h^{-4})$.
We may therefore work on a segment $|t-t_j^*|\le Ch^{-4}$ containing
both points. On this segment,
$u=\pi h+O(h^{-1})$ and $v=\pi h+O(h^{-1})$.
Differentiation of \eqref{sechreal} gives
\begin{equation}\label{cubicderivative}
 H_1'(t)=(-1)^j h^3e^{-\pi h}(1+O(h^{-1})),\qquad
 H_1''(t)=O(h^6e^{-\pi h}).
\end{equation}
For the leading constant, at the center one has
\[
 |v'(t_j^*)|^{-1}
 =\frac{2h^2+7/4}{h^2(h^2+7/4)^{3/2}}
 =2h^{-3}(1+O(h^{-2})),
\]
and $\cosh u/\sinh^2u=2e^{-u}(1+O(e^{-2u}))$.

For every odd $n\ge3$, the square-root formula gives, uniformly on the
same segment,
\[
 \Re z_n(t)\ge\sqrt n(\pi h-C/h),\qquad
 |z_n'(t)|\le C\sqrt n\,h^3.
\]
Using $|\sech z|+|\sech z\tanh z|\le Ce^{-\Re z}$ and comparing the
remaining sums with their integrals yields
\begin{equation}\label{cubictail}
 \sum_{n\ge3\text{ odd}}|H_n(t)|=O(e^{-\sqrt3\pi h}),\qquad
 \sum_{n\ge3\text{ odd}}|H_n'(t)|=O(h^3e^{-\sqrt3\pi h}).
\end{equation}
The mean value theorem applied to $H_1$, followed by the zero equation,
now proves \eqref{cubicpolelaw}.

At the next frequency,
$\Re z_3(t_j^*)=\sqrt3\pi h+O(h^{-1})$ and
$\Im z_3(t_j^*)=\sqrt3\pi h+O(h^{-1})$. Therefore
\begin{equation}\label{cubicthird}
 H_3(t_j^*)=2e^{-\sqrt3\pi h}
             \bigl(\cos(\sqrt3\pi h)+O(h^{-1})\bigr).
\end{equation}
The tail in \eqref{cubictail} starting at $n=5$ is
$O(e^{-\sqrt5\pi h})$. Put $\delta=t_j-t_j^*$.
Taylor's theorem and \eqref{cubicderivative}--\eqref{cubictail} give
\[
 0=H_1'(t_j^*)\delta+H_3(t_j^*)+
 O\!\left(e^{-\sqrt5\pi h}
       +h^6e^{-\pi h}\delta^2
       +h^3e^{-\sqrt3\pi h}|\delta|\right).
\]
After division by $H_1'(t_j^*)$, the error is
$O(h^{-4}e^{-(\sqrt3-1)\pi h})$, by \eqref{cubicpolelaw}.
Equations \eqref{cubicderivative} and \eqref{cubicthird} prove
\eqref{cubicsharp}. Since
\[
 (-1)^{j+1}\cos(\sqrt3\pi h)
 =\cos\!\left(\pi(\sqrt3+1)j+\frac{\pi\sqrt3}{2}+\pi\right),
\]
the elementary density of irrational rotations gives the asserted
limit-point set. Finally,
$\Theta(t_j^*)=\pi h(1+7/(4h^2))^{1/4}$;
\eqref{cubicpolelaw} and Taylor expansion prove \eqref{refinedphase}.
\end{proof}

\begin{example}
The elementary part of \eqref{cubicpolelaw} contains all algebraic orders:
\[
 \frac{\pi}{h\sqrt{h^2+7/4}}
 =\frac\pi{h^2}\left(1-\frac7{8h^2}+\frac{147}{128h^4}
             -\frac{1715}{1024h^6}+O(h^{-8})\right).
\]
\end{example}

Primitive roots provide a different consequence of the twisted limit. Recall that $W_{a,1}(q,k)=J(1,q)$. For $d\ge3$, define
\begin{equation}\label{Addef}
 \mathcal A_d(q)=\sum_{e\mid d}\mu(d/e)eW_{a,e}(q,k),
\end{equation}
where $\mu$ is the M\"obius function. The dilation and Euler power remain fixed in this notation.

\begin{corollary}\label{primitive}
For $d\ge3$,
\begin{equation}\label{primitiveidentity}
 \mathcal A_d(q)=\sum_{\ord(z)=d}J(z,q).
\end{equation}
In particular the alternating quotient $J(-1,q)$ cancels identically, and $\mathcal A_d$ has no real poles on some interval ending at $q=1$. Moreover,
\begin{equation}\label{primitivelimit}
 \mathcal A_d(e^{-t/a})=
 2\sum_{\substack{1\le r<d/2\\(r,d)=1}}
 \cos\frac{2\pi(a-1)r}{ad}+O(e^{-c/\sqrt t}).
\end{equation}
\end{corollary}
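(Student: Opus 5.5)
The identity \eqref{primitiveidentity} is Möbius inversion applied to the root filter \eqref{Wanalytic}. Since $eW_{a,e}(q,k)=\sum_{z^e=1}J(z,q)$, and every $e$th root of unity has a unique exact order dividing $e$, we have
\[
 eW_{a,e}(q,k)=\sum_{f\mid e}\Phi_f(q),\qquad \Phi_f(q)=\sum_{\ord(z)=f}J(z,q).
\]
Möbius inversion then gives $\sum_{e\mid d}\mu(d/e)eW_{a,e}=\Phi_d$. This is \eqref{primitiveidentity}. It holds as an identity of meromorphic functions on $0<|q|<1$, because each $J(z,q)$ is meromorphic there and the sums are finite.

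For $d\ge3$, the point $-1$ has order $2\ne d$, so $J(-1,q)$ does not appear in $\mathcal A_d$. A primitive $d$th root has the form $z=e^{2\pi ir/d}$ with $(r,d)=1$, and we choose representatives $|r|<d/2$. The case $|r|=d/2$ occurs only for $d=2$, so for $d\ge3$ every primitive root satisfies $\alpha=r/d\in(-1/2,1/2)$. There are finitely many such $\alpha$, each fixed. Lemma~\ref{twists} therefore says that each denominator of $J(e^{2\pi i\alpha},e^{-t/a})$ is nonzero for $0<t<t_0(\alpha)$. It also gives
\[
 J(e^{2\pi i\alpha},e^{-t/a})=e^{2\pi i(a-1)\alpha/a}+O(e^{-c_0/\sqrt t}).
\]
Taking the minimum of the finitely many $t_0$ and $c_0$ produces an interval $(e^{-t_0/a},1)$ on which $\mathcal A_d$ has no poles, real or otherwise on the real segment. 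On the same interval we get the uniform error term with a constant $c>0$.

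It remains to sum the limiting phases. The primitive representatives come in pairs $\pm r$ with $1\le r<d/2$ and $(r,d)=1$, so the sum of $e^{2\pi i(a-1)r/(ad)}$ over these pairs is $2\sum\cos(2\pi(a-1)r/(ad))$. This proves \eqref{primitivelimit}.

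The only delicate point is the nonuniformity of Lemma~\ref{twists} as $|\alpha|\to1/2$. This is harmless, since for fixed $d\ge3$ we have $|\alpha|\le1/2-1/d$ strictly below $1/2$, and the gap $c_k(\alpha)$ in \eqref{twistgap} is bounded below by $c_k(1/2-1/d)>0$.
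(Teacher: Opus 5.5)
Your proposal is correct and follows the paper's proof essentially step for step: M\"obius inversion over the exact orders of the roots in \eqref{Wanalytic}, exclusion of $\pm1$ for $d\ge3$, Lemma~\ref{twists} applied to the finitely many primitive roots with representatives $|\alpha|<1/2$, and pairing of conjugate roots. One harmless slip is in your last paragraph: for odd $d$ the extreme representative is $|\alpha|=1/2-1/(2d)$ (e.g.\ $\alpha=1/3$ when $d=3$), so the bound $|\alpha|\le1/2-1/d$ fails there, but your argument already takes a minimum over finitely many fixed $\alpha\in(-1/2,1/2)$ and does not need that bound.
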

\begin{proof}
The sum $eW_{a,e}$ contains the contribution of every root whose order divides $e$. In \eqref{Addef}, a root of order $b\mid d$ therefore has coefficient
$\sum_{b\mid e\mid d}\mu(d/e)$, which is $1$ for $b=d$ and $0$ otherwise. This is ordinary M\"obius inversion, as in \cite[Chapter 3]{Stanley}, and proves \eqref{primitiveidentity} as a meromorphic identity. Neither $1$ nor $-1$ is primitive of order $d\ge3$. Lemma~\ref{twists} thus proves that each denominator on the right is nonzero on a sufficiently late real interval. Applying \eqref{singlelimit} and pairing conjugate roots gives \eqref{primitivelimit}.
\end{proof}

\begin{example}\label{primitiveexamples}
For $a=2$ and every fixed integer $k\ge1$, the primitive combinations
in \eqref{Addef} have the following exact limits:
\begin{center}
\begin{tabular}{@{}cl@{\qquad}cl@{}}
\toprule
$d$ & $\displaystyle\lim_{q\to1^-}\mathcal A_d(q)$
& $d$ & $\displaystyle\lim_{q\to1^-}\mathcal A_d(q)$\\
\midrule
3 & $1$ & 12 & $\sqrt6$\\
4 & $\sqrt2$ & 15 & $\sqrt{15+6\sqrt5}$\\
5 & $\sqrt5$ & 16 & $\sqrt{2(2+\sqrt2)(2+\sqrt{2+\sqrt2})}$\\
6 & $\sqrt3$ & 20 & $\dfrac{1+\sqrt5+\sqrt{10+2\sqrt5}}{\sqrt2}$\\[6pt]
8 & $\sqrt{4+2\sqrt2}$ & 24 & $\sqrt{2(2+\sqrt2)(2+\sqrt3)}$\\
10 & $\sqrt{5+2\sqrt5}$ & 30 & $\dfrac{\sqrt3(1+\sqrt5)+\sqrt{10+2\sqrt5}}2$\\[6pt]
\bottomrule
\end{tabular}
\end{center}
The limits use the holomorphic continuations through any removable real
poles of the combinations. In particular, with $(q,k)$ suppressed,
\begin{align*}
 \mathcal A_4&=4W_{2,4}-2W_{2,2},\\
 \mathcal A_{12}&=12W_{2,12}-6W_{2,6}-4W_{2,4}+2W_{2,2},\\
 \mathcal A_{24}&=24W_{2,24}-12W_{2,12}-8W_{2,8}+4W_{2,4}.
\end{align*}
Their limits are independent of the Euler power $k$.

For the evaluations, write
$s_d=2\sum_{1\le r<d/2,\,(r,d)=1}\cos(\pi r/d)$, as in
\eqref{primitivelimit}. The elementary angles and
\[
 2\cos\frac\pi5=\frac{1+\sqrt5}{2},\qquad
 2\cos\frac{2\pi}5=\frac{\sqrt5-1}{2},\qquad
 4\cos\frac\pi{10}=\sqrt{10+2\sqrt5}
\]
give the entries with $d=3,4,5,6$.
The first two identities follow from
$1+2\cos(2\pi/5)+2\cos(4\pi/5)=0$ and the double-angle formula;
the third is the positive half-angle identity.
Sum-to-product and half-angle formulas now give
\begin{align*}
 s_8^2&=4+2\sqrt2,&
 s_{10}^2&=5+2\sqrt5,&
 s_{12}&=\sqrt6,\\
 s_{15}&=\sqrt3\,s_{10},&
 s_{16}&=\frac1{\sin(\pi/16)},\\
 s_{20}&=2\sqrt2\left(\cos\frac\pi5+\cos\frac\pi{10}\right),&&\\
 s_{30}&=2\sqrt3\cos\frac\pi5+2\cos\frac\pi{10}.&&
\end{align*}
For $s_{15}$, pair the indices $1,4$ and $2,7$; for $s_{20}$,
pair $1,9$ and $3,7$; and for $s_{30}$, pair $1,11$ and $7,13$.
The four-term geometric sum gives $s_{16}$, whose square is
$2(2+\sqrt2)(2+\sqrt{2+\sqrt2})$ by rationalizing the half-angle
formula. Finally, pairing $1,11$ and $5,7$ gives
\[
 s_{24}=4\sqrt2\cos\frac\pi8\cos\frac\pi{12}
       =\sqrt{2(2+\sqrt2)(2+\sqrt3)}.
\]
All cosine terms are positive, fixing the signs of the radicals.

More generally, every constant in \eqref{primitivelimit} is a totally
real algebraic integer in $\Q(\zeta_{ad})^+$, where
$\zeta_{ad}=e^{2\pi i/(ad)}$: each paired summand is a root of unity
plus its inverse. These are exact evaluations of the limiting primitive
combinations; no algebraicity of the individual section values is needed.
\end{example}

\begin{corollary}\label{evencancellation}
Fix $a\ge2$ and $k\ge1$, and let $d,e\ge2$ be even. The function
$dW_{a,d}(q,k)-eW_{a,e}(q,k)$ extends holomorphically through every
sufficiently late common real pole. At each such pole,
\begin{equation}\label{exactresidueratio}
 d\,\Res_{q=q_j}W_{a,d}(q,k)
 =e\,\Res_{q=q_j}W_{a,e}(q,k).
\end{equation}
For some $c>0$, its continued values satisfy
\begin{align}\label{evencancellationlimit}
 &dW_{a,d}(e^{-t/a},k)-eW_{a,e}(e^{-t/a},k)\notag\\
 &\qquad=\sin\frac\pi a
   \left\{\cot\frac{\pi(a-1)}{ad}
          -\cot\frac{\pi(a-1)}{ae}\right\}
      +O(e^{-c/\sqrt t})
 \qquad(t\downarrow0).
\end{align}
\end{corollary}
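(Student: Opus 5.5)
The plan is to write both weighted sections through the root filter \eqref{Wanalytic}:
\[
 dW_{a,d}(q,k)=\sum_{z^d=1}J(z,q),\qquad eW_{a,e}(q,k)=\sum_{z^e=1}J(z,q).
\]
Since $d$ and $e$ are even, each sum contains exactly one copy of the alternating quotient $J(-1,q)$. That term therefore cancels identically in $dW_{a,d}-eW_{a,e}$. The difference becomes a finite signed sum of $J(z,q)$ over roots $z\ne-1$, namely the roots of order dividing $d$ minus those of order dividing $e$. The common root $z=1$ also cancels. This is a meromorphic identity on $0<|q|<1$, exactly as in Corollary~\ref{primitive}.

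\textbf{Holomorphy.} For each remaining root, write $z=e^{2\pi i\alpha}$ with $-1/2<\alpha<1/2$. Lemma~\ref{twists} gives a nonvanishing denominator of $J(z,e^{-t/a})$ for all sufficiently small $t>0$. With only finitely many roots, there is a common interval $0<t<t_1$ on which every surviving term is holomorphic in a real neighbourhood. Hence the difference is holomorphic at every sufficiently late real point, in particular through every late common pole $q_j$ of Theorem~\ref{poles}.

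\textbf{Residues.} At such a $q_j$, the nonalternating terms in each of $dW_{a,d}$ and $eW_{a,e}$ are holomorphic, so
\[
 d\Res_{q=q_j}W_{a,d}=\Res_{q=q_j}J(-1,q)=e\Res_{q=q_j}W_{a,e}.
\]
This gives \eqref{exactresidueratio}. The simple pole itself comes from Theorem~\ref{poles}, though it is not needed for the equality.

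\textbf{Limit.} Apply \eqref{singlelimit} to each surviving root, pairing conjugates. Each term contributes its phase $e^{2\pi i(a-1)\alpha/a}$ up to $O(e^{-c/\sqrt t})$; take $c$ to be the minimum of the constants $c_0$ over the finitely many $\alpha$, which is positive since $|\alpha|<1/2$ for each. The main term is
\[
 \Sigma_d-\Sigma_e,\qquad \Sigma_d=\sum_{r=-d/2+1}^{d/2-1}e^{2\pi i(a-1)r/(ad)}.
\]
The proof of Theorem~\ref{radial} evaluates
\[
 \Sigma_d=\sin(\pi/a)\cot\frac{\pi(a-1)}{ad}+\cos(\pi/a).
\]
The $\cos(\pi/a)$ terms cancel in the difference, leaving \eqref{evencancellationlimit}.

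\textbf{Main obstacle.} The only delicate point is bookkeeping rather than analysis. We must check that the representatives $r$ with $|r|<d/2$ correspond to $\alpha=r/d\in(-1/2,1/2)$, so that Lemma~\ref{twists} applies with the stated phases. We must also confirm the Dirichlet-kernel evaluation of $\Sigma_d$, using the sum over $|r|\le d/2$ minus the endpoint $-1$ contribution.
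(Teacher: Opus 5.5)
Your proposal is correct and follows essentially the same route as the paper. You cancel the single alternating term $J(-1,q)$ from the two root sums, use Lemma~\ref{twists} to make the finitely many remaining quotients holomorphic near every late real point (which gives the residue identity), and subtract the even-order phase sums $\sin(\pi/a)\cot\frac{\pi(a-1)}{ad}+\cos(\pi/a)$ after applying \eqref{singlelimit}; the only cosmetic correction is that nonvanishing of each denominator at a real point gives holomorphy in a complex neighbourhood, not merely a real one.
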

\begin{proof}
In the two root sums \eqref{Wanalytic}, the alternating term $J(-1,q)$
occurs once and cancels exactly in the difference. Every remaining
denominator is nonzero on a sufficiently late real interval by
Lemma~\ref{twists}, so the difference is holomorphic near every pole in
that interval. Taking residues proves \eqref{exactresidueratio}.
For a section of even order $d$, the sum of the remaining limiting
phases is
\[
 \sin(\pi/a)\cot\frac{\pi(a-1)}{ad}+\cos(\pi/a).
\]
Subtract this expression at orders $d$ and $e$ and apply
\eqref{singlelimit} to the finitely many remaining roots. This gives
\eqref{evencancellationlimit}, including at the removable points.
\end{proof}

\section{Approximation to classical modular values}
We now take $F=\Phi^{24}$. The target $T$ is the modular quotient in \eqref{targetmod}. The first theorem gives a family of CM points at which the section convergence has an explicit bound.

\begin{theorem}\label{CMtheorem}
For every integer $a\ge2$, put $q_a=e^{-2\pi/\sqrt a}$ and $\lambda_a=49e^{-4\pi\sqrt a}$. Then $T(q_a)=a^6$, every $W_{a,d}(q_a,24)$ has no pole at $q_a$, and
\begin{equation}\label{CMbound}
 \left|W_{a,d}(q_a,24)-a^6\right|
 \le\frac{4q_a^{2-2a}}{1-q_a}
        \frac{\lambda_a^d}{1-\lambda_a^d}
 \qquad(d\ge2).
\end{equation}
\end{theorem}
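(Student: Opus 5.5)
The identity $T(q_a)=a^6$ comes from the modular transformation, and the bound \eqref{CMbound} from a Cauchy estimate for $z\mapsto J(z,q_a)$ on one explicit circle. That circle is much larger than the zero-free disk of Theorem~\ref{zerofree}. The rate $\lambda_a=49Q^2$, with $Q=q_a^a=e^{-2\pi\sqrt a}$, indicates the radius. Theorem~\ref{analyticpole} places the only pole near $z_0\approx-1/(23Q^2)$. So I would work in the cleared variable $x=zQ^2$ and use the circle $|x|=1/49$. This circle lies strictly inside the pole, since $1/49<1/23$.

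\emph{Step 1: the target value.} With $\tau=i/\sqrt a$ one has $a\tau=i\sqrt a=-1/\tau$, so $\Delta(a\tau)=\tau^{12}\Delta(\tau)=a^{-6}\Delta(\tau)$. By \eqref{targetmod}, $T(q_a)=a^6$.

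\emph{Step 2: the cleared form.} Write $J(z,q)=q^{1-a}N(zQ^2,q)/M(zQ^2,q)$ using \eqref{Ndef}--\eqref{Mdef} with $f_n$ the coefficients of $\Phi^{24}$, as in the proof of Theorem~\ref{analyticpole}. At $q=q_a$ we have $Q\le e^{-2\pi\sqrt2}<2\cdot10^{-4}$.

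I would show that on $|x|\le1/49$
\[
 |M(x,q_a)-(1+23x)|\le\eta
\]
for an explicit small $\eta$. Every term of the $n\ge2$ sum in $M$ carries a factor $Q^{n-1}$. The denominators satisfy $|1-xQ^{n-1}|\ge1-Q/49$. The coefficients $|f_n|$ are bounded by those of $\prod(1-q^n)^{-24}$. The remaining terms $(1-x)(F(Q)-1)$ and $-24x(Q-1)-23x$ are $O(Q)$ with explicit constants. Since $|1+23x|\ge26/49$ on the circle, $M$ has no zeros on the closed disk, and $|M|\ge1/2$ there with room to spare.

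In parallel I would bound $|q^{a-1}N|$ on the same circle. The main term is $-f_1x(q^a-1)=24x(q^a-1)$ plus $(1-x)q^{a-1}F(q)$. The $n\ge2$ tail has denominators $1-xq^{a(n-1)}$ and is dominated by a geometric series in $q$, which produces the factor $1/(1-q)$. The target inequality is
\[
 |J(z,q_a)|\le\frac{4q_a^{2-2a}}{1-q_a}\qquad\text{on } |z|=\lambda_a^{-1}.
\]
This is a crude bound: roughly $|N|/|M|\le 2\cdot(\text{const}/(1-q))\,q^{1-a}$, times the prefactor $q^{1-a}$.

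\emph{Step 3: the section error.} $J(\cdot,q_a)$ is holomorphic on $|z|\le\lambda_a^{-1}$. This disk contains the unit circle because $\lambda_a<1$. Hence no section has a pole at $q_a$, and $J(0,q_a)=T(q_a)$. Writing $J=\sum b_nz^n$, Cauchy gives $|b_n|\le\max|J|\,\lambda_a^n$. The root filter \eqref{Wanalytic} then yields
\[
 W_{a,d}-T=\sum_{j\ge1}b_{jd},
\]
and the geometric sum gives $\max|J|\,\lambda_a^d/(1-\lambda_a^d)$, which is \eqref{CMbound}.

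The main obstacle is Step 2. It requires honest explicit numerical control of the Lambert tails of $\Phi^{24}$, uniformly in $a\ge2$, so that the constants $49$ and $4$ actually close. I would first try majorizing $|f_n|$ by the partition-type coefficients of $\Phi^{-24}$, evaluated at $q_a\le e^{-\pi\sqrt2}$. I would keep the $a$-dependence only through the monotone quantities $Q$ and $q_a$, checking the worst case $a=2$ numerically.
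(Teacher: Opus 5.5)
Your Steps~1 and~3 match the paper, and so does your circle: $|x|=1/49$ is $|X|=(49Q)^{-1}$, that is, $|z|=\lambda_a^{-1}$. The denominator half of Step~2 also works, since only powers of $Q\le e^{-2\pi\sqrt2}$ enter $M$. The gap is the numerator bound. All of the uniformity in $a$ lives there, and the mechanism you propose cannot supply it.

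First, the monotonicity is reversed. $q_a=e^{-2\pi/\sqrt a}$ increases to $1$, so $q_a\ge e^{-\pi\sqrt2}$, and $a=2$ is the \emph{easiest} case for the numerator.

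Second, the tail of $q^{a-1}N$ is $x(1-x)\sum_{n\ge2}f_nq^{n-1}(Q^n-1)/(1-xQ^{n-1})$. Majorizing $|f_n|$ by the coefficients of $\Phi^{-24}$ bounds it only by about $\frac1{48}q_a^{-1}\Phi(q_a)^{-24}$. By the Fricke identity of Step~1, $\Phi(q_a)^{-24}=q_a/(a^6Q\Phi(Q)^{24})$, which is of size $a^{-6}e^{2\pi\sqrt a}$. That is exponentially larger than the required $O\bigl((1-q_a)^{-1}\bigr)$. Already at $a=9$ your route bounds $|q^{a-1}N/M|$ only by about $11$, against the needed $4/(1-q_9)\approx4.6$.

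The exact coefficients $f_n=\tau(n+1)$ do not rescue a termwise estimate either. The sum $\sum_n|f_n|q_a^n$ dominates $\max_{|w|=q_a}|\Phi(w)|^{24}$, which grows at least like $(1-q_a)^{-6}$ because $(\operatorname{Im}\tau)^6|\Delta(\tau)|$ is $\mathrm{SL}_2(\Z)$-invariant. The true tail is moderate only through massive cancellation among the signed $f_n$; note that $F(q_a)=a^6Q\Phi(Q)^{24}/q_a$ is exponentially small. A geometric series in $q$ yields $1/(1-q)$ only when the coefficients are bounded, and these are not.

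The missing idea is to estimate values of the product rather than its coefficients. Expand $1/(1-xQ^{n-1})$ and sum over $n$ first. The $n$-sum becomes $\sum_{j\ge0}x^j\bigl(q^{-1}Q^{-j}\{F(qQ^{j+1})-F(qQ^j)\}-24(1-Q)\bigr)$. This is just the once-cleared form of Lemma~\ref{lambert}, $(1-X)U=F(q)+\sum_{m\ge1}\{F(qQ^m)-F(qQ^{m-1})\}X^m$, which the paper uses directly.

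Since $q_a$ is real and positive, every argument lies in $[0,1)$. There $F=\Phi^{24}$ is decreasing and $0\le1-F(y)\le24y/(1-y)$. Hence $0\le F(qQ^m)-F(qQ^{m-1})\le24qQ^{m-1}/(1-q)$, and the factor $(1-q_a)^{-1}$ comes uniformly in $a$ from $1-qQ^{m-1}\ge1-q$.

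On $|X|=(49Q)^{-1}$ this gives $|(1-X)U|\le1+q/(2Q(1-q))$. The same telescoping gives $|(1-X)V|\ge(1/2-25Q)/(1-Q)>0.49$. Since $q/(Q(1-q))\ge1$, the quotient $R$ is at most $4q/(Q(1-q))$ there. This is exactly your target $|J|\le4q_a^{2-2a}/(1-q_a)$ on $|z|=\lambda_a^{-1}$, after which your Step~3 finishes the proof.
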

\begin{proof}
At $\tau=i/\sqrt a$, one has $a\tau=-1/\tau$. The classical weight-twelve transformation
$\Delta(-1/\tau)=\tau^{12}\Delta(\tau)$ gives
$\Delta(a\tau)=a^{-6}\Delta(\tau)$, hence $T(q_a)=a^6$. This use of the Fricke fixed point is a standard modular evaluation; see \cite{Serre,Zagier}.

For the bound write $q=q_a$, $Q=q^a$, and $r=(49Q)^{-1}$. Then
\[
 0<Q\le e^{-2\pi\sqrt2}<\frac1{7000},\qquad rQ=\frac1{49}.
\]
For $0\le x<1$, the factors in $F(x)=\Phi(x)^{24}$ lie in $[0,1]$. The inequality $1-\prod b_n\le\sum(1-b_n)$ and $1-(1-u)^{24}\le24u$ give
\begin{equation}\label{realproductbound}
 0\le1-F(x)\le\frac{24x}{1-x}.
\end{equation}
Also $F$ is decreasing on $[0,1)$. The cleared denominator has the analytic continuation
\[
 (1-X)V=F(Q)+\sum_{m\ge1}\{F(Q^{m+1})-F(Q^m)\}X^m
\]
for $|X|<Q^{-1}$. On $|X|\le r$, its nonconstant part has absolute value at most
\[
 \frac{24}{1-Q}\sum_{m\ge1}(Qr)^m=\frac1{2(1-Q)}.
\]
Together with \eqref{realproductbound}, this gives
\begin{equation}\label{Vlower}
 |(1-X)V|\ge\frac{1/2-25Q}{1-Q}>0.49.
\end{equation}
For the cleared numerator, the same argument uses
$F(qQ^m)-F(qQ^{m-1})\le24qQ^{m-1}/(1-q)$ and yields
\[
 |(1-X)U|\le1+\frac{24qr}{1-q}\sum_{m\ge0}(Qr)^m
            =1+\frac{q}{2Q(1-q)}.
\]
Thus the quotient $R$ extends holomorphically to $|X|\le r$, and
\begin{equation}\label{Rbound}
 \sup_{|X|=r}|R(X,q)|\le\frac{4q}{Q(1-q)}.
\end{equation}
Here $q/(Q(1-q))\ge1$, so the final numerical constant follows directly from \eqref{Vlower}. The point $X=1$ is removable in the quotient; the cleared expressions define its continuation there.

Cauchy's estimate gives $|C_m(q)|\le4q\,r^{-m}/(Q(1-q))$. Since $Q<r$, the root samples $X=zQ$, $z^d=1$, are inside this disk, and the convergent coefficient filter agrees with \eqref{Wanalytic}. Therefore
\begin{align*}
 |W_{a,d}(q,24)-T(q)|
 &\le q^{1-a}\frac{4q}{Q(1-q)}
                  \sum_{j\ge1}(Q/r)^{jd}\\
 &=\frac{4q^{2-2a}}{1-q}
                  \frac{(49Q^2)^d}{1-(49Q^2)^d},
\end{align*}
which is \eqref{CMbound}.
\end{proof}

The varying-dilation regime has $q_a\to1$ but $q_a^a\to0$, and differs from the fixed-dilation radial limit. Its leading error has an explicit constant and sign.

\begin{proposition}\label{CMleading}
For each fixed integer $d\ge2$, put $q=q_a=e^{-2\pi/\sqrt a}$ and
$Q=q_a^a=e^{-2\pi\sqrt a}$. As the integer $a$ tends to infinity,
\begin{equation}\label{CMleadingerror}
 W_{a,d}(q_a,24)-a^6
 =24q_a(q_a-1)(-23)^{d-2}Q^{2d-2}
       +O_d(a^6Q^{2d-1}).
\end{equation}
Consequently,
\begin{equation}\label{CMsigned}
 W_{a,d}(q_a,24)-a^6
 \sim48\pi(-1)^{d-1}23^{d-2}a^{-1/2}
          e^{-4\pi(d-1)\sqrt a}.
\end{equation}
\end{proposition}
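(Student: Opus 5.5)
The plan is to work with the cleared forms of Lemma~\ref{lambert} in the rescaled variable $Y=XQ$, where $Q=q_a^a=e^{-2\pi\sqrt a}\to0$ while $q=q_a\to1$. Since $T(q_a)=a^6$ by Theorem~\ref{CMtheorem}, we have
\[
W_{a,d}(q_a,24)-a^6=q^{1-a}\sum_{j\ge1}C_{jd}(q)Q^{jd}.
\]
The terms with $j\ge2$ will be discarded using the Cauchy bound $|C_m|\le 4q\,r^{-m}/(Q(1-q))$ with $r=(49Q)^{-1}$, from the proof of Theorem~\ref{CMtheorem}. Their total is $O(a^{1/2}q^{1-a}Q^{-1}(49Q^2)^{2d})$. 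Because $q^{1-a}=q/Q$, this is $O(a^{1/2}Q^{4d-2})$, far inside $O_d(a^6Q^{2d-1})$. So everything reduces to an asymptotic for $C_d(q)$.

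For $C_d$, I would use the exact continuations
\[
(1-X)V=F(Q)+\sum_{m\ge1}\{F(Q^{m+1})-F(Q^m)\}X^m,
\qquad
(1-X)U=F(q)+\sum_{m\ge1}\{F(qQ^m)-F(qQ^{m-1})\}X^m,
\]
with $F=\Phi^{24}$. Every argument of $F$ except the single value $F(q)$ is at most $Q$. I will therefore use the uniform expansion $F(x)=1-24x+O(x^2)$ for $0\le x\le Q$, with an absolute implied constant. In the variable $Y$ this gives
\[
(1-X)V=\frac{1+23Y}{1-Y}+O(Q),
\]
uniformly on a fixed circle $|Y|=\rho<1/23$. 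For the numerator, the $m=1$ term is kept exactly as $\{F(qQ)-F(q)\}X=(1-F(q))Y/Q+O(qY)$. For $m\ge2$, $F(qQ^m)-F(qQ^{m-1})=24qQ^{m-1}+O(Q^m)$, so
\[
(1-X)U=F(q)+\frac{1-F(q)}{Q}\,Y+\frac{24q}{Q}\,\frac{Y^2}{1-Y}+O(1).
\]
Dividing and extracting $[Y^d]$ by Cauchy's formula on $|Y|=\rho$, the main part is $Q^{-1}$ times
\[
(1-F(q))\,[Y^{d-1}]\frac{1-Y}{1+23Y}+24q\,[Y^{d}]\frac{Y^2}{1+23Y}
=24\,(q-1+F(q))(-23)^{d-2}.
\]
The $O(Q)$ and $O(1)$ perturbations contribute $O_d(1)$ to $[Y^d]R$. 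This is relatively $O(Q)$ against the $Q^{-1}$ main term, but it must be compared with the small factor $q-1\asymp a^{-1/2}$; it is still negligible, since the target error is relative $O(a^{6.5}Q)$.

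Next I translate back. The relation $C_dQ^d=[Y^d]R$ gives
\[
q^{1-a}C_dQ^d=24q\,(q-1)(-23)^{d-2}Q^{2d-2}+24q\,F(q)(-23)^{d-2}Q^{2d-2}+O_d(Q^{2d-1}).
\]
The second term is where the $a^6$ in the remainder comes from. The identity $T(q_a)=a^6$ gives $F(q)=a^6Q\,F(Q)/q$, so that term is $O_d(a^6Q^{2d-1})$. This proves \eqref{CMleadingerror}. Then \eqref{CMsigned} follows from $q_a-1\sim-2\pi a^{-1/2}$ and $Q^{2d-2}=e^{-4\pi(d-1)\sqrt a}$, since $24\cdot(-2\pi)(-23)^{d-2}=48\pi(-1)^{d-1}23^{d-2}$. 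The main term dominates the remainder because $a^{6.5}Q\to0$.

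The main obstacle is uniformity: $q\to1$ while $Q\to0$. The sum over $m\ge2$ in the numerator involves $q$ only through $qQ^{m-1}\le Q$, and the troublesome $1-F(q)$ appears only in the $m=1$ term. That term must be kept exact rather than expanded, and that is exactly where the cancellation producing the factor $q-1$ occurs. I will write $R=A/B$ with $A,B$ the cleared forms. Then I bound the remainders $A-A_0$ and $B-B_0$ on $|Y|=\rho$ explicitly, using \eqref{realproductbound} together with the second-order bound $|F(x)-1+24x|\le Cx^2$ for $x\le Q$. The lower bound $|B|\ge c>0$ on that circle is obtained as in \eqref{Vlower}. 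This keeps every implied constant independent of $a$.
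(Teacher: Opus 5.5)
Your proof is correct. The reduction to $C_d$ and the Cauchy tail bound for $j\ge2$ are the same as in the paper, but the central coefficient estimate takes a different route.

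\textbf{The paper's route.} The paper gets $C_d$ from the recursion \eqref{CMrecursion} by induction on $m$. This yields $C_m=24(q-1)(-23)^{m-2}Q^{m-1}+O_m(a^6Q^m)$ for every fixed $m\ge2$, using the identity $1-24\sum_{\ell=0}^{m-3}(-23)^\ell=(-23)^{m-2}$.

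\textbf{Your route.} You approximate both cleared forms, uniformly on a fixed circle $|Y|=\rho<1/23$, by explicit rational functions in $Y=XQ$ (the paper's variable $x$). You then extract $[Y^d]$ with a single Cauchy estimate. This buys three things.
\begin{itemize}
\item The factor $(-23)^{d-2}$ is explained: it comes from the pole of $(1-Y)/(1+23Y)$ at $Y=-1/23$. That pole is the $Q\to0$ limit of the root $x_0$ of $M$ in Theorem~\ref{movingpole}, and $M$ depends on $q$ only through $Q$.
\item The cancellation producing $q-1$ appears in one line, as $q-(1-F(q))$.
\item You show where the factor $a^6$ enters. Your $O_d(1)$ error in $[Y^d]R$ is uniform in $a$, so your argument actually gives
\[
 W_{a,d}(q_a,24)-a^6=24q_a(q_a-1)(-23)^{d-2}Q^{2d-2}+24a^6(-23)^{d-2}Q^{2d-1}+O_d(Q^{2d-1}).
\]
The $a^6$ part of the remainder is thus an explicit term coming only from $F(q)=a^6QF(Q)/q$.
\end{itemize}

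\textbf{What each costs.} You need analyticity and a lower bound for the cleared denominator on the circle. Both are immediate from $(1+23Y)/(1-Y)+O(Q)$, or from \eqref{Vlower} if you take $\rho\le1/49$. For the main coefficient, the paper's recursion needs only the inversion of $B_0=1+O(Q)$, and it gives all $C_m$ at once.

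\textbf{One slip.} Since $X^m=Q^{-m}Y^m$, the correct relation is $C_d=Q^d[Y^d]R$, not $C_dQ^d=[Y^d]R$. Your subsequent display for $q^{1-a}C_dQ^d$ is the one that follows from the corrected relation.
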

\begin{proof}
Write $F=\Phi^{24}$ and
\[
 (1-X)U=\sum_{m\ge0}A_mX^m,\qquad
 (1-X)V=\sum_{m\ge0}B_mX^m,\qquad
 R=\sum_{m\ge0}C_mX^m.
\]
The Fricke identity in Theorem~\ref{CMtheorem} gives the exact equalities
\[
 F(q)=\frac{a^6Q}{q}F(Q),\qquad C_0=\frac{a^6Q}{q}.
\]
The cleared coefficients are
\begin{align*}
 A_0&=F(q),& A_m&=F(qQ^m)-F(qQ^{m-1}),\\
 B_0&=F(Q),& B_m&=F(Q^{m+1})-F(Q^m)\qquad(m\ge1).
\end{align*}
Since $F(v)=1-24v+O(v^2)$ near zero, these formulas imply
\begin{align*}
 B_0&=1+O(Q),&
 B_m&=24Q^m+O_m(Q^{m+1})\quad(m\ge1),\\
 A_1&=1+O(a^6Q),&
 A_m&=24qQ^{m-1}+O_m(Q^m)\quad(m\ge2).
\end{align*}
All constants here are uniform as $a\to\infty$: the arguments containing
$Q$ lie in a fixed disk about zero, and $q$ is bounded away from zero.
Coefficient comparison gives
\begin{equation}\label{CMrecursion}
 B_0C_m=A_m-\sum_{j=1}^mB_jC_{m-j}.
\end{equation}
It follows that $C_1=1+O(a^6Q)$ and, for each fixed $m\ge2$,
\begin{equation}\label{CMcoefficient}
 C_m=24(q-1)(-23)^{m-2}Q^{m-1}+O_m(a^6Q^m).
\end{equation}
Indeed, in \eqref{CMrecursion} the term $B_{m-1}C_1$ contributes
$24Q^{m-1}$, the term $B_mC_0$ is $O_m(a^6Q^{m+1})$, and the induction
hypothesis for $2\le m-j\le m-1$ gives the leading coefficient
\[
 24(q-1)\left\{1-24\sum_{\ell=0}^{m-3}(-23)^\ell\right\}
 =24(q-1)(-23)^{m-2}.
\]
The sum is empty when $m=2$. All errors have size $O_m(a^6Q^m)$,
including multiplication by $B_0^{-1}=1+O(Q)$, which proves the induction.

The convergent root filter in the proof of Theorem~\ref{CMtheorem} gives
\[
 W_{a,d}(q,24)-a^6=\frac qQ\sum_{j\ge1}C_{jd}Q^{jd}.
\]
The term $j=1$ is \eqref{CMleadingerror}, by \eqref{CMcoefficient}.
To control all remaining terms uniformly, use \eqref{Rbound} on
$|X|=(49Q)^{-1}$. Cauchy's estimate bounds their contribution by
\[
 \frac{4q^2}{Q^2(1-q)}
       \frac{(49Q^2)^{2d}}{1-(49Q^2)^d}
 =O_d\!\left(\sqrt a\,Q^{4d-2}\right)
 =O_d(a^6Q^{2d-1}).
\]
This proves \eqref{CMleadingerror}. Finally,
$q_a(1-q_a)\sim2\pi a^{-1/2}$ and
$a^{13/2}Q\to0$, so the error is smaller than the main term and
\eqref{CMsigned} follows.
\end{proof}

For example, the leading errors for $d=2$ and $d=3$ are respectively
\[
 -48\pi a^{-1/2}e^{-4\pi\sqrt a},\qquad
 1104\pi a^{-1/2}e^{-8\pi\sqrt a}.
\]
These formulas concern increasing dilation. The explicit bound
\eqref{CMbound} remains valid for every $a\ge2$ and $d\ge2$.

For additional CM examples, including negative nomes, the following complex-disk estimate is convenient.

\begin{lemma}\label{smallnome}
If $0<|q|\le1/20$, then all level-two sections with $k=24$ have no pole and
\begin{equation}\label{smallnomebound}
 \left|W_{2,d}(q,24)-q^{-1}\frac{\Phi(q)^{24}}{\Phi(q^2)^{24}}\right|
 \le\frac{16}{|q|}\frac{(3|q|^2)^d}{1-(3|q|^2)^d}
 \qquad(d\ge2).
\end{equation}
\end{lemma}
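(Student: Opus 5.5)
The plan is to repeat the Cauchy-estimate argument of Theorem~\ref{CMtheorem}, now with $a=2$, complex $q$ in the punctured disk $0<|q|\le1/20$, and $Q=q^2$, so that $|Q|\le1/400$. The sampling radius will be $r=(3|Q|)^{-1}$, chosen so that $|Q|/r=3|q|^4$ in the coefficient variable $X=zQ$. Throughout, $F=\Phi^{24}$ is used on complex arguments of modulus below one.

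The first step is a complex replacement for \eqref{realproductbound}. For $|x|<1$, each factor satisfies $|1-x^n|\le1+|x|^n$. Expanding the product then gives
\[
 |F(x)-1|\le\prod_n(1+|x|^n)^{24}-1\le\exp\!\left(\frac{24|x|}{1-|x|}\right)-1.
\]
We also need a bound on differences. The Cauchy estimate on the disk of radius $1/2$ gives $|F'|\le C$ on $|x|\le1/4$, and from this
\[
 |F(u)-F(w)|\le C|u-w| \quad\text{for } |u|,|w|\le1/20.
\]
This yields $|F(Q^{m+1})-F(Q^m)|\le C|Q|^m$ and $|F(qQ^m)-F(qQ^{m-1})|\le C|q||Q|^{m-1}$.

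With these in hand, we use the cleared forms from the proof of Theorem~\ref{CMtheorem},
\[
 (1-X)V=F(Q)+\sum_{m\ge1}\{F(Q^{m+1})-F(Q^m)\}X^m,
\]
and the analogous expression for $(1-X)U$. On $|X|=r$ the nonconstant part of $(1-X)V$ is at most $C\sum_m 3^{-m}=C/2$. Meanwhile $|F(Q)|\ge1-(e^{24/399}-1)$. Provided $C$ is made explicit and small enough, $(1-X)V$ is bounded below away from zero, and this excludes poles on $|X|\le r$. In the same way,
\[
 |(1-X)U|\le|F(q)|+\frac{C|q|r}{1-|Q|r},
\]
which is of order $1/|q|$. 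Dividing the two bounds gives $\sup_{|X|=r}|R|\le c_1/|q|$.

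To finish, Cauchy's estimate gives $|C_m|\le(c_1/|q|)r^{-m}$. Inserting this into the filter formula
\[
 P_d-T=q^{-1}\sum_{j\ge1}C_{jd}Q^{jd}
\]
(from \eqref{Pdef}, where the factor $q^{1-a}$ equals $q^{-1}$ at $a=2$), and using $|Q|/r=3|Q|^2=3|q|^4$, produces a geometric series in $(3|q|^4)^d$. Since $(3|q|^4)^d\le(3|q|^2)^d$, this is weaker than what we obtain, so \eqref{smallnomebound} follows provided the constants combine to at most $16/|q|$. The same Cauchy bound gives convergence of the root filter, so the formal and analytic sections agree as in \eqref{Wanalytic}.

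I expect the main obstacle to be the constant bookkeeping: making the Lipschitz constant $C$ explicit, with $C\approx24$ plus small corrections, so that the lower bound on the denominator stays positive and the final constant stays at most $16$. I would first try the crude derivative bound
\[
 |F'(x)|\le24\sum_n n|x|^{n-1}\prod_n(1+|x|^n)^{24}
\]
on $|x|\le1/20$. If that is too lossy, the generous slack between $3|q|^4$ and the stated $3|q|^2$ can be used to shrink the radius $r$, which trades the exponent for better constants.
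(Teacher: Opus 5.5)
\textbf{Gap: your disk contains a zero of the denominator.} The step that fails is the lower bound for $(1-X)V$ on $|X|\le r=(3|Q|)^{-1}$. The problem is structural, not constant bookkeeping. Put $y=QX$. Since $F(Q^{m+1})-F(Q^m)=24Q^m(1-Q)+O(|Q|^{2m})$, the cleared denominator satisfies
\[
 F(Q)+\sum_{m\ge1}\frac{F(Q^{m+1})-F(Q^m)}{Q^m}\,y^m
 =\frac{1+23y}{1-y}+O(|Q|)\qquad(|y|\le1/3).
\]
On $|y|=1/3$ the main term has modulus at least $5$, while for $|Q|\le1/400$ the error is far smaller. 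By Rouch\'e's theorem the denominator therefore has a zero near $y=-1/23$, that is, at $X\approx-1/(23Q)$, well inside your disk. This is the moving pole of Theorems~\ref{movingpole} and \ref{analyticpole}, with $x_0(0)=1/(1+f_1)=-1/23$ and $x=QX$; for small $q$, Theorem~\ref{analyticpole} also rules out numerator cancellation there. So $R$ is genuinely not holomorphic on $|X|\le r$, and no choice of $C$ can help. Any admissible $C$ is at least about $24$, because $F(Q^2)-F(Q)=24Q+O(|Q|^2)$. That single term $\{F(Q^2)-F(Q)\}X$ already has modulus about $8$ on $|X|=r$, against $|F(Q)|\approx1$. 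Consistently, your intermediate bound of order $(3|q|^4)^d$ is false for large $d$: for small positive $q$, \eqref{geometricerror} shows the error decays only at the rate $|z_0|^{-1}=23q^4(1+O(q^2))$.

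\textbf{Repair and comparison with the paper.} Your fallback of shrinking $r$ does fix the argument, but only once $|Q|r<1/23$, and with triangle-inequality bounds somewhat below that. On $|x|\le1/400$ one has $|F'|\le26$ and $|F(Q)|\ge2-e^{24/399}$. Taking $|Q|r=1/49$, as in Theorem~\ref{CMtheorem}, then gives $|(1-X)V|\ge0.39$ and $|R|\le c_1/|q|$ with $c_1<6$. The error is then at most $c_1|q|^{-2}(49|q|^4)^d/(1-(49|q|^4)^d)$. This lies below the right side of \eqref{smallnomebound} because $49|q|^2/3<1$, $d\ge2$ and $|q|^3\le1/8000$. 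The paper avoids difference estimates altogether. It works with the uncleared $U$ and $V$ on the fixed disk $|X|\le1/3$, where \eqref{complexproductbound} alone gives $|V|>0.43$ and $|U|<6$. Hence $|R|<16$ and $|C_m|\le16\cdot3^m$, which yields exactly the stated constant and base $3|q|^2$. The repaired cleared-form route gives a faster decay rate, but any disk argument of this type is capped by the pole at $|X|\approx1/(23|Q|)$.
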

\begin{proof}
For $|z|<1$, expansion of the product and the triangle inequality give
\begin{equation}\label{complexproductbound}
 |\Phi(z)^{24}|\le e^{24|z|/(1-|z|)},\qquad
 |\Phi(z)^{24}-1|\le e^{24|z|/(1-|z|)}-1.
\end{equation}
Put $u=|q|$. On $|X|\le1/3$, the original denominator in \eqref{Rdef} satisfies
\[
 |V(X,q)|\ge2-e^{24u^2/(1-u^2)}
              -\frac12e^{24u^4/(1-u^4)}>0.43.
\]
The last inequality follows by using $u\le1/20$; for example
$e^{24/399}<1.063$ and $e^{24/159999}<1.001$ suffice. The numerator satisfies
\[
 |U(X,q)|\le\frac32e^{24u/(1-u)}<6.
\]
Consequently $|R(X,q)|<16$ on this disk. Cauchy's estimate gives $|C_m(q)|\le16\cdot3^m$. The samples $X=zq^2$ lie inside the disk, so \eqref{rootfilter} applies analytically. Summing the selected coefficient bound proves \eqref{smallnomebound}.
\end{proof}

Let $u(\tau)=\Delta(\tau)/\Delta(2\tau)$. The classical level-two relation is
\begin{equation}\label{jrelation}
 j(\tau)=\frac{(u(\tau)+256)^3}{u(\tau)^2}.
\end{equation}
It is Maier's rational parametrization \cite[eq.~(1.6)]{Maier} after replacing his parameter $t_2$ by $4096/u$. The transformation of $\Delta$ and the invariance of $j$ give, as in Corollary~\ref{modularthreshold},
\begin{equation}\label{CMdoubling}
 u\!\left(-\frac1{2\tau}\right)=\frac{4096}{u(\tau)},
 \qquad j(2\tau)=\frac{(u(\tau)+16)^3}{u(\tau)}.
\end{equation}
These classical identities give the exact targets below; the individual section values are not asserted to be algebraic.

\begin{proposition}\label{CMtable}
For every point in the table, with $q=e^{2\pi i\tau}$,
$W_{2,d}(q,24)\to u(\tau)$ as $d\to\infty$, with the bound \eqref{smallnomebound}.
\begin{center}
\begin{tabular}{@{}ll@{\qquad}ll@{}}
\toprule
$\tau$ & $u(\tau)$ & $\tau$ & $u(\tau)$\\
\midrule
$i/2$ & $8$ & $2i$ & $1024\sqrt2(1+\sqrt2)^6$\\
$i/\sqrt2$ & $64$ & $i\sqrt7$ & $4096(8+3\sqrt7)^3$\\
$i\sqrt3/2$ & $4(2+\sqrt3)^3$ & $(1+i)/2$ & $-64$\\
$i$ & $512$ & $(1+i\sqrt2)/2$ & $-8(1+\sqrt2)^3$\\
$i\sqrt7/2$ & $(8+3\sqrt7)^3$ & $(1+i\sqrt3)/2$ & $-256$\\
$i\sqrt2$ & $512(1+\sqrt2)^3$ & $(1+i\sqrt7)/2$ & $-4096$\\
$i\sqrt3$ & $1024(2+\sqrt3)^3$ & &\\
\bottomrule
\end{tabular}
\end{center}
The same conclusion holds at the following three points:
\begin{align*}
 u\left(\tfrac12+i\right)
   &=-2\sqrt2(1+\sqrt2)^6,\\
 u\left(\tfrac12+i\sqrt3\right)
   &=-(3\sqrt6-5\sqrt2)
       \bigl[(1+\sqrt2)(\sqrt2+\sqrt3)\bigr]^6,\\
 u(2i\sqrt3)
   &=1024(3\sqrt6+5\sqrt2)
       \bigl[(1+\sqrt2)(\sqrt2+\sqrt3)\bigr]^6.
\end{align*}
\end{proposition}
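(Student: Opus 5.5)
The plan has two independent parts. The first is to verify that every listed point lies in the disk where Lemma~\ref{smallnome} applies. The second is to identify the limit $T(q)$ with the tabulated value of $u(\tau)$.

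\textbf{Convergence.} For $q=e^{2\pi i\tau}$ we have $|q|=e^{-2\pi\Im\tau}$. Every listed $\tau$ has $\Im\tau\ge1/2$, and $e^{-\pi}<0.044<1/20$. Hence Lemma~\ref{smallnome} applies at each point: no section has a pole there, and the bound \eqref{smallnomebound} holds. Since $(3|q|^2)^d\to0$, this gives $W_{2,d}(q,24)\to q^{-1}\Phi(q)^{24}/\Phi(q^2)^{24}$. By \eqref{targetmod} with $a=2$ this limit is $\Delta(\tau)/\Delta(2\tau)=u(\tau)$.

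The points with $\Re\tau=0$ give real $q\in(0,1/20]$. The points with $\Re\tau=1/2$ give $q=-e^{-2\pi\Im\tau}$, which is negative real. In both cases $u(\tau)=q^{-1}\prod_{n\ge1}(1+q^n)^{-24}$ is real. Its sign is the sign of $q$, and its size is $|q|^{-1}(1+O(|q|))$. This sign and size information is what I will use to choose roots below.

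\textbf{Evaluation of $u(\tau)$.} The anchor is the Fricke fixed point $\tau=i/\sqrt2$. There \eqref{CMdoubling} gives $u^2=4096$, and positivity gives $u=64$. For the other points I will feed the classical singular moduli into \eqref{jrelation}, or into its doubled form in \eqref{CMdoubling}, and then pick the correct root of the resulting cubic.

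At $\tau=i$, substituting $j=1728$ into \eqref{jrelation} gives $(u+256)^3=1728u^2$. The value $u=512$ is a root, and the other roots are discarded using $u\approx e^{2\pi}\approx535$. The involution $u\mapsto4096/u$ then gives $u(i/2)=8$.

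The same procedure handles the remaining entries:
\begin{itemize}
\item $j(i\sqrt2)=8000$ for $i\sqrt2$;
\item $j(i\sqrt3)=2\cdot30^3$ for $i\sqrt3$, and via \eqref{CMdoubling} for $i\sqrt3/2$;
\item $j(i\sqrt7)=255^3$ for $i\sqrt7$, and via \eqref{CMdoubling} for $i\sqrt7/2$;
\item $j(2i)=66^3$ for $2i$;
\item $j(\rho)=0$ at $(1+i\sqrt3)/2$, which forces $u=-256$;
\item $j((1+i)/2)=j(i)$, together with the negative sign of $u$;
\item $j$ at $(1+i\sqrt7)/2$, namely $-15^3$, which forces $u=-4096$;
\item class-number-two values for $(1+i\sqrt2)/2$ and $1/2+i$.
\end{itemize}
Each claimed value can be checked by substituting it into the cubic. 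That is an algebraic identity over a small number field. One then confirms it is the root matching $u\approx q^{-1}$, using the numerical size and sign of $q$, and the explicit error bound $|q|\le1/20$ separates the candidate roots.

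\textbf{Main obstacle.} I expect the last three points, $\tfrac12+i\sqrt3$ and $2i\sqrt3$ with $\tfrac12+i$ as a milder case, to be the hardest. These involve discriminants $-48$ and similar, where $j$ lies in a real quadratic or biquadratic field. The cubic for $u$ then has coefficients in $\Q(\sqrt2,\sqrt3)$, and root selection needs fairly sharp numerics.

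For these I would first use \eqref{CMdoubling} to relate $2i\sqrt3$ and $i\sqrt3/2$, and to relate $\tfrac12+i\sqrt3$ and $\tfrac12+i\sqrt3/2$ through $\tau\mapsto-1/(2\tau)$ composed with translations. This reduces the number of independent $j$-values needed to the known singular modulus $j(2i\sqrt3)$. I would then verify the claimed $u$ against that cubic, using the product formula truncated with explicit tail bounds to pick out the unique root in an interval of relative width $O(|q|)$.
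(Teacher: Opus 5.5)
\textbf{Where the proposal breaks.} Your convergence argument via Lemma~\ref{smallnome} matches the paper, and so does your general identification strategy: feed singular moduli into \eqref{jrelation} or \eqref{CMdoubling}, then choose roots by the sign and size of $u=q^{-1}\prod_{n\ge1}(1+q^n)^{-24}$. Your shortcuts for the easier entries are also valid. The relation $j(\tfrac{1+i}{2})=j(i)$ makes $-64$ the unique negative root, and the discriminant $-32$ and $-64$ moduli would work, though they are not needed. The gap is in the reduction you propose for the two points you single out as the main obstacle.

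The Fricke involution does not connect $\tfrac12+i\sqrt3$ with $\tfrac12+\tfrac{i\sqrt3}{2}$. It sends $\tfrac12+i\sqrt3$ to $(-1+2i\sqrt3)/13$, a root of $13\tau^2+2\tau+1$ of discriminant $-48$, whereas $\tfrac12+\tfrac{i\sqrt3}{2}$ has discriminant $-3$. Since $\Gamma_0(2)$ contains the integer translations, preserves discriminants, and is normalized by the Fricke involution, every composition you describe keeps $\tfrac12+i\sqrt3$ at discriminant $-192$ or $-48$. Similarly, neither identity in \eqref{CMdoubling} links $i\sqrt3/2$ to $2i\sqrt3$: the Fricke partner of $i\sqrt3/2$ is $i/\sqrt3$, and doubling it gives $i\sqrt3$. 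As written, you therefore never produce a polynomial satisfied by $u(\tfrac12+i\sqrt3)$. The cubic $(u+256)^3=j(2i\sqrt3)u^2$ is the one for $2i\sqrt3$, and applying \eqref{jrelation} at $\tfrac12+i\sqrt3$ itself would require a class-number-four modulus (discriminant $-192$).

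\textbf{The missing observation and the fix.} For $\tau=\tfrac12+iy$ one has $2\tau=1+2iy$, so $j(2\tau)=j(2iy)$. Put $v=u(iy)$ and $w=u(\tfrac12+iy)$, and apply the second identity in \eqref{CMdoubling} at both points. This gives $(w+16)^3/w=(v+16)^3/v$, that is, $(w-v)\bigl(w^2+(v+48)w-4096/v\bigr)=0$.
\begin{itemize}
\item The quadratic factor has exactly one negative root, and $w<0$ because $q<0$. The sign alone selects the root, so no sharp numerics are needed.
\item For $y=\sqrt3$ one has $v=1024(2+\sqrt3)^3$ and $4096/v=104-60\sqrt3$. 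The negative root is $-13336-9405\sqrt2-7680\sqrt3-5445\sqrt6$, as follows from $(9405\sqrt2+5445\sqrt6)^2-(13336+7680\sqrt3)^2=104-60\sqrt3$.
\item The half-period identity \eqref{CMhalfperiod}, $u(\tau)u(\tau+\tfrac12)=-u(2\tau)$, comes from splitting the product into even and odd factors. It gives $u(2i\sqrt3)=-u(i\sqrt3)\,u(\tfrac12+i\sqrt3)$ directly.
\item Your own route for $2i\sqrt3$ also works once $j(2i\sqrt3)$ is obtained correctly: use $(u+256)^3=j(2i\sqrt3)u^2$ with $j(2i\sqrt3)=(v+16)^3/v$, $v=u(i\sqrt3)$, and select the root by size.
\item The same family also handles $\tfrac12+i$ (take $y=1$, giving $w=-280-198\sqrt2$) and $\tfrac{1+i\sqrt2}{2}$ (take $y=1/\sqrt2$, so $v=64$). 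Only class-number-one moduli are then needed, as in the paper.
\end{itemize}
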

\begin{proof}
The classical CM values needed here are
\begin{gather*}
 j(i)=1728,\quad j(i\sqrt2)=8000,\quad j(i\sqrt3)=54000,\\
 j((1+i\sqrt3)/2)=0,\qquad j((1+i\sqrt7)/2)=-3375;
\end{gather*}
see \cite[\S6.1, p.~71]{Zagier} and \cite{Cox}. Substitution into \eqref{jrelation} gives
\begin{align*}
 (u+256)^3-1728u^2&=(u-512)^2(u+64),\\
 (u+256)^3-8000u^2&=(u-64)(u^2-7168u-262144),\\
 (u+256)^3-54000u^2&=(u+16)(u^2-53248u+1048576),\\
 (u+256)^3+3375u^2&=(u+4096)(u^2+47u+4096).
\end{align*}
For real $-1<q<1$, $q\ne0$, the product
\[
 u(\tau)=q^{-1}\prod_{n\ge1}(1+q^n)^{-24}
\]
has the sign of $q$. When $0<q<1$, it also gives
\begin{equation}\label{CMrootbound}
 u(\tau)\ge q^{-1}\exp\!\left(-\frac{24q}{1-q}\right).
\end{equation}
Thus $u(i)=512$. At $i\sqrt2$ and $i\sqrt3$, the nomes are less than $1/1000$ and $1/10000$, respectively, so \eqref{CMrootbound} selects the large positive quadratic roots. When $j=0$, necessarily $u=-256$. At $(1+i\sqrt7)/2$, the quadratic factor has negative discriminant, so reality gives $u=-4096$. The Fricke identity in \eqref{CMdoubling} now gives $u(i/2)=8$ and, at its fixed point, $u(i/\sqrt2)=64$.

For $\tau=(1+i)/2$, its Fricke image is $\tau-1$. Periodicity and \eqref{CMdoubling} give $u(\tau)^2=4096$; the negative nome selects $u=-64$. Moreover, applying the second identity in \eqref{CMdoubling} at $i$ and $(1+i\sqrt7)/2$ gives
\[
 j(2i)=287496,\qquad j(i\sqrt7)=16581375.
\]
The remaining values follow from the exact factorizations
\begin{align*}
 (u+16)^3-8000u&=(u-64)(u^2+112u-64),\\
 (u+16)^3-54000u&=(u+256)(u^2-208u+16),\\
 (u+16)^3-16581375u&=(u+4096)(u^2-4048u+1),\\
 (u+256)^3-287496u^2&=(u-8)(u^2-286720u-2097152),\\
 (u+256)^3-16581375u^2&=(u+1)(u^2-16580608u+16777216).
\end{align*}
The first identity has exactly one negative root, $-56-40\sqrt2$, which selects $\tau=(1+i\sqrt2)/2$. For the next two identities, the positive nomes at $i\sqrt3/2$ and $i\sqrt7/2$ are less than $1/200$; hence \eqref{CMrootbound} gives $u>100$ and selects the large positive root. At $2i$ and $i\sqrt7$, the nomes are less than $1/100000$, so the same bound gives $u>90000$ and again selects the large positive root. Expanding the powers in the table gives precisely these roots.

Splitting the product into even and odd factors gives
\begin{equation}\label{CMhalfperiod}
 u(\tau)u\left(\tau+\tfrac12\right)=-u(2\tau).
\end{equation}
Indeed, with $q=e^{2\pi i\tau}$,
$\prod_{n\text{ odd}}(1-q^{2n})
 =\prod_{n\ge1}(1+q^{2n})^{-1}$, and the identity follows from
$u(\tau)=q^{-1}\prod_{n\ge1}(1+q^n)^{-24}$.
The values at $i$ and $2i$ therefore give the first additional value.

For $y>0$, put $v=u(iy)>0$ and $w=u(\tfrac12+iy)<0$.
Since $j(2iy+1)=j(2iy)$, equation~\eqref{CMdoubling} gives
\[
 (w+16)^3-\frac{(v+16)^3}{v}w
 =(w-v)\left(w^2+(v+48)w-\frac{4096}{v}\right)=0.
\]
The quadratic factor has exactly one negative root. When $y=\sqrt3$,
$v=1024(2+\sqrt3)^3$, and that root is
\[
 w=-13336-9405\sqrt2-7680\sqrt3-5445\sqrt6.
\]
Here $v+48=2(13336+7680\sqrt3)$, and the identity
\[
 (9405\sqrt2+5445\sqrt6)^2-(13336+7680\sqrt3)^2
 =104-60\sqrt3=\frac{4096}{v}
\]
verifies the quadratic equation. Expanding the second additional expression gives this value of $w$.
Finally, \eqref{CMhalfperiod} and
$(2+\sqrt3)^3(3\sqrt6-5\sqrt2)=3\sqrt6+5\sqrt2$
give the value at $2i\sqrt3$.

Every point displayed in the proposition has imaginary part at least
$1/2$, so $0<|q|\le e^{-\pi}<1/20$.
Lemma~\ref{smallnome} proves all the stated limits with the same explicit
error bound.
\end{proof}

More general CM targets can be specified without radical expressions. If $D<0$ is a quadratic discriminant and $H_D$ is the class polynomial of degree $h$, then every corresponding value of $u$ satisfies
\begin{equation}\label{classpoly}
 u^{2h}H_D\!\left(\frac{(u+256)^3}{u^2}\right)=0.
\end{equation}
Indeed $H_D(j(\tau))=0$ by the classical theory \cite{Cox}, and \eqref{jrelation} gives the assertion. The left side is a monic polynomial in $\Z[u]$ of degree $3h$: its leading term comes from $(u+256)^{3h}$, and all remaining terms have smaller degree. It need not be irreducible. Class polynomials may be computed by complex approximation or Chinese remainder methods \cite{Enge,Sutherland}; exact factorization and root isolation then specify the chosen target. Lemma~\ref{smallnome} applies whenever the selected nome lies in its disk.

\section*{Conclusion}
The root-of-unity sections have exact formal contact with their product
quotient, but their behavior near $q=1$ depends on the section order.
The real-pole theorem, the sharp cubic correction, and the signed CM
asymptotic quantify these different regimes.

The main remaining question is global. Does the local sampling pole remain
simple and real until it reaches $z=-1$, and can every competing pole be
excluded before that crossing? An exact convergence threshold also requires
numerator noncancellation. For odd sections, the zero-free results near the
two real endpoints leave the intervening interval unresolved.

Other radial directions require a separate analysis of the product phases
by residue classes. The positive-radius results establish neither a quantum
modular transformation law nor a natural boundary on the whole unit circle.
For a finite product $\prod_\delta\Phi(q^\delta)^{r_\delta}$,
Lemma~\ref{Gschwartz} shows that the exponential behavior is governed by
the weighted sum $\sum_\delta r_\delta/\delta$.
Positive total exponent alone is insufficient. For example,
\[
 e^{-v}\frac{\Phi(e^{-2v})^3}{\Phi(e^{-v})^2}
 \sim\frac{\sqrt\pi}{2\sqrt v}\exp\!\left(\frac{\pi^2}{12v}\right),
 \qquad v\downarrow0,
\]
since the exponents $r_2=3$, $r_1=-2$ give weighted sum $-1/2$.

\section*{Acknowledgment}
The author thanks Prof. George E. Andrews for his encouraging and
supportive words and for the inspiration he has provided.

\end{document}